\tikzset{Rightarrow/.style={double equal sign distance,>={Implies},->},
triple/.style={-,preaction={draw,Rightarrow}},
quadruple/.style={preaction={draw,Rightarrow,shorten >=0pt},shorten >=1pt,-,double,double
distance=0.2pt}}
\mathchardef\mhyphen="2D
\definecolor{darkred}{rgb}{0.8,0.1,0.1}
\theoremstyle{plain}
\newtheorem{theo}{Theorem}[section]
\newtheorem{lem}[theo]{Lemma}
\newtheorem{propo}[theo]{Proposition}
\newtheorem{cor}[theo]{Corollary}
\theoremstyle{definition}
\newtheorem{defi}[theo]{Definition}
\newenvironment{ex}
{\pushQED{\qed}\exx}
{\popQED\endexx}
\newenvironment{rem}
{\pushQED{\qed}\remm}
{\popQED\endremm}
\numberwithin{equation}{section}
\def\nn{\nonumber}
\def\bbK{\mathbb{K}}
\def\bbZ{\mathbb{Z}}
\def\Hom{\mathrm{Hom}}
\def\hom{\underline{\mathrm{hom}}}
\def\Sym{\mathrm{Sym}}
\def\id{\mathrm{id}}
\def\Id{\mathrm{Id}}
\def\ID{\mathrm{ID}}
\def\dd{\mathrm{d}}
\def\dR{\mathrm{dR}}
\def\dim{\mathrm{dim}}
\def\1{I}
\def\oone{\mathbbm{1}}
\def\Ch{\mathbf{Ch}}
\def\dgMod{\mathbf{dgMod}}
\def\CC{\mathbf{C}}
\def\DD{\mathbf{D}}
\def\EE{\mathbf{E}}
\def\Cat{\mathbf{Cat}}
\def\dgCat{\mathbf{dgCat}}
\def\2Cat{\mathbf{2Cat}}
\def\3Cat{\mathbf{3Cat}}
\def\g{\mathfrak{g}}
\def\C{\mathcal{C}}
\def\D{\mathcal{D}}
\def\colim{\mathrm{colim}}
\def\Pol{\mathrm{Pol}}
\def\CE{\mathrm{CE}}
\newcommand\und[1]{\underline{#1}}
\newcommand{\ip}[2]{\langle #1,#2 \rangle}
\newcommand{\ipbig}[2]{\big\langle #1,#2 \big\rangle}
\newcommand{\xRrightarrow}[2][]{\ext@arrow 0359\Rrightarrowfill@{#1}{#2}}
\newcommand{\Rrightarrowfill@}{\arrowfill@\equiv\equiv\Rrightarrow}
\newcommand{\xLleftarrow}[2][]{\ext@arrow 3095\Lleftarrowfill@{#1}{#2}}
\newcommand{\Lleftarrowfill@}{\arrowfill@\Lleftarrow\equiv\equiv}
\def\sk{\vspace{2mm}}
\let\@fnsymbol\@alph
\title{%
Infinitesimal $2$-braidings from $2$-shifted Poisson structures
}
\author{%
Cameron Kemp$^{a}$, Robert Laugwitz$^{b}$\ and\ Alexander Schenkel$^{c}$\vspace{4mm}\\
{\small School of Mathematical Sciences, University of Nottingham,}\\
{\small University Park, Nottingham NG7 2RD, United Kingdom.}\vspace{4mm}\\
{\small \begin{tabular}{ll}
Email: & ${}^a$~\href{mailto:cameron.kemp@nottingham.ac.uk}{\texttt{cameron.kemp@nottingham.ac.uk}}\\
&${}^b$~\href{mailto:robert.laugwitz@nottingham.ac.uk}{\texttt{robert.laugwitz@nottingham.ac.uk}}\\
& ${}^c$~\href{mailto:alexander.schenkel@nottingham.ac.uk}{\texttt{alexander.schenkel@nottingham.ac.uk}}
\end{tabular}
}
}
\date{February 2025}
\begin{document}

\maketitle

\begin{abstract}
\noindent It is shown that every $2$-shifted Poisson structure on a finitely generated semi-free commutative differential graded algebra $A$ defines a very explicit infinitesimal $2$-braiding on the homotopy $2$-category of the symmetric monoidal dg-category of finitely generated semi-free $A$-dg-modules. This provides a concrete realization, to first order in the deformation parameter $\hbar$, of the abstract deformation quantization results in derived algebraic geometry due to Calaque, Pantev, To{\"e}n, Vaqui{\'e} and Vezzosi. Of particular interest is the case when $A$ is the Chevalley-Eilenberg algebra of a Lie $N$-algebra, where the braided monoidal deformations developed in this paper may be interpreted as candidates for representation categories of `higher quantum groups'.
\end{abstract}
\vspace{-3mm}

\paragraph*{Keywords:} derived algebraic geometry, shifted Poisson structures, Lie $N$-algebras, deformation quantization, braided monoidal $2$-categories
\vspace{-2mm}

\paragraph*{MSC 2020:} 14A30, 17B37, 18N10, 53D55
\vspace{-2mm}

\tableofcontents


\section{Introduction and summary}
\label{sec:intro}
Quantum groups are interesting and important mathematical objects
which enjoy a wide range of applications in representation
theory, topological quantum field theory
and integrability, see e.g.\ \cite{Majid,Kassel,EtingofSchiffmann} for surveys.
There exist two complementary main approaches to the study of quantum groups:
The algebraic approach based on quasi-triangular Hopf algebras
and the categorical approach studying linear braided monoidal categories.
The relationship between these approaches is given by the observation 
that a quasi-triangular Hopf algebra has a linear braided monoidal category
of representations from which it can be recovered by Tannakian reconstruction.
An important class of examples of quantum groups are the quantized 
universal enveloping algebras $U_q(\g)$ which are defined
as quasi-triangular Hopf algebra deformations of the universal enveloping algebra $U(\g)$
of a Lie algebra $\g$. In the case of formal deformations $q=e^\hbar$, 
their linear braided monoidal representation categories $\mathbf{Rep}_\hbar(\g)$ can also be obtained 
directly as braided monoidal deformations of the linear symmetric monoidal representation
category $\mathbf{Rep}(\g)$ of the Lie algebra $\g$, see
e.g.\ \cite{Cartier} and \cite[Chapter XX]{Kassel}.
\sk

Given the importance of quantum groups, it is natural to search 
for a generalization of such structures to higher category theory.
Such `higher quantum groups' are then expected to have applications
in higher-categorical representation theory, higher-dimensional
topological quantum field theory and higher-dimensional integrability.
To the best of our knowledge, such applications have still not yet materialized fully,
but there are recent research programs in this direction, see e.g.\
\cite{Schenkel,Chen} for applications in the 
context of higher gauge theory and higher-dimensional integrability
and also \cite{ChenPhD} for an informative overview.
From a mathematical perspective, there have been various proposals to develop a theory
of `higher quantum groups' based on `higher Lie algebras' such as Lie $2$-algebras, 
reaching from algebraic approaches,
see e.g.\ \cite{Majid2,Zhu,Voronov,Girelli}, to more categorical ones,
see e.g.\ \cite{Joao1,Joao,Joao2}. These attempts are distinct from 
the approach to `higher quantum groups' via `categorification' in the sense 
of \cite{CraneFrenkel,ChuangRouquier,Rouquier,KhovanovLauda} where 
`higher quantum groups' are constructed based on classical Lie algebras.
\sk

In the present paper, we initiate a new approach to `higher quantum groups'
which is rooted in recent developments in derived algebraic geometry,
in particular the theory of shifted symplectic and shifted Poisson structures
 \cite{PTVV,CPTVV,Pridham}. Our inspiration 
comes from Safronov's results in \cite{Safronov} which show that the semi-classical
data associated with quantum groups, namely Lie bialgebra structures
and invariant symmetric tensors, arise naturally as the $1$-shifted and $2$-shifted Poisson structures
on the classifying stack $\mathrm{B}G=[\mathrm{pt}/G]$ of an algebraic group $G$,
or its infinitesimal analogue $\mathrm{B}\g=[\mathrm{pt}/\g]$ where $\g$ denotes 
the Lie algebra of $G$.
\sk

Given an $n$-shifted Poisson structure on a derived stack $X$, for an integer 
$n\geq 1$, the symmetric monoidal dg-category  $\mathbf{Perf}(X)$ 
of perfect modules over $X$ can be quantized using the $\mathbb{E}_n$-monoidal 
deformation quantization results from \cite{CPTVV}. 
In the case where $X = \mathrm{B}\g$, one has an equivalence
of symmetric monoidal dg-categories
$\mathbf{Perf}(\mathrm{B}\g) \simeq \mathbf{dgRep}(\g)$
identifying perfect modules over $\mathrm{B}\g$ with (homotopy-coherent) 
representations of $\g$ on cochain complexes. Hence, every 
$2$-shifted Poisson structure on $\mathrm{B}\g$, which by \cite{Safronov}
is an invariant symmetric tensor in $(\Sym^2\g)^\g$,
defines a braided monoidal deformation $\mathbf{Perf}_\hbar (\mathrm{B}\g)
\simeq \mathbf{dgRep}_\hbar(\g)$ which is expected to generalize
the constructions in \cite{Cartier} and \cite[Chapter XX]{Kassel} 
to representations of $\g$ on cochain complexes. In Corollary~\ref{cor:Lieallorder}
we provide a concrete realization of this braided monoidal deformation to all orders of $\hbar$.
\sk

Our working hypothesis is that the above picture should generalize
to the case where the Lie algebra $\g$ is replaced by a `higher Lie algebra', i.e.\ 
a Lie $N$-algebra $L$ for some $N\geq 2$. Inspired by \cite{Safronov}, we will \textit{define} 
the semi-classical `higher quantum group' data
on $L$ to be given by the $2$-shifted Poisson structures
on the infinitesimal classifying stack $\mathrm{B}L = [\mathrm{pt}/L]$,
and \textit{propose} the braided monoidal deformation $\mathbf{Perf}_\hbar (\mathrm{B}L)
\simeq \mathbf{dgRep}_\hbar(L)$ from \cite{CPTVV} as a model for
the representation category of the `higher quantum group $U_q(L)$' with $q=e^\hbar$.
\sk

Instead of making use of the abstract deformation quantization results from \cite{CPTVV},
we will approach this deformation problem similarly to the case of ordinary Lie algebras 
in \cite{Cartier} and \cite[Chapter XX]{Kassel} by breaking it into two steps:
The first step is to consider only first-order deformations in $\hbar$ of the braiding, 
which will be controlled by a higher-categorical analogue of an infinitesimal braiding.
We show that this step is rather explicit and computationally accessible.
The second step consists of lifting these first-order deformations
to all orders in $\hbar$, which is more difficult and points towards the
necessity to develop a homotopy-coherent generalization of the concept
of Drinfeld associators. Our paper provides a detailed
solution for the first step and an exploratory study of the second step.
The problem of developing explicit 
categorical deformation constructions was also addressed 
in the recent paper \cite{KKMP} by using complementary methods
from factorization homology.
\sk

We will now explain our results by outlining the content of this paper.
In Section \ref{sec:DAG}, we recall some basic aspects of differential graded algebraic geometry
that are needed in our work. In particular, we provide a brief discussion of the crucial concept of an 
$n$-shifted Poisson structure (Definition \ref{def:nshiftedpoisson})
on a finitely generated semi-free commutative differential graded algebra 
(CDGA) $A$. Since we are mainly interested in the Chevalley-Eilenberg algebras
of Lie $N$-algebras, which carry only non-trivial stacky structure but a trivial derived structure,
we can restrict ourselves to the so-called `stacky affine' context
from \cite{PridhamNotes,Pridham,PridhamOutline}. Hence, our focus will be 
on singly-graded CDGAs, which in our applications are always 
concentrated in non-negative cohomological degrees.
The technically more involved concepts 
of the doubly-graded `stacky CDGAs' from \cite{PridhamNotes,Pridham,PridhamOutline}
or the graded mixed CDGAs from \cite{CPTVV} will not be needed in our present work.
\sk

In Section \ref{sec:2braidings}, we study deformations of the symmetric monoidal dg-category 
${}_A\dgMod^{\mathrm{fgsf}}$ of finitely generated semi-free
$A$-dg-modules into a braided monoidal category.
More precisely, we restrict our attention to the simpler problem of deforming 
only the homotopy $2$-category
$$
{}_A\CC \,:=\, \mathsf{Ho}_2\big({}_A\dgMod^{\mathrm{fgsf}}\big)
$$
of this symmetric monoidal dg-category.
The homotopy $2$-category ${}_A\CC$ has the same underlying objects, 
i.e.\ finitely generated semi-free $A$-dg-modules, and its morphisms
are given by the good truncations to cohomological degrees $\{-1,0\}$ of the 
morphism complexes in ${}_A\dgMod^{\mathrm{fgsf}}$, see Appendix \ref{app:Ho2Cat}.
Our motivation behind this truncation of dg-categories
to their homotopy $2$-categories is that the latter are $2$-categorical objects 
(in contrast to $\infty$-categorical ones) whose deformations can be described very 
explicitly in the context of the $3$-category $\dgCat_R^{[-1,0],\mathrm{ps}}$ from Proposition \ref{prop:Chtrunccat}.
Most importantly, in this context every tower of homotopy coherence data is automatically 
truncated, therefore making it accessible with our rather direct computational techniques. 
A further motivation is that our main examples of interest arise from 
Lie $2$-algebras and their representations (see Section \ref{sec:examples}), 
which can be considered naturally in this truncated $2$-categorical context.
We show in Proposition \ref{prop:hexagoninf} that, 
to first order in the deformation parameter $\hbar$, the braided monoidal deformations
of the homotopy $2$-category ${}_A\CC$ are governed by infinitesimal 
$2$-braidings as in Definition \ref{def:inf2braiding}. 
Our concept of infinitesimal $2$-braidings is compatible with the one proposed
by Cirio and Faria Martins \cite{Joao} in the context of categories enriched over $2$-vector spaces,
see Remark \ref{rem:comparisonJoao}.
\sk

Our main results are presented in Subsection \ref{subsec:construction},
where we show that every $2$-shifted Poisson structure on a finitely generated semi-free CDGA
$A$ defines, by a very explicit construction, a $\gamma$-equivariant 
infinitesimal $2$-braiding on ${}_A\CC$, see in particular Theorem \ref{theo:mainresult}. This means that
every $2$-shifted Poisson structure on $A$ defines an explicit first-order
braided monoidal deformation of the homotopy $2$-category of the symmetric monoidal
dg-category ${}_A\dgMod^{\mathrm{fgsf}}$, which realizes concretely 
the abstract deformation results from \cite{CPTVV} to first order in $\hbar$.
\sk

In Subsection \ref{subsec:higherorders}, we explore the behavior
of these first-order deformations at higher orders in the deformation parameter $\hbar$.
Following the conventional theory of infinitesimal braidings from
\cite{Cartier} and \cite[Chapter XX]{Kassel},
we observe in Proposition \ref{propo:allorderdeformation} that
a $\gamma$-equivariant infinitesimal $2$-braiding defines 
an all-order in $\hbar$ braided monoidal deformation of the homotopy $2$-category ${}_A\CC$ 
provided that certain identities which are similar to Drinfeld associators hold true strictly.
We observe in Proposition \ref{propo:Gamma4term} that, in general, these identities
do not hold true strictly, but only up to homotopies which we 
construct explicitly up to order $\hbar^2$. This suggests that lifting our first-order deformation construction
to all orders in $\hbar$ requires a homotopy-coherent generalization
of the concept of Drinfeld associators, involving suitable
hexagonators and pentagonators,  which we however currently 
do not know how to develop. We believe that combining the construction of Drinfeld associators
from parallel transports (see e.g.\ \cite{Bordemann}) with the
categorified Knizhnik-Zamolodchikov connections from \cite{Joao1,Joao2}
will provide a suitable framework to attack this problem in future work.
\sk

In Section \ref{sec:examples}, we apply our deformation quantization construction 
from Section \ref{sec:2braidings} to explicit examples 
associated with Lie $N$-algebras.
Following \cite{Pridham,PridhamOutline}, we describe
the infinitesimal classifying stack $\mathrm{B}L = [\mathrm{pt}/L]$ of a Lie $N$-algebra $L$ 
by its associated Chevalley-Eilenberg algebra $\CE^\bullet(L)$.
For finite-dimensional $L$, this is a finitely generated semi-free CDGA,
hence our techniques from the previous sections apply to this case.
In particular, our main Theorem \ref{theo:mainresult}
shows that every $2$-shifted Poisson structure
on $\CE^\bullet(L)$ defines an explicit first-order braided monoidal deformation
of the homotopy $2$-category 
$$
{}_{\CE^\bullet(L)}\CC\,\simeq\, \mathsf{Ho}_2\big(\mathbf{dgRep}(L)\big)
$$
of the symmetric monoidal dg-category of representations of the Lie $N$-algebra $L$
and their $\infty$-morphisms. In Subsection \ref{subsec:Lie}, we will specialize this result
to the case where $L=\g$ is an ordinary
Lie algebra, which leads to a homotopical generalization of
the traditional constructions in \cite{Cartier} and \cite[Chapter XX]{Kassel},
and in Subsection \ref{subsec:Lie2} to the case where $L=\g_\kappa$ is the string Lie $2$-algebra
obtained from an ordinary Lie algebra $\g$ and a $3$-cocycle $\kappa:\bigwedge^3\g\to\bbK$.
\sk

In the technical Appendix \ref{app:categorical}, we develop and present
a suitable higher-categorical framework in which our deformation
constructions in this paper are carried out. The main concept defined here
is the symmetric monoidal $3$-category $\dgCat_R^{[-1,0],\mathrm{ps}}$ from 
Proposition \ref{prop:Chtrunccat} whose objects
are categories enriched over $2$-term cochain complexes concentrated
in degrees $-1$ and $0$, morphisms are enriched functors,
$2$-morphisms are enriched \textit{pseudo}-natural transformations
and $3$-morphisms are enriched modifications. Working with pseudo-natural transformations provides 
the required flexibility for the deformation constructions of this paper. Indeed, 
the homotopy $2$-categories ${}_A\CC\,:=\,\mathsf{Ho}_2\big({}_A\dgMod^{\mathrm{fgsf}}\big)$
and their braided monoidal deformations from Section \ref{sec:2braidings}
are symmetric or, respectively, braided monoidal category objects in 
the symmetric monoidal $3$-category $\dgCat_R^{[-1,0],\mathrm{ps}}$.


\section{Differential graded algebraic geometry}
\label{sec:DAG}
\subsection{Cochain complexes of modules}
\label{subsec:cochain}
Throughout this work, we fix a field $\bbK$ of characteristic $0$.
Let $R$ be a commutative, associative and unital $\bbK$-algebra.
We denote the category of (not necessarily bounded) cochain complexes 
of $R$-modules by $\Ch_R$. Concretely, an object $V=(V,\dd_V)\in \Ch_R$ is a pair consisting of 
a family $V=\{V^i\}_{i\in\bbZ}$ of $R$-modules labeled by their degrees $i\in\bbZ$
and a family $\dd_V = \{\dd_V^i : V^i \to V^{i+1}\}_{i\in\bbZ}$ of degree-increasing
$R$-linear maps (called the differential) which squares to zero, i.e.\ $\dd_V\,\dd_V =0$. A morphism $f :V\to W$ in $\Ch_R$
is a family $f=\{f^i : V^i\to W^i\}_{i\in\bbZ}$ of degree-preserving $R$-linear maps
which commutes with the differentials, i.e.\ $\dd_W\,f = f\,\dd_V$.
\sk

The category $\Ch_R$ is closed symmetric monoidal with respect to the following structures:
The tensor product $V\otimes_R W\in\Ch_R$ of two cochain complexes $V,W\in\Ch_R$ is defined by
the relative tensor product of $R$-modules
\begin{subequations}
\begin{flalign}
(V\otimes_R W)^i\,:=\,\bigoplus_{j\in\bbZ} \big(V^j \otimes_R W^{i-j}\big)\quad,
\end{flalign}
for all $i\in\bbZ$, and the differential 
\begin{flalign}
\dd_{V\otimes_R W}(v\otimes_R w)\,:=\, \dd_V(v)\otimes_R w + (-1)^{\vert v\vert}\,v\otimes_R \dd_W(w)\quad,
\end{flalign}
\end{subequations}
for all homogeneous $v\in V$ and $w\in W$, where $\vert v\vert\in\bbZ$ denotes the degree of $v$.
The monoidal unit is $R\in \Ch_R$ concentrated in degree $0$ with trivial differential $\dd_R=0$.
The symmetric braiding is defined by the Koszul sign rule
\begin{flalign}
\gamma_{V,W}^{}\,:\, V\otimes_R W~\longrightarrow~W\otimes_R  V~,~~ v\otimes_R w ~\longmapsto~(-1)^{\vert v\vert\,\vert w\vert}\,w\otimes_R v\quad,
\end{flalign}
for all homogeneous $v\in V$ and $w\in W$. The internal hom $\hom_R(V,W)\in\Ch_R$ between 
two cochain complexes $V,W\in\Ch_R$ is defined by the internal hom of $R$-modules
\begin{subequations}
\begin{flalign}
\hom_R(V,W)^i\,:=\, \prod_{j\in\bbZ}\mathrm{hom}_R\big(V^j,W^{j+i}\big)\quad,
\end{flalign}
for all $i\in\bbZ$, and the differential 
\begin{flalign}
\dd_{\hom_R(V,W)}(f)\,:=\,\dd_W\,f - (-1)^{\vert f \vert}\,f\,\dd_V\quad,
\end{flalign}
\end{subequations}
for all homogeneous $f\in \hom_R(V,W)$.
\sk

In the special case where $R=\bbK$ is the base field, we shall simply write
$(\Ch_\bbK,\otimes,\bbK,\hom)$ for the closed symmetric monoidal structure, 
suppressing the subscript ${}_\bbK$ from our notations.
\sk

Given any integer $n\in\bbZ$ and any cochain complex $V\in\Ch_R$, we define the $n$-shifted 
cochain complex $V[n]\in\Ch_R$ by $V[n]^i:= V^{i+n}$, for all $i\in\bbZ$, and $\dd_{V[n]}:= (-1)^n\,\dd_V$.
Using the identification $V[n]\cong R[n]\otimes_R V$, we shall write elements of the $n$-shifted cochain
complex as $s^{-n}\,v\in V[n]$, where $s^{-n}\in R[n]$ denotes the degree $-n$ element represented
by the unit of $R$.

\subsection{Semi-free CDGAs and dg-modules}
\label{subsec:semi-free}
A commutative differential graded algebra over the field $\bbK$ (in short, a CDGA) is a commutative
algebra object $A = (A,\dd_A,\mu_A,\eta_A)$ in the closed symmetric monoidal category $\Ch_\bbK$.
More explicitly, a CDGA is a cochain complex $A=(A,\dd_A)\in\Ch_\bbK$ 
endowed with $\Ch_\bbK$-morphisms $\mu_A : A\otimes A\to A$ (called multiplication)
and $\eta_A : \bbK\to A$ (called unit) which satisfy the usual associativity,
unitality and commutativity axioms internal to $\Ch_\bbK$.
\begin{defi}\label{def:semifreeCDGA}
A CDGA $A= (A,\dd_A,\mu_A,\eta_A)$ is called \textit{semi-free}
if its underlying $\bbZ$-graded commutative algebra $A^\sharp = (A,\mu_A,\eta_A)$ is free, 
i.e.\ 
\begin{flalign}
A^\sharp \,\cong\,\Sym(V^\sharp)
\end{flalign}
is isomorphic to the symmetric algebra of a $\bbZ$-graded vector space $V^\sharp$.
A semi-free CDGA is called \textit{finitely generated} if the total dimension
$\dim(V^\sharp) := \sum_{i\in\bbZ}\dim(V^i)<\infty$ of the $\bbZ$-graded 
vector space $V^\sharp$ is finite.
\end{defi}

Associated with any CDGA $A$ is its dg-category ${}_A\dgMod$ of (left) $A$-dg-modules.
More explicitly, an object $M = (M,\dd_M,\ell_M)$ in this category is
a cochain complex $M=(M,\dd_M)\in\Ch_\bbK$ endowed with a $\Ch_\bbK$-morphism 
$\ell_M : A\otimes M\to M$ (called left action) which satisfies the 
usual left module axioms internal to $\Ch_\bbK$. The cochain complex
of morphisms between two objects $M,N\in {}_A\dgMod$ is defined as the subcomplex
\begin{subequations}\label{eqn:hom_A}
\begin{flalign}
\hom_A(M,N)\,\subseteq \, \hom(M,N)
\end{flalign}
of the internal hom in $\Ch_\bbK$ containing the homogeneous elements
$f\in \hom(M,N)$ which commute with the left actions, i.e.\
\begin{flalign}
f\big(\ell_M(a\otimes m)\big)\,=\,(-1)^{\vert f\vert\,\vert a\vert}~\ell_N\big(a\otimes f(m)\big)\quad,
\end{flalign}
\end{subequations}
for all $a\in A$ and $m\in M$. The dg-category ${}_A\dgMod$ is closed symmetric monoidal
with respect to the relative tensor product
\begin{flalign}\label{eqn:otimesA}
M\otimes_A N\,:=\,\colim\bigg(
\xymatrix@C=8em{
M\otimes N ~&~ \ar@<1ex>[l]^-{\id_M\otimes \ell_N}\ar@<-1ex>[l]_-{(\ell_M\otimes \id_N)\,(\gamma_{M,A}^{}\otimes\id_N)}M\otimes A\otimes N
}
\bigg)\,\in\,{}_A\dgMod\quad,
\end{flalign}
the monoidal unit $A=(A,\dd_A,\mu_A)\in {}_A\dgMod$, the symmetric braiding
\begin{flalign}\label{eqn:gammaA}
\gamma_{M,N}^{}\,:\, M\otimes_A N ~\longrightarrow~N\otimes_A M~,~~m\otimes_A n~\longmapsto~(-1)^{\vert m\vert\,\vert n\vert}~n\otimes_A m\quad,
\end{flalign}
for all homogeneous $m\in M$ and $n\in N$, and the internal hom
\begin{flalign}
\hom_A(M,N)\,\in\,{}_A\dgMod
\end{flalign}
given by the cochain complex \eqref{eqn:hom_A} and the left action 
$A\otimes \hom_A(M,N)\to \hom_A(M,N)\,,~a\otimes f \mapsto a\cdot f$
defined by $(a\cdot f)(m)\,:=\,\ell_N(a\otimes f(m))$, for all $a\in A$,
$f\in \hom_A(M,N)$ and $m\in M$.
\begin{defi}\label{def:semifreedgMod}
An $A$-dg-module $M = (M,\dd_M,\ell_M)$ is called \textit{semi-free}
if its underlying $A^\sharp$-module $M^\sharp = (M,\ell_M)$ is free, i.e.\ 
\begin{flalign}
M^\sharp \,\cong\,A^\sharp\otimes W^\sharp
\end{flalign}
for a $\bbZ$-graded vector space $W^\sharp$. 
A semi-free $A$-dg-module is called \textit{finitely generated} if the total dimension
$\dim(W^\sharp) := \sum_{i\in\bbZ}\dim(W^i)<\infty$ of the $\bbZ$-graded 
vector space $W^\sharp$ is finite. We denote by
\begin{flalign}
{}_A\dgMod^{\mathrm{fgsf}}\,\subseteq \, {}_A\dgMod
\end{flalign}
the full dg-subcategory whose objects are all finitely generated 
semi-free $A$-dg-modules.
\end{defi}

\begin{rem}\label{rem:semifreedgMod}
The closed symmetric monoidal structure on ${}_A\dgMod$
restricts to a closed symmetric monoidal structure
on the full dg-subcategory ${}_A\dgMod^{\mathrm{fgsf}}\subseteq {}_A\dgMod$.
Indeed, the monoidal unit $A\in {}_A\dgMod$ is finitely generated semi-free and,
given any two finitely generated semi-free $A$-dg-modules
$M,N\in {}_A\dgMod^{\mathrm{fgsf}}$ with 
$M^\sharp \cong A^\sharp\otimes W^\sharp$ and $N^\sharp\cong A^\sharp\otimes U^\sharp$, 
we have isomorphisms
\begin{subequations}
\begin{flalign}
\big(M\otimes_A N\big)^\sharp \,\cong \,
A^\sharp\otimes \big(W^\sharp \otimes U^\sharp\big)
\end{flalign}
and
\begin{flalign}
\hom_A(M,N)^\sharp\,\cong\,A^\sharp\otimes\big(U^\sharp\otimes (W^\sharp)^\ast\big)\quad,
\end{flalign}
\end{subequations}
which imply that $M\otimes_A N\in {}_A\dgMod^{\mathrm{fgsf}}$ and
$\hom_A(M,N)\in {}_A\dgMod^{\mathrm{fgsf}}$ are finitely generated semi-free.
Here $(W^\sharp)^\ast$ denotes the dual of the $\bbZ$-graded vector space $W^\sharp$,
which exists since $W^\sharp$ is, by hypothesis, of finite total dimension.
\end{rem}

\subsection{Derivations, K\"ahler differentials and shifted Poisson structures}
\label{subsec:shiftedPoisson}
We recall some basic aspects of the geometry of a finitely generated semi-free CDGA $A$, i.e.\
$A^\sharp \cong \Sym (V^\sharp)$ for some $\bbZ$-graded vector space $V^\sharp$ of finite total dimension.
The dg-module of derivations $\mathsf{T}_{\! A}\in {}_A\dgMod$ of $A$ is defined as the subcomplex
\begin{subequations}
\begin{flalign}
\mathsf{T}_{\! A}\,\subseteq \, \hom(A,A)
\end{flalign}
of the internal hom in $\Ch_\bbK$ containing the homogeneous elements $D\in\hom(A,A)$ which satisfy the
Leibniz rule
\begin{flalign}
D(a\,a^\prime)\,=\, D(a)\,a^\prime + (-1)^{\vert D\vert\, \vert a\vert}~a\,D(a^\prime)\quad,
\end{flalign}
\end{subequations}
for all homogeneous $a,a^\prime\in A$. The $A$-dg-module structure 
$A\otimes \mathsf{T}_{\! A} \to \mathsf{T}_{\! A}\,,~a\otimes D\mapsto a\cdot D$ is given by
$(a\cdot D)(a^\prime):= a\,D(a^\prime)$, for all $a,a^\prime\in A$ and $D\in \mathsf{T}_{\! A}$.
Since $A$ is finitely generated semi-free, each derivation is completely 
specified by its action on the generators $V^\sharp$, which yields an isomorphism
\begin{flalign}
(\mathsf{T}_{\! A})^\sharp\,\cong\,A^\sharp \otimes (V^\sharp)^\ast\quad,
\end{flalign}
where $(V^\sharp)^\ast$ denotes the dual of the $\bbZ$-graded vector space $V^\sharp$.
Hence, $\mathsf{T}_{\! A} \in{}_A\dgMod^{\mathrm{fgsf}}$ is a finitely generated semi-free $A$-dg-module.
\sk

The underlying cochain complex of $\mathsf{T}_{\! A}$ can be endowed with the Lie algebra structure
given by the commutator
\begin{flalign}\label{eqn:Liebracket}
[\,\cdot\,,\,\cdot\,]\,:\,\mathsf{T}_{\! A}\otimes \mathsf{T}_{\! A}  ~\longrightarrow~ \mathsf{T}_{\! A}
~,~~D\otimes D^\prime~\longmapsto~[D,D^\prime]\,:=\,
D\,D^\prime - (-1)^{\vert D\vert\,\vert D^\prime\vert}~D^\prime\,D\quad,
\end{flalign}
for all homogeneous $D,D^\prime\in \mathsf{T}_{\! A}$.
The Lie algebra and $A$-dg-module structures on $\mathsf{T}_{\! A}$ satisfy the compatibility condition
\begin{flalign}
[D,a\cdot D^\prime]\,=\, D(a)\cdot D^\prime + (-1)^{\vert D\vert\,\vert a\vert}~a\cdot[D,D^\prime]\quad,
\end{flalign}
for all homogeneous $D,D^\prime\in \mathsf{T}_{\! A}$ and $a\in A$.
\sk

As a consequence of our hypothesis that $A$ is a finitely generated semi-free CDGA, 
the dg-module of derivations $\mathsf{T}_{\! A} \in {}_A\dgMod^{\mathrm{fgsf}}$ is dualizable. We define
the dg-module of K\"ahler differentials by the internal hom
\begin{subequations}\label{eqn:Kaehlerdifferentials}
\begin{flalign}
\Omega_A\,:=\,\hom_A(\mathsf{T}_{\! A} , A )\,\in\, {}_A\dgMod^{\mathrm{fgsf}}\quad,
\end{flalign}
which is finitely generated semi-free as a consequence of Remark \ref{rem:semifreedgMod} and
\begin{flalign}
(\Omega_A)^\sharp\,\cong\,A^\sharp\otimes (V^\sharp)^{\ast\ast} \,\cong\,A^\sharp\otimes V^\sharp\quad.
\end{flalign}
\end{subequations}
We denote the duality pairings between $\Omega_A$ and $\mathsf{T}_{\! A}$ by
\begin{subequations}\label{eqn:dualitypairing}
\begin{flalign}
\ip{\,\cdot\,}{\,\cdot\,}\,:\,\Omega_A\otimes_A \mathsf{T}_{\! A}~\longrightarrow~A~,~~
\omega\otimes_A D~\longmapsto~\ip{\omega}{D}\,:=\,\omega(D)
\end{flalign}
and 
\begin{flalign}
\ip{\,\cdot\,}{\,\cdot\,}\,:\,\mathsf{T}_{\! A} \otimes_A\Omega_A~\longrightarrow~A~,~~
D\otimes_A \omega~\longmapsto~\ip{D}{\omega}\,:=\,(-1)^{\vert D\vert\,\vert \omega\vert}~\omega(D)\quad,
\end{flalign}
\end{subequations}
for all homogeneous $\omega\in\Omega_A$ and $D\in \mathsf{T}_{\! A}$. The de Rham differential
$\dd_\dR : A\to \Omega_A$ is defined by
\begin{flalign}
\ipbig{D}{\dd_\dR(a)}\,:=\,D(a)\quad,
\end{flalign}
for all $a\in A$ and $D\in \mathsf{T}_{\! A}$. From this definition one directly
checks that $\dd_\dR(a\,a^\prime) = \dd_\dR(a)\cdot a^\prime + a\cdot \dd_{\dR}(a^\prime)$,
for all $a,a^\prime\in A$, and that $\dd_{\Omega_A}\,\dd_{\dR} = \dd_{\dR}\,\dd_{A}$.
\sk

A crucial preliminary concept for the definition of shifted Poisson structures
is that of shifted polyvectors.
\begin{defi}\label{def:poly}
Let $n\in\bbZ$ be an integer. The CDGA of \textit{$n$-shifted polyvectors} on
a finitely generated semi-free CDGA $A$ is defined as the relative symmetric algebra
\begin{flalign}
\Pol(A,n)\,:=\, \Sym_A\big(\mathsf{T}_{\! A}[-n-1]\big)\,=\, \bigoplus_{m\geq 0}\Sym_A^m\big(\mathsf{T}_{\! A}[-n-1]\big)
\end{flalign}
of the $(-n-1)$-shift of the $A$-dg-module of derivations $\mathsf{T}_{\! A}$. The non-negative integer
$m\in\bbZ^{\geq 0}$ in the direct sum decomposition in terms of symmetric powers is called
the \textit{weight} of polyvectors.
\end{defi}

The CDGA of $n$-shifted polyvectors $\Pol(A,n)$ can be endowed with a canonical $\mathbb{P}_{n+2}$-algebra
structure (called Schouten-Nijenhuis bracket)
\begin{flalign}
\{\,\cdot\,,\,\cdot\,\}\,\in\,\hom\Big(\Pol(A,n)\otimes \Pol(A,n),\Pol(A,n)\Big)^{-n-1}
\end{flalign}
which is defined on the generators by
\begin{subequations}\label{eqn:Schoutendef}
\begin{flalign}
\{a,a^\prime\}\,&:=\,0\quad,\\[3pt]
\{s^{n+1}D,a\}\,&:=\,D(a)\quad,\\[3pt]
\{s^{n+1}D,s^{n+1}D^\prime\}\,&:=\,(-1)^{\vert D\vert\,(n+1)}~s^{n+1}[D,D^\prime]\quad,
\end{flalign}
\end{subequations}
for all homogeneous $a,a^\prime\in A$ and $D,D^\prime \in \mathsf{T}_{\! A}$, where we recall that
$D(a)\in A$ denotes the evaluation of the derivation $D$ on $a\in A$ 
and $[D,D^\prime]\in \mathsf{T}_{\! A}$ denotes
the Lie bracket \eqref{eqn:Liebracket} on $ \mathsf{T}_{\! A}$. The bracket
$\{\,\cdot\,,\,\cdot\,\}$ is then extended to $\Pol(A,n)$ by the defining
properties of a $\mathbb{P}_{n+2}$-algebra structure, i.e.\
\begin{subequations}\label{eqn:Schoutenproperties}
\begin{itemize}
\item[(i)] Antisymmetry: For all homogeneous $P,Q\in\Pol(A,n)$,
\begin{flalign}
\{P,Q\}\,=\,-(-1)^{(n+1)}~(-1)^{\vert P\vert\,\vert Q\vert}~\{Q,P\}\quad.
\end{flalign}

\item[(ii)] Derivation property: For all homogeneous $P,Q,R\in\Pol(A,n)$,
\begin{flalign}
\{P,Q\,R\}\,=\,\{P,Q\}\,R + (-1)^{(\vert P\vert -n-1)\, \vert Q\vert}~Q\,\{P,R\}\quad.
\end{flalign}

\item[(iii)] Jacobi identity: For all homogeneous $P,Q,R\in\Pol(A,n)$,
\begin{flalign}
\big\{ P,\{Q,R\} \big\}\,=\,(-1)^{(\vert P\vert -n-1)\,(n+1)}~\big\{\{P,Q\} ,R \big\} + (-1)^{(\vert P\vert -n-1)\,(\vert Q\vert -n-1)}~\big\{Q,\{P,R\}\big\}\quad.
\end{flalign}
\end{itemize}
\end{subequations}
Note that the Schouten-Nijenhuis bracket decreases the polyvector weight by $1$, i.e.\ for $P,Q\in\Pol(A,n)$
of weight $m_P$ and $m_Q$, the weight of $\{P,Q\}\in\Pol(A,n)$ is $m_P + m_Q -1$.
\sk

The definition of $n$-shifted Poisson structures in 
\cite{CPTVV} and \cite{Pridham,PridhamOutline} uses a completion
of the $\mathbb{P}_{n+2}$-algebra $\Pol(A,n)$
in order to avoid bounds on the weights of polyvectors.
The analogous definition in our context is as follows.
\begin{defi}\label{def:polycompleted}
The $\mathbb{P}_{n+2}$-algebra of \textit{completed $n$-shifted polyvectors}
on a finitely generated semi-free CDGA $A$ is defined by
\begin{flalign}
\widehat{\Pol}(A,n)\,:=\, \prod_{m\geq 0} \Sym^m_A\big(\mathsf{T}_{\! A}[-n-1]\big)\quad.
\end{flalign}
A completed $n$-shifted polyvector is thus a formal sum $P = \sum_{m\geq 0} P^{(m)}$
of homogeneous weight components $P^{(m)}\in\Sym^m_A\big(\mathsf{T}_{\! A}[-n-1]\big)$.
The $\mathbb{P}_{n+2}$-algebra structure on $\widehat{\Pol}(A,n)$ is given by
\begin{subequations}
\begin{flalign}
P\,Q\,&:=\, \sum_{m\geq 0}\bigg(\sum_{k+l=m} P^{(k)}\,Q^{(l)}\bigg)\quad,\\[3pt]
\{P,Q\}\,&:=\,\sum_{m\geq 0}\bigg(\sum_{k+l-1=m} \big\{P^{(k)},Q^{(l)}\big\}\bigg)\quad,
\end{flalign}
\end{subequations}
which is well-defined because the weight is bounded from below and hence
the sums in the parentheses are finite.
\end{defi}

We now define the key concept of an $n$-shifted Poisson structure following
\cite[Definition 1.5]{Pridham}, see also \cite[Definition 2.5]{PridhamOutline}.
\begin{defi}\label{def:nshiftedpoisson}
An \textit{$n$-shifted Poisson structure} on a finitely generated semi-free CDGA $A$ 
is a completed $n$-shifted polyvector
\begin{flalign}
\pi\,=\,\sum_{m\geq 2} \pi^{(m)}\,\in \,\widehat{\Pol}(A,n)^{n+2}
\end{flalign}
of degree $n+2$ and weight $\geq 2$ which satisfies the Maurer-Cartan equation
\begin{flalign}
\dd_{\widehat{\Pol}}(\pi) + \tfrac{1}{2}\,\{\pi,\pi\}\,=\,0\quad.
\end{flalign}
\end{defi}
\begin{rem}\label{rem:nshiftedpoisson}
Decomposing the Maurer-Cartan equation into homogeneous weight components
one obtains a tower of conditions
\begin{flalign}
\nn \dd_{\widehat{\Pol}}(\pi^{(2)})\,&=\,0\quad,\\[3pt]
\nn \dd_{\widehat{\Pol}}(\pi^{(3)})+\tfrac{1}{2}\,\{\pi^{(2)},\pi^{(2)}\}\,&=\,0\quad,\\[3pt]
\nn &~\vdots\\[3pt]
\dd_{\widehat{\Pol}}(\pi^{(m)})+\tfrac{1}{2}\,\sum_{k+l-1=m}\{\pi^{(k)},\pi^{(l)}\}\,&=\,0\quad,
\end{flalign}
for all $m\geq 3$. The first condition states that the bivector component
$\pi^{(2)}$ is closed with respect to the differential $\dd_{\widehat{\Pol}}$, i.e.\ it
defines an $(n+2)$-cocycle in $\Sym^2_A\big(\mathsf{T}_{\! A}[-n-1]\big)$.
The second condition provides a homotopical relaxation of the usual condition
that a Poisson bivector has a trivial Schouten-Nijenhuis bracket with itself,
with the trivector $\pi^{(3)}$ playing the role of a homotopy witnessing this condition. 
The higher weight components $\pi^{(m)}$ 
for $m\geq 4$ describe a coherent tower of higher homotopies for this relaxation.
\end{rem}


\section{Infinitesimal \texorpdfstring{$2$}{2}-braidings}
\label{sec:2braidings}
The aim of this section is to study a class of first-order deformations,
into a braided monoidal category, of the symmetric monoidal category ${}_A\dgMod^{\mathrm{fgsf}}$
of finitely generated semi-free dg-modules over a finitely generated
semi-free CDGA $A$ over the field $\bbK$. The precise context
in which our deformation constructions take place is given by the symmetric monoidal
$3$-category $\dgCat_R^{[-1,0],\mathrm{ps}}$ from Proposition \ref{prop:Chtrunccat},
consisting of categories, functors, \textit{pseudo-natural} transformations
and modifications that are suitably enriched over the symmetric monoidal category
$\Ch^{[-1,0]}_{R}$ of $2$-term cochain complexes concentrated in degrees $-1$ and $0$. 
Our starting point is thus given by the homotopy $2$-category
\begin{flalign}\label{eqn:Ho2dgMod}
{}_A\CC\,:=\,\mathsf{Ho}_2\big({}_A\dgMod^{\mathrm{fgsf}}\big)\,\in\, \dgCat^{[-1,0],\mathrm{ps}}_\bbK
\end{flalign}
from Appendix \ref{app:Ho2Cat}, which we endow with the symmetric monoidal
structure that is induced from the standard one on ${}_A\dgMod^{\mathrm{fgsf}}$ given in
\eqref{eqn:otimesA} and \eqref{eqn:gammaA}.
\sk 

The effect of forming the homotopy $2$-category
is that the cochain complex $\hom_A(M,N)\in\Ch_\bbK$ of morphisms 
between any two dg-modules $M,N\in {}_A\dgMod^{\mathrm{fgsf}}$ from \eqref{eqn:hom_A} 
gets replaced by its good truncation ${}_A\CC(M,N):=\tau^{[-1,0]}\big(\hom_A(M,N)\big)\in\Ch_\bbK^{[-1,0]}$
to degrees $-1$ and $0$. Concretely, this means that the homotopy $2$-category 
${}_A\CC = \mathsf{Ho}_2\big({}_A\dgMod^{\mathrm{fgsf}}\big)$  describes only 
degree $0$ morphisms between $A$-dg-modules which commute 
with the differentials and $1$-homotopies (modulo $2$-homotopies) between such morphisms,  
but it does not include information about higher homotopies. Our
motivation behind this truncation of the dg-category ${}_A\dgMod^{\mathrm{fgsf}}$ 
is that the homotopy $2$-category ${}_A\CC = \mathsf{Ho}_2\big({}_A\dgMod^{\mathrm{fgsf}}\big)$ 
is a $2$-categorical object whose deformations can be described very explicitly 
in the context of the $3$-category $\dgCat_R^{[-1,0],\mathrm{ps}}$ from Proposition \ref{prop:Chtrunccat}.
Most importantly, in this context every tower of homotopy coherence data is automatically 
truncated, therefore making it accessible with our more direct computational techniques. 
A further motivation is that our main examples of interest arise from 
Lie $2$-algebras and their representations (see Section \ref{sec:examples}), 
which can be considered naturally in this truncated $2$-categorical context.
Our main result is that every $2$-shifted Poisson structure on the CDGA $A$ as in Definition \ref{def:nshiftedpoisson}
defines a very explicit first-order deformation of the braiding on ${}_A\CC$.

\subsection{Setup and definitions}
\label{subsec:2braidingsdef}
To set up a framework for studying first-order deformations
of the symmetric monoidal category object ${}_A\CC$ from \eqref{eqn:Ho2dgMod} in the symmetric 
monoidal $3$-category $\dgCat_R^{[-1,0],\mathrm{ps}}$ from Proposition \ref{prop:Chtrunccat},
we introduce a deformation parameter $\hbar$ which squares to zero $\hbar^2=0$.
Performing a change of base along the 
commutative $\bbK$-algebra morphism $\bbK\to \bbK[\hbar]/(\hbar^2)$ defines
an object
\begin{flalign}
{}_A\CC^\hbar\,:=\, {}_A\CC\otimes\bbK[\hbar]/(\hbar^2)\,\in\, \dgCat_{\bbK[\hbar]/(\hbar^2)}^{[-1,0],\mathrm{ps}}\quad,
\end{flalign}
which we endow with the symmetric monoidal structure that is induced from the one on ${}_A\CC$.
More concretely, ${}_A\CC^\hbar$ is the $\Ch^{[-1,0]}_{\bbK[\hbar]/(\hbar^2)}$-enriched
category with the same objects as ${}_A\CC$, i.e.\  finitely generated
semi-free $A$-dg-modules, and whose morphisms 
are given by ${}_A\CC^\hbar(M,N) = {}_A\CC(M,N)\otimes\bbK[\hbar]/(\hbar^2)\in\Ch^{[-1,0]}_{\bbK[\hbar]/(\hbar^2)}$,
for all $M,N\in {}_A\CC^\hbar$. Following the notation from Appendix \ref{app:ChCat},
we shall denote the symmetric braiding on ${}_A\CC^\hbar$ by
\begin{equation}
\begin{tikzcd}
	{{}_A\CC^\hbar\boxtimes {}_A\CC^\hbar} && {{}_A\CC^\hbar} \\
	& {{}_A\CC^\hbar\boxtimes {}_A\CC^\hbar}
	\arrow[""{name=0, anchor=center, inner sep=0}, "\otimes_A", from=1-1, to=1-3]
	\arrow["{\tau}"', from=1-1, to=2-2]
	\arrow["\otimes_A"', from=2-2, to=1-3]
	\arrow["\gamma", shorten <=6pt, shorten >=3pt, Rightarrow, from=0, to=2-2]
\end{tikzcd}\quad,
\end{equation}
where $\tau$ is an abbreviation for the component $\tau_{{}_A\CC^\hbar,{}_A\CC^\hbar}$
of the symmetric braiding in \eqref{eqn:tauChCat} of the ambient $3$-category 
$\dgCat_{\bbK[\hbar]/(\hbar^2)}^{[-1,0],\mathrm{ps}}$. 
When written in terms of the $3$-categorical composition operations
from Appendix \ref{app:ChCat},
the symmetric braiding reads as $\gamma: \otimes_A\Rightarrow \otimes_A \ast\tau$.
Note that, by construction, this is a \textit{strict} 
$\Ch^{[-1,0]}_{\bbK[\hbar]/(\hbar^2)}$-enriched natural transformation.
\sk

In the context of (semi-strict) braided monoidal category
objects in $\dgCat_{\bbK[\hbar]/(\hbar^2)}^{[-1,0],\mathrm{ps}}$
from Definition \ref{def:2TermBraidedCat}, we would like to study deformations of
the braiding $\gamma: \otimes_A\Rightarrow \otimes_A \ast\tau$ of ${}_A\CC^\hbar$
which keep all the other structures (i.e.\ the tensor product, monoidal unit, associator and unitors) 
fixed. The general ansatz for a first-order deformation of the 
braiding $\gamma: \otimes_A\Rightarrow \otimes_A \ast\tau$ of ${}_A\CC^\hbar$
is obtained by considering a $\Ch^{[-1,0]}_{\bbK[\hbar]/(\hbar^2)}$-enriched \textit{pseudo-natural} 
transformation\footnote{We would like to emphasize that pseudo-naturality, in contrast to strict naturality, 
is crucial here in order to accommodate our deformation quantization constructions
in Subsection \ref{subsec:construction} below.} of the form
\begin{flalign}\label{eqn:gammahbar}
\gamma^\hbar\,:=\, \gamma\circ \big(\Id + \tfrac{\hbar}{2}\, t\big)\,:\,
\otimes_A~\Longrightarrow ~\otimes_A \ast\tau\,:\,{}_A\CC^\hbar\boxtimes{}_A\CC^\hbar~\longrightarrow~{}_A\CC^\hbar
\end{flalign}
which satisfies the two hexagon identities
\begin{subequations}\label{eqn:hexagon}
\begin{flalign}\label{eqn:hexagon1}
\begin{gathered}
\resizebox{.9\hsize}{!}{$
\xymatrix@C=2em@R=2em{
& \otimes_A\ast (\id\boxtimes \otimes_A)\ar@{=>}[r]^-{\gamma^\hbar\ast\Id} &  \otimes_A\ast (\otimes_A\boxtimes \id) \ast\tau_{23}\ast\tau_{12}\ar@{=>}[dr]^-{~~~~~\alpha\ast\Id\ast\Id}&\\
\otimes_A\ast (\otimes_A\boxtimes\id) \ar@{=>}[ru]^-{\alpha~}
\ar@{=>}[dr]_-{\Id\ast (\gamma^\hbar\boxtimes \Id)~~~~~}&& &\otimes_A\ast (\id\boxtimes\otimes_A) \ast\tau_{23}\ast\tau_{12}\\
&\otimes_A\ast (\otimes_A\boxtimes\id) \ast\tau_{12} \ar@{=>}[r]_-{\alpha\ast\Id}& \otimes_A\ast (\id\boxtimes \otimes_A) \ast\tau_{12} \ar@{=>}[ru]_-{~~~~~\Id\ast (\Id\boxtimes\gamma^\hbar)\ast\Id}&
}$}
\end{gathered}
\end{flalign}
and
\begin{flalign}\label{eqn:hexagon2}
\begin{gathered}
\resizebox{.9\hsize}{!}{$
\xymatrix@C=2em@R=2em{
&\otimes_A\ast (\otimes_A\boxtimes\id)\ar@{=>}[r]^-{\gamma^\hbar\ast\Id} &\otimes_A\ast (\id\boxtimes \otimes_A)\ast \tau_{12}\ast\tau_{23} \ar@{=>}[dr]^-{~~~~~\alpha^{-1}\ast\Id\ast\Id}&\\
\otimes_A\ast (\id\boxtimes\otimes_A) \ar@{=>}[ru]^-{\alpha^{-1}~}
\ar@{=>}[dr]_-{\Id\ast (\Id\boxtimes \gamma^\hbar)~~~~~}&&& \otimes_A\ast (\otimes_A\boxtimes \id)\ast \tau_{12}\ast\tau_{23}\quad ,\\
&\otimes_A\ast (\id\boxtimes\otimes_A) \ast\tau_{23} \ar@{=>}[r]_-{\alpha^{-1}\ast\Id}& \otimes_A\ast (\otimes_A\boxtimes \id) \ast\tau_{23} \ar@{=>}[ru]_-{~~~~~\Id\ast (\gamma^\hbar\boxtimes\Id)\ast\Id}&
}$}
\end{gathered}
\end{flalign}
\end{subequations}
where $\alpha$ denotes the associator and we use 
the typical short-hand notations $\tau_{12}:= \tau\boxtimes\id$ and 
$\tau_{23}:=\id\boxtimes\tau$. Note that in the top horizontal
arrows of these diagrams we applied the 
properties \eqref{eqn:tauChCat-Hexs} and \eqref{eqn:tauChCat-natural}
of the symmetric braiding $\tau$.
The composition of the  $\Ch^{[-1,0]}_{\bbK[\hbar]/(\hbar^2)}$-enriched 
pseudo-natural transformations along the hexagons \eqref{eqn:hexagon} is given by the 
$\circ$-composition from Appendix \ref{app:ChCat} and these diagrams are required to
commute strictly, i.e.\ up to equality of enriched pseudo-natural transformations. 
Strictness of the hexagon identities turns out to be sufficient
for our first-order deformations in Subsection \ref{subsec:construction},
but we will illustrate in Subsection \ref{subsec:higherorders} that higher-order
deformations require in general non-trivial 
hexagonator modifications for the diagrams \eqref{eqn:hexagon}.
\sk

Leveraging the fact that $\hbar^2=0$, the hexagon identities are equivalent
to two linear relations for the deformation $t$, which can be regarded as 
the datum of a $\Ch_\bbK^{[-1,0]}$-enriched pseudo-natural
transformation (without change of base along $\bbK\to \bbK[\hbar]/(\hbar^2)$)
of the form $t : \otimes_A \Rightarrow \otimes_A$ for the relative tensor
product on ${}_A\CC$. 
\begin{propo}\label{prop:hexagoninf}
The first hexagon identity \eqref{eqn:hexagon1} for the deformed braiding \eqref{eqn:gammahbar} 
is equivalent to the identity
\begin{subequations}\label{eqn:hexagoninf}
\begin{multline}\label{eqn:hexagon1inf}
\alpha^{-1}\circ(t\ast\Id)\circ\alpha\,=\,\Id\ast(t\boxtimes\Id) \\
+ \big(\Id\ast(\gamma\boxtimes\Id)^{-1}\big)\circ
\big(\big(\alpha^{-1}\circ \big(\Id\ast(\Id\boxtimes t)\big)\circ\alpha\big) \ast \Id\big)\circ \big(\Id\ast(\gamma\boxtimes\Id)\big)
\end{multline}
of $\Ch_\bbK^{[-1,0]}$-enriched 
pseudo-natural transformations 
$\otimes_A\ast (\otimes_A\boxtimes\id)\Rightarrow\otimes_A\ast (\otimes_A\boxtimes\id)$.
The second hexagon identity \eqref{eqn:hexagon2} for the deformed braiding \eqref{eqn:gammahbar} 
is equivalent to the identity
\begin{multline}\label{eqn:hexagon2inf}
\alpha\circ(t\ast\Id)\circ\alpha^{-1}\,=\,\Id\ast(\Id\boxtimes t) \\
+ \big(\Id\ast(\Id\boxtimes\gamma)^{-1}\big)\circ
\big(\big(\alpha\circ \big(\Id\ast(t\boxtimes \Id)\big)\circ\alpha^{-1}\big) \ast \Id\big)\circ \big(\Id\ast(\Id\boxtimes\gamma)\big)
\end{multline}
\end{subequations}
of $\Ch_\bbK^{[-1,0]}$-enriched 
pseudo-natural transformations 
$\otimes_A\ast (\id\boxtimes\otimes_A)\Rightarrow\otimes_A\ast (\id\boxtimes\otimes_A)$.
\end{propo}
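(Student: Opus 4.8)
The plan is to unwind the hexagon identities \eqref{eqn:hexagon} by a direct order-by-order computation in $\hbar$, using the crucial fact that $\hbar^2=0$. First I would substitute the ansatz $\gamma^\hbar = \gamma\circ(\Id + \tfrac{\hbar}{2}\,t)$ from \eqref{eqn:gammahbar} into the first hexagon diagram \eqref{eqn:hexagon1}, expand each of the composite $\Ch^{[-1,0]}_{\bbK[\hbar]/(\hbar^2)}$-enriched pseudo-natural transformations along the two paths of the hexagon using the $\circ$-composition from Appendix \ref{app:ChCat}, and collect terms by powers of $\hbar$. At order $\hbar^0$ the identity must reduce to the ordinary hexagon identity for the undeformed symmetric braiding $\gamma$, which holds by hypothesis since ${}_A\CC$ is symmetric monoidal; hence the content is entirely at order $\hbar^1$.

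The key algebraic step is then to factor out $\gamma$ (and, where needed, the relevant instances of $\alpha$) from the order-$\hbar$ part. Concretely, along the top path the factor $\gamma^\hbar\ast\Id$ contributes $\gamma\ast\Id$ composed with $(\Id + \tfrac{\hbar}{2}\,t)\ast\Id$, and one uses the strict naturality of $\gamma$ together with the hexagon/naturality properties \eqref{eqn:tauChCat-Hexs} and \eqref{eqn:tauChCat-natural} of the ambient symmetric braiding $\tau$ (already invoked in the statement for the top horizontal arrows) to move the $\tau$-whiskerings past the deformation. Along the bottom path the two factors $\Id\ast(\gamma^\hbar\boxtimes\Id)$ and $\Id\ast(\Id\boxtimes\gamma^\hbar)$ each split into an undeformed part and an $\hbar$-part, the latter being precisely the terms $\Id\ast(t\boxtimes\Id)$ and $\Id\ast(\Id\boxtimes t)$ appearing on the right-hand side of \eqref{eqn:hexagon1inf}; the conjugation by $\Id\ast(\gamma\boxtimes\Id)^{\pm 1}$ and the pre/post-composition by $\alpha^{\pm 1}$ on the right-hand side arise exactly from the positions of these factors relative to the associators and the braiding $\gamma$ in the hexagon. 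After cancelling the common undeformed factor, dividing by the scalar $\tfrac{\hbar}{2}$ (legitimate since $\bbK$ has characteristic $0$), and rearranging the associators so that the $t\ast\Id$ term on the left is conjugated into the source $\otimes_A\ast(\otimes_A\boxtimes\id)$, one obtains \eqref{eqn:hexagon1inf}; the converse direction is immediate by running the computation backwards. The second hexagon identity \eqref{eqn:hexagon2inf} is obtained from \eqref{eqn:hexagon2} by the completely analogous computation, with the roles of the two tensor-factor slots interchanged and $\alpha$ replaced by $\alpha^{-1}$ throughout, which accounts for the evident symmetry between \eqref{eqn:hexagon1inf} and \eqref{eqn:hexagon2inf}.

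The main obstacle is bookkeeping rather than conceptual: one must be careful that all the composites are formed in the $3$-category $\dgCat^{[-1,0],\mathrm{ps}}_{\bbK[\hbar]/(\hbar^2)}$, so that $t$ is genuinely a \emph{pseudo}-natural transformation (not a strict one), and the order-$\hbar$ expansion must track the $2$-cell coherence data of these pseudo-natural transformations together with the strict equalities imposed by the hexagons. Verifying that no cross-terms of the form $t$-against-$t$ survive is automatic from $\hbar^2=0$, and verifying that the undeformed pieces assemble into the ordinary hexagon requires only the symmetric monoidal axioms of ${}_A\CC$ together with the standard identity $\gamma^{-1}=\gamma$ (equivalently $\gamma\ast(\gamma\boxtimes\Id)\cdots$ relations) which hold in any symmetric monoidal category object. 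I would organize the write-up by treating \eqref{eqn:hexagon1} in detail and then indicating that \eqref{eqn:hexagon2} follows \emph{mutatis mutandis}, so that the routine but lengthy diagram manipulations need only be carried out once.
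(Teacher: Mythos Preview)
Your overall strategy---expand in $\hbar$, use $\hbar^2=0$ to isolate the linear term, and recognize the $\hbar^0$ part as the undeformed hexagon---is correct and is exactly what the paper does. However, you gloss over the one genuinely non-trivial point, which the paper singles out as ``the key observation.''

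When you write that ``$\gamma^\hbar\ast\Id$ contributes $\gamma\ast\Id$ composed with $(\Id+\tfrac{\hbar}{2}t)\ast\Id$,'' you are invoking the interchange law
\[
(\gamma\circ t)\ast(\Id\circ\Id)\,=\,(\gamma\ast\Id)\circ(t\ast\Id)\quad.
\]
But in the ambient $3$-category $\dgCat_R^{[-1,0],\mathrm{ps}}$ the composition $\ast$ is only a \emph{pseudo}-functor (see Appendix~\ref{app:ChCat}), so this interchange law does not hold strictly in general: it holds only up to the coherence modification $\ast_{(\eta',\eta),(\zeta',\zeta)}$ of \eqref{eqn:compositioncoherences}. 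The substance of the proof is to check that in every instance arising from the hexagons these coherences vanish. The paper does this by two observations: (i) the coherence is trivial when the outer transformation $\zeta'$ is strictly natural (which covers all appearances of $\gamma$, $\alpha$, and $\Id$), and (ii) even when $t$ appears, the specific coherence needed reduces to $t_{(M,N\otimes_A L),(M,N\otimes_A L)}(\id)=0$, which is property~(2) of Definition~\ref{def:pseudonatural}. Without these checks the splitting you perform is unjustified, and this is precisely where the pseudo-naturality of $t$ (which you correctly flag as the ``main obstacle'') actually bites. Your write-up should make this explicit rather than folding it into ``bookkeeping.''
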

\begin{proof}
The proof follows directly by computing and simplifying the order $\hbar$ 
terms of the hexagon identities. The key observation is 
that the pseudo-functor coherences \eqref{eqn:compositioncoherences}
which describe the interchange law between the $\ast$ and $\circ$ compositions
are trivial in all instances appearing in this calculation. Let us illustrate this
through an example: The order $\hbar$ term corresponding to the upper path of \eqref{eqn:hexagon1}
involves the pseudo-natural transformation $(\gamma\circ t)\ast\Id$, which arises
from a particular composition of the pasting diagram
\begin{equation}
\begin{tikzcd}
	& {} && {} \\
	{{}_A\mathbf{C}\boxtimes{}_A\mathbf{C}\boxtimes{}_A\mathbf{C}} && {{}_A\mathbf{C}\boxtimes{}_A\mathbf{C}} && {{}_A\mathbf{C}} \\
	& {} && {}
	\arrow["{\mathrm{id}\boxtimes\otimes_A}", curve={height=-30pt}, from=2-1, to=2-3]
	\arrow["{\mathrm{id}\boxtimes\otimes_A}"', curve={height=30pt}, from=2-1, to=2-3]
	\arrow[""{name=0, anchor=center, inner sep=0}, "{\mathrm{id}\boxtimes\otimes_A}"{description}, from=2-1, to=2-3]
	\arrow["{\otimes_A}", curve={height=-30pt}, from=2-3, to=2-5]
	\arrow["{\otimes_A\ast\tau}"', curve={height=30pt}, from=2-3, to=2-5]
	\arrow[""{name=1, anchor=center, inner sep=0}, "{\otimes_A}"{description}, from=2-3, to=2-5]
	\arrow["{\mathrm{Id}\,}"', shorten <=7pt, shorten >=7pt, Rightarrow, from=1-2, to=0]
	\arrow["t\,"', shorten <=7pt, shorten >=7pt, Rightarrow, from=1-4, to=1]
	\arrow["{\mathrm{Id}\,}"', shorten <=7pt, shorten >=7pt, Rightarrow, from=0, to=3-2]
	\arrow["\gamma\,"', shorten <=7pt, shorten >=7pt, Rightarrow, from=1, to=3-4]
\end{tikzcd}\qquad.
\end{equation}
The pseudo-functor coherences \eqref{eqn:compositioncoherences} for this pasting diagram are trivial
because of the identity $t_{(M,N\otimes_A L),(M, N\otimes_A L)}\big(\id_{(M,N\otimes_A L)}\big) =0$,
for all $(M,N,L)\in {}_A\mathbf{C}\boxtimes{}_A\mathbf{C}\boxtimes{}_A\mathbf{C}$,
given by property (2) of the Definition \ref{def:pseudonatural}
of $\Ch_\bbK^{[-1,0]}$-enriched pseudo-natural transformations.
It then follows that 
\begin{flalign}
(\gamma\circ t)\ast\Id\,=\,(\gamma\circ t)\ast(\Id\circ\Id) \,=\,(\gamma\ast\Id)\circ (t\ast\Id) \quad,
\end{flalign}
which is the key step to identify the left-hand side of \eqref{eqn:hexagon1inf}. 
In addition to arguments of this form, one also has to use that 
the pseudo-functor coherences \eqref{eqn:compositioncoherences} are trivial
whenever $\zeta^\prime$ is strictly natural, i.e.\ $\zeta^\prime_{d,d^\prime}=0$ for all $d,d^\prime\in\DD$.
\end{proof}

This result justifies the following 
\begin{defi}\label{def:inf2braiding}
A (semi-strict) \textit{infinitesimal $2$-braiding} on the symmetric monoidal category object
${}_A\CC=\mathsf{Ho}_2\big({}_A\dgMod^{\mathrm{fgsf}}\big)$ in the symmetric monoidal $3$-category
$\dgCat^{[-1,0],\mathrm{ps}}_\bbK$ from Proposition \ref{prop:Chtrunccat}
is a $\Ch_\bbK^{[-1,0]}$-enriched pseudo-natural transformation 
$t: \otimes_A\Rightarrow\otimes_A : {}_A\CC\boxtimes{}_A\CC\rightarrow {}_A\CC$ which satisfies 
the two infinitesimal hexagon identities in \eqref{eqn:hexagoninf}.
\end{defi}

\begin{rem}\label{rem:inf2braiding=>deformation}
As a consequence of Proposition \ref{prop:hexagoninf}, the datum of an infinitesimal
$2$-braiding $t$ is equivalent to a first-order deformation
$\gamma^\hbar = \gamma\circ (\Id +\tfrac{\hbar}{2} \,t) : \otimes_A\Rightarrow\otimes_A\ast\tau$ of the symmetric braiding
on ${}_A\CC$ to a braiding on ${}_A\CC^\hbar = {}_A\CC\otimes\bbK[\hbar]/(\hbar^2)$.
Note that the deformed braiding is not necessarily symmetric because
\begin{flalign}
(\gamma^\hbar\ast\Id)\circ\gamma^\hbar \,=\,\Id + \tfrac{\hbar}{2}\,\Big( t + (\gamma\ast\Id)\circ (t\ast \Id)\circ \gamma\Big) 
\,:\, \otimes_A~\Longrightarrow ~\otimes_A
\end{flalign}
is in general different from $\Id$.
\end{rem}

\begin{rem}\label{rem:infbraidingcomponents}
Using the explicit composition formulas from Appendix \ref{app:ChCat}, we can 
express the infinitesimal hexagon identities \eqref{eqn:hexagoninf}
in terms of the components of the $\Ch_\bbK^{[-1,0]}$-enriched pseudo-natural transformations involved.
(Recall also Definition \ref{def:pseudonatural}.) In this calculation,
various simplifications arise from the fact that all transformations but $t$
are strictly natural, i.e.\ their homotopy data given by the double-indexed components
\eqref{eqn:doubleindexedcomponents} is trivial. To ease notations, we suppress 
the components of the associator $\alpha$ in what follows. One then finds that the first
infinitesimal hexagon identity \eqref{eqn:hexagon1inf} is equivalent to
\begin{subequations}\label{eqn:hexagon1infexplicit}
\begin{flalign}
t_{M,N\otimes_A L}\,=\, t_{M,N}\otimes_A \id_L +  
\big(\gamma_{N,M}\otimes_A\id_L\big)~\big(\id_N\otimes_A t_{M,L}\big)~\big(\gamma_{M,N}\otimes_A\id_L\big)
\quad,
\end{flalign}
for all $M,N,L\in{}_A\CC$, and
\begin{multline}
t_{(M,N\otimes_A L),(M^\prime,N^\prime\otimes_A L^\prime)}\big(h\widetilde{\otimes}(k\otimes_A l)\big)\,=\,
t_{(M,N),(M^\prime,N^\prime)}\big(h\widetilde{\otimes}k\big)\otimes_A l \\
+\big(\gamma_{N^\prime,M^\prime}\otimes_A\id_{L^\prime}\big)~
\big(k \otimes_A t_{(M,L),(M^\prime,L^\prime)}\big(h\widetilde{\otimes} l\big)\big)
~\big(\gamma_{M,N}\otimes_A\id_L\big)
\quad,
\end{multline}
\end{subequations}
for all $M,N,L,M^\prime,N^\prime,L^\prime\in{}_A\CC$, $h\in {}_A\CC(M,M^\prime)^0$, $k\in{}_A\CC(N,N^\prime)^0$
and $l\in {}_A\CC(L,L^\prime)^0$.
The second infinitesimal hexagon identity \eqref{eqn:hexagon2inf} is equivalent to
\begin{subequations}\label{eqn:hexagon2infexplicit}
\begin{flalign}
t_{M\otimes_A N,L}\,=\, \id_M\otimes_A t_{N,L} + 
\big(\id_M\otimes_A\gamma_{L,N}\big)~\big(t_{M,L}\otimes_A\id_N\big)
~\big(\id_M\otimes_A\gamma_{N,L}\big)
\quad,
\end{flalign}
for all $M,N,L\in{}_A\CC$, and
\begin{multline}
t_{(M\otimes_A N,L),(M^\prime\otimes_A N^\prime,L^\prime)}\big((h\otimes_A k)\widetilde{\otimes}l\big)\,=\,
h\otimes_A t_{(N,L),(N^\prime,L^\prime)}\big(k\widetilde{\otimes}l\big) \\
+\big(\id_{M^\prime}\otimes_A\gamma_{L^\prime,N^\prime}\big)~
\big(t_{(M,L),(M^\prime,L^\prime)}\big(h\widetilde{\otimes}l\big)\otimes_A k\big)
~\big(\id_M\otimes_A\gamma_{N,L}\big)
\quad,
\end{multline}
\end{subequations}
for all $M,N,L,M^\prime,N^\prime,L^\prime\in{}_A\CC$, $h\in {}_A\CC(M,M^\prime)^0$, $k\in{}_A\CC(N,N^\prime)^0$
and $l\in {}_A\CC(L,L^\prime)^0$.
\end{rem}

\begin{rem}\label{rem:comparisonJoao}
Our Definition \ref{def:inf2braiding} of an infinitesimal $2$-braiding
is compatible with the one proposed in \cite[Definition 14]{Joao} in the context
of categories enriched over $2$-vector spaces,
in contrast to our $\Ch_\bbK^{[-1,0]}$-enriched categories from Appendix \ref{app:categorical}.
Indeed, the datum $(r,T)$ of an infinitesimal $2$-braiding in \cite[Definition 14]{Joao} 
consists of a family of $1$-morphisms $r_{M,N}$ and a family of $2$-morphisms $T_{h,k}$,
which in our context correspond to the single-indexed components
$t_{M,N}$ and the double-indexed components $t_{(M,N),(M^\prime,N^\prime)}(h\widetilde{\otimes}k)$
of the pseudo-natural transformation $t : \otimes_A\Rightarrow\otimes_A
: {}_A\CC\boxtimes{}_A\CC\rightarrow {}_A\CC$, see also Definition \ref{def:pseudonatural}.
Our $3$-categorical calculus from Proposition \ref{prop:Chtrunccat} allows us to
treat both of these components simultaneously by working with the various composition
operations of pseudo-natural transformations.
\end{rem}

Our examples of infinitesimal $2$-braidings in Subsection \ref{subsec:construction}
will satisfy a useful additional property that simplifies their analysis.
\begin{defi}\label{def:inf2braidinggammaequivariant}
An infinitesimal $2$-braiding $t: \otimes_A\Rightarrow\otimes_A$ is 
called \textit{$\gamma$-equivariant} if it satisfies the identity
\begin{flalign}\label{eqn:inf2braidinggammaequivariant}
\gamma \circ t\,=\,(t\ast \Id)\circ \gamma
\end{flalign}
of $\Ch_\bbK^{[-1,0]}$-enriched pseudo-natural transformations $\otimes_A\Rightarrow\otimes_A\ast\tau$.
\end{defi}

\begin{rem}
It is important to emphasize that $\gamma$-equivariance does \textit{not} imply
that the associated deformed braiding $\gamma^\hbar$ is symmetric. Indeed,
recalling the calculation in Remark \ref{rem:inf2braiding=>deformation}, we find that
\begin{flalign}
(\gamma^\hbar\ast\Id)\circ\gamma^\hbar\,=\,\Id + \hbar\, t\quad,
\end{flalign}
for every $\gamma$-equivariant infinitesimal $2$-braiding $t$.
\end{rem}

\begin{propo}\label{prop:equivariancehexagons}
Let $t: \otimes_A\Rightarrow\otimes_A$ be any $\Ch_\bbK^{[-1,0]}$-enriched pseudo-natural transformation
which satisfies the $\gamma$-equivariance property \eqref{eqn:inf2braidinggammaequivariant}.
Then the two infinitesimal hexagon identities \eqref{eqn:hexagoninf} for $t$ are equivalent.
\end{propo}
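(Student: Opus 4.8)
The plan is to show that the $\gamma$-equivariance property \eqref{eqn:inf2braidinggammaequivariant} allows one to transform the first infinitesimal hexagon identity \eqref{eqn:hexagon1inf} into the second one \eqref{eqn:hexagon2inf} by conjugating with the symmetric braiding $\gamma$ and using the various coherence axioms (naturality of $\gamma$, the hexagon/compatibility axioms for $\tau$ and $\alpha$) that are available in the ambient symmetric monoidal $3$-category $\dgCat^{[-1,0],\mathrm{ps}}_\bbK$. First I would rewrite both hexagon identities purely in terms of $\ast$- and $\circ$-compositions of pseudo-natural transformations, so that the manipulations become formal. The key structural input, beyond $\gamma$-equivariance, is that all transformations appearing except $t$ (namely $\gamma$, $\alpha$ and their inverses, and the identities $\Id$) are \emph{strictly} natural, so the interchange law between $\ast$ and $\circ$ holds without coherence corrections in every instance that arises — exactly as in the proof of Proposition \ref{prop:hexagoninf}.

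The main step is the following. Starting from \eqref{eqn:hexagon1inf}, I would apply $\gamma$ on the appropriate composite $1$-morphism, i.e. conjugate the whole identity by $\gamma$ (more precisely by $\Id\ast(\gamma\boxtimes\Id)$ on one side and the corresponding whiskering on the other), using $\gamma$-equivariance $\gamma\circ t=(t\ast\Id)\circ\gamma$ to move $t$ past $\gamma$ wherever it occurs, and using strict naturality of $\gamma$ to commute $\gamma$ past the associators. Because $\gamma^2=\Id$ for the \emph{strict} symmetric braiding $\gamma$ on ${}_A\CC$ (it is induced from the Koszul-sign braiding \eqref{eqn:gammaA}, which is an involution), conjugation by $\gamma$ is its own inverse; this is what lets one go back and forth between the two hexagons rather than only one way. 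The net effect of this conjugation is to interchange the roles of the ``left tensor factor'' and the ``right tensor factor'', which is precisely the passage from \eqref{eqn:hexagon1inf} (an identity of transformations $\otimes_A\ast(\otimes_A\boxtimes\id)\Rightarrow\otimes_A\ast(\otimes_A\boxtimes\id)$) to \eqref{eqn:hexagon2inf} (an identity of transformations $\otimes_A\ast(\id\boxtimes\otimes_A)\Rightarrow\otimes_A\ast(\id\boxtimes\otimes_A)$). Concretely, the three terms of \eqref{eqn:hexagon1inf} get sent to the three terms of \eqref{eqn:hexagon2inf}: the left-hand side $\alpha^{-1}\circ(t\ast\Id)\circ\alpha$ becomes $\alpha\circ(t\ast\Id)\circ\alpha^{-1}$, the term $\Id\ast(t\boxtimes\Id)$ becomes $\Id\ast(\Id\boxtimes t)$, and the conjugated third term becomes the $\gamma$-conjugated third term of \eqref{eqn:hexagon2inf}. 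It is also instructive (and a good consistency check) to verify this at the level of components: under $\gamma$-equivariance, which in components reads $t_{M,N}=\gamma_{N,M}\,t_{N,M}\,\gamma_{M,N}$ together with an analogous statement for the double-indexed components, the explicit form \eqref{eqn:hexagon1infexplicit} of the first hexagon is turned into the explicit form \eqref{eqn:hexagon2infexplicit} of the second by relabelling and using $\gamma_{M,N}^{-1}=\gamma_{N,M}$.

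I expect the main obstacle to be purely bookkeeping: keeping track of which whiskerings ($\tau_{12}$ versus $\tau_{23}$, left versus right $\boxtimes$) and which associator strands are involved at each stage, and confirming that every use of the interchange law between $\ast$ and $\circ$ falls under one of the two ``triviality'' clauses from the proof of Proposition \ref{prop:hexagoninf} — namely that one of the two composed transformations is strictly natural, or that the relevant pseudo-functor coherence \eqref{eqn:compositioncoherences} vanishes because $t$ sends identities to zero (property (2) of Definition \ref{def:pseudonatural}). Once this is checked, no genuinely new computation is needed: the equivalence is a formal consequence of $\gamma$-equivariance, $\gamma^2=\Id$, strict naturality of $\gamma$ and $\alpha$, and the interchange law. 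I would therefore present the argument as a short chain of equalities of pseudo-natural transformations, conjugating \eqref{eqn:hexagon1inf} by $\gamma$, and remark that reversing the conjugation recovers \eqref{eqn:hexagon1inf} from \eqref{eqn:hexagon2inf}, establishing the claimed equivalence.
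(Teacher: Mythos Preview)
Your proposal is correct, and the component-wise check you describe as a ``consistency check'' is in fact precisely the route the paper takes: it simply invokes the explicit component form of the hexagons from Remark~\ref{rem:infbraidingcomponents} together with the component form \eqref{eqn:gammaequivariancecomponents} of $\gamma$-equivariance, and leaves the rest as a direct verification. Your main line of argument is more structural, working at the level of the $3$-categorical calculus rather than in components; this is a legitimate alternative and has the merit of making clear \emph{why} the equivalence holds (it is the formal consequence of conjugating by the involutive symmetric braiding), whereas the paper's approach is quicker but more opaque. One small caution: the conjugation you need is not literally by a single whiskering like $\Id\ast(\gamma\boxtimes\Id)$ but rather by the full braiding on a triple product, which decomposes via the hexagon for $\gamma$ into a composite of such whiskerings --- this is exactly what shows up in the component calculation when one writes $\gamma_{M\otimes_A N,L}=(\id_M\otimes_A\gamma_{N,L})\,(\gamma_{M,L}\otimes_A\id_N)$, and it is where the two $\gamma$-equivariance applications (one per term on the right-hand side) enter.
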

\begin{proof}
This can be verified directly by using the equivalent component-wise
description of the infinitesimal hexagon identities from Remark \ref{rem:infbraidingcomponents}
and the fact that the $\gamma$-equivariance property reads in components as
\begin{subequations}\label{eqn:gammaequivariancecomponents}
\begin{flalign}\label{eqn:gammaequivariancecomponents1}
\gamma_{M,N}~t_{M,N} \,=\, t_{N,M}~\gamma_{M,N}\quad,
\end{flalign}
for all $M,N\in{}_A\CC$, and
\begin{flalign}\label{eqn:gammaequivariancecomponents2}
\gamma_{M^\prime,N^\prime}~t_{(M,N),(M^\prime,N^\prime)}(h\widetilde{\otimes}k) \,=\,
t_{(N,M),(N^\prime,M^\prime)}(k\widetilde{\otimes}h)~\gamma_{M,N}\quad,
\end{flalign}
\end{subequations}
for all
$M,N,M^\prime,N^\prime\in{}_A\CC$, $h\in {}_A\CC(M,M^\prime)^0$ and $k\in{}_A\CC(N,N^\prime)^0$.
\end{proof}

\subsection{Construction from \texorpdfstring{$2$}{2}-shifted Poisson structures}
\label{subsec:construction}
Suppose that $\pi = \sum_{m\geq 2}\pi^{(m)}\in \widehat{\Pol}(A,2)^4$
is a $2$-shifted Poisson structure (recall Definition \ref{def:nshiftedpoisson})
on the finitely generated semi-free CDGA $A$. 
In this subsection, we will
present an explicit construction which assigns to
this datum a $\gamma$-equivariant infinitesimal $2$-braiding $t:\otimes_A\Rightarrow\otimes_A$
in the sense of Definitions \ref{def:inf2braiding}
and \ref{def:inf2braidinggammaequivariant}
on the homotopy $2$-category ${}_A\CC =\mathsf{Ho}_2\big({}_A\dgMod^{\mathrm{fgsf}}\big)\in 
\dgCat_\bbK^{[-1,0],\mathrm{ps}}$ of the dg-category of finitely generated semi-free $A$-dg-modules.
We will illustrate later in Section \ref{sec:examples}
that our construction is a non-trivial homotopical generalization
of the standard construction of an infinitesimal braiding on the representation category
$\mathbf{Rep}(\g)$ of a Lie algebra $\g$ from an invariant symmetric tensor in $(\Sym^2\g)^\g$, see
e.g.\ \cite{Cartier} and \cite[Chapter XX]{Kassel}.
\sk

Our infinitesimal $2$-braiding $t$ is built as a composite of 
more primitive $\Ch_\bbK^{[-1,0]}$-enriched pseudo-natural transformations
which we are going to define now. Let us denote by
\begin{flalign}
\Omega_A[1]\otimes_A(\,\cdot\,) \,:\, {}_A\CC~\longrightarrow~{}_A\CC
\end{flalign}
the $\Ch_\bbK^{[-1,0]}$-enriched functor of taking the relative tensor product 
with the $1$-shift $\Omega_A[1]\in{}_A\CC $ of the dg-module of K\"ahler differentials
from \eqref{eqn:Kaehlerdifferentials}. We define a 
$\Ch_\bbK^{[-1,0]}$-enriched pseudo-natural transformation
\begin{subequations}\label{eqn:zetatransformation}
\begin{flalign}
\xi\,:\, \id ~\Longrightarrow~\Omega_A[1]\otimes_A(\,\cdot\,)\,:\, {}_A\CC~\longrightarrow~{}_A\CC
\end{flalign}
from the $\Ch_\bbK^{[-1,0]}$-enriched  identity functor $\id : {}_A\CC\to {}_A\CC$
by specifying its components as in Definition \ref{def:pseudonatural}:
For each object $M\in {}_A\CC$, the component 
\begin{flalign}
\big(\xi_M : M\to \Omega_A[1]\otimes_A M\big)\,\in\, {}_A\CC\big(M,\Omega_A[1]\otimes_A M\big)^0
\end{flalign}
is the $A$-dg-module map 
which is defined on the generators $\oone\otimes w\in M^\sharp \cong A^\sharp \otimes W^\sharp$ 
of $M$ by
\begin{flalign}
\xi_M\big(\oone\otimes w\big)\,:=\, s^{-1}\,(\dd_\dR\otimes \id)\big(\dd_M(\oone\otimes w)\big)\quad,
\end{flalign}
where we recall our notation from Subsection \ref{subsec:cochain}
that $s^{-1}\in\bbK[1]$ denotes the degree $-1$ element represented by the unit of $\bbK$.
For each pair of objects $M,M^\prime\in {}_A\CC$ and each $A$-dg-module map 
$(h: M\to M^\prime)\in {}_A\CC(M,M^\prime)^0$,
the component 
\begin{flalign}
\big(\xi_{M,M^\prime}(h) : M\to \Omega_A[1]\otimes_A M^\prime\big)\,\in\, 
{}_A\CC\big(M,\Omega_A[1]\otimes_A M^\prime\big)^{-1}
\end{flalign}
is the homotopy which is defined on the generators
$\oone\otimes w\in M^\sharp \cong A^\sharp \otimes W^\sharp$ 
of $M$ by
\begin{flalign}
\xi_{M,M^\prime}(h)\big(\oone\otimes w\big)\,:=\,- s^{-1}\,(\dd_\dR\otimes \id)\big(h(\oone\otimes w)\big)\quad.
\end{flalign}
\end{subequations}
\begin{lem}\label{lem:xipseudonatural}
The components in \eqref{eqn:zetatransformation} define a $\Ch_\bbK^{[-1,0]}$-enriched 
pseudo-natural transformation $\xi : \id\Rightarrow \Omega_A[1]\otimes_A(\,\cdot\,)$.
\end{lem}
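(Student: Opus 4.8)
The task is to verify that the data $(\xi_M)_{M}$ and $(\xi_{M,M'}(h))_{M,M',h}$ specified in \eqref{eqn:zetatransformation} satisfy the axioms of a $\Ch_\bbK^{[-1,0]}$-enriched pseudo-natural transformation from Definition \ref{def:pseudonatural}. There are essentially three things to check: (1) that each $\xi_M$ is a genuine morphism in ${}_A\CC$, i.e.\ a degree-$0$ chain map of $A$-dg-modules $M\to\Omega_A[1]\otimes_A M$; (2) that the assignment $h\mapsto\xi_{M,M'}(h)$ is the homotopy datum exhibiting pseudo-naturality, meaning $\dd(\xi_{M,M'}(h))=\xi_{M'}\circ h-(\Omega_A[1]\otimes_A h)\circ\xi_M$, together with the normalization $\xi_{M,M}(\id_M)=0$; and (3) that $h\mapsto\xi_{M,M'}(h)$ is linear and that the appropriate pseudo-functor coherence/composition conditions hold (the interaction with $\circ$-composition in ${}_A\CC$). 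The whole argument is a direct computation from the defining formulas, using only that $\dd_\dR:A\to\Omega_A$ is a derivation with $\dd_{\Omega_A}\dd_\dR=\dd_\dR\dd_A$ and the semi-free description $M^\sharp\cong A^\sharp\otimes W^\sharp$.

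First I would check (1). Since $M$ is finitely generated semi-free, an $A$-dg-module map out of $M$ is uniquely determined by its values on the generators $\oone\otimes w$, so $\xi_M$ is well-defined as an $A^\sharp$-linear map of degree $0$ (note the degree $-1$ shift from $s^{-1}$ compensates the degree $+1$ of $\dd_M$). It remains to check $\xi_M$ is a chain map, i.e.\ $\dd_{\Omega_A[1]\otimes_A M}\circ\xi_M=\xi_M\circ\dd_M$. On a generator $\oone\otimes w$ one writes $\dd_M(\oone\otimes w)=\sum a_i\otimes w_i$ and then expands both sides: the left-hand side uses the Leibniz rule for $\dd_{\Omega_A[1]\otimes_A M}$ together with $\dd_{\Omega_A[1]}=-\dd_{\Omega_A}$ and $\dd_{\Omega_A}\dd_\dR=\dd_\dR\dd_A$; the right-hand side uses $\dd_M\dd_M=0$ to re-expand $\xi_M(\dd_M(\oone\otimes w))$ after applying the module axiom $\dd_M(a\cdot m)=\dd_A(a)\cdot m+(-1)^{|a|}a\cdot\dd_M(m)$. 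The two expressions match once one keeps careful track of the Koszul signs coming from $s^{-1}$ and from moving $\dd_\dR$ past module elements; this is routine sign-bookkeeping.

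Next I would do (2), the pseudo-naturality square. Both $\xi_{M'}\circ h-(\Omega_A[1]\otimes_A h)\circ\xi_M$ and $\dd(\xi_{M,M'}(h))$ are maps $M\to\Omega_A[1]\otimes_A M'$, and since everything in sight is determined on generators it suffices to compare values on $\oone\otimes w$. Using $h(\oone\otimes w)=\sum b_j\otimes w'_j$ one computes $\xi_{M'}(h(\oone\otimes w))$ by $A$-linearity of $\xi_{M'}$, computes $(\Omega_A[1]\otimes_A h)(\xi_M(\oone\otimes w))$ directly, and computes $\dd(\xi_{M,M'}(h))=\dd_{\Omega_A[1]\otimes_A M'}\circ\xi_{M,M'}(h)+\xi_{M,M'}(h)\circ\dd_M$ (the sign in $\dd$ of a degree $-1$ element); the term $\dd_\dR\dd_A=\dd_{\Omega_A}\dd_\dR$ again mediates the matching, and the overall sign is arranged precisely by the minus sign in the definition of $\xi_{M,M'}(h)$. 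Normalization $\xi_{M,M}(\id_M)(\oone\otimes w)=-s^{-1}(\dd_\dR\otimes\id)(\oone\otimes w)$, and since $\dd_\dR(\oone)=0$ (the de Rham differential of the unit vanishes, as $\dd_\dR$ is a derivation), this is zero. Finally, (3): linearity of $h\mapsto\xi_{M,M'}(h)$ is immediate from the formula, and the composition coherences are trivial here because the relevant transformations built from $\Omega_A[1]\otimes_A(\cdot)$ and $\id$ are strict functors, so the only nontrivial coherence data reduces to the square already checked in (2).

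\textbf{Main obstacle.} There is no conceptual difficulty: the sole point requiring care is the Koszul sign accounting, in particular the signs introduced by the shift $s^{-1}$, by the differential $\dd_{\Omega_A[1]}=-\dd_{\Omega_A}$, by evaluating the degree $-1$ homotopy $\xi_{M,M'}(h)$, and by commuting $\dd_\dR$ through homogeneous module elements. The sign in front of the formula for $\xi_{M,M'}(h)$ is exactly what makes the pseudo-naturality square close, so the ``hard part'', such as it is, amounts to checking that this single sign choice is globally consistent across all three axioms.
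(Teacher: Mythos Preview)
Your overall strategy matches the paper's---work component-wise on generators using the semi-free description and reduce everything to the Leibniz rule for $\dd_{\dR}$ and the compatibility $\dd_{\Omega_A}\dd_{\dR}=\dd_{\dR}\dd_A$. However, there are two genuine gaps.

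First, you have not addressed the second half of property~(1) in Definition~\ref{def:pseudonatural}. A $\Ch_\bbK^{[-1,0]}$-enriched pseudo-natural transformation requires not only the identity for degree-$0$ morphisms $h$ (which you sketch in your step~(2)), but also, for every degree~$-1$ element $k\in{}_A\CC(M,M')^{-1}$, the identity
\[
(\id_{\Omega_A[1]}\otimes_A k)\,\xi_M - \xi_{M'}\,k \,=\, \xi_{M,M'}\big(\dd_{\hom_A}(k)\big)\quad.
\]
This is a separate computation, and it does \emph{not} hold on the nose in $\hom_A(M,\Omega_A[1]\otimes_A M')$: the two sides differ by the differential of a degree~$-2$ element $q$ given by $q(\oone\otimes w)=-s^{-1}(\dd_{\dR}\otimes\id)(k(\oone\otimes w))$. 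The equality therefore only holds after truncation to ${}_A\CC(M,\Omega_A[1]\otimes_A M')=\tau^{[-1,0]}\hom_A(M,\Omega_A[1]\otimes_A M')$, where $2$-homotopies become zero. This truncation argument is essential and is precisely where the restriction to the homotopy $2$-category ${}_A\CC$ enters; without it the lemma is false.

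Second, your dismissal of property~(3) is incorrect. The composition axiom
\[
\xi_{M,M''}(h'\,h)\,=\,\xi_{M',M''}(h')\,h + (\id_{\Omega_A[1]}\otimes_A h')\,\xi_{M,M'}(h)
\]
is a condition on the data $\xi_{M,M'}$ themselves, not a statement about coherence of the functors $\id$ and $\Omega_A[1]\otimes_A(\,\cdot\,)$; strictness of those functors does not make it automatic. It must be checked directly, and it follows from applying the Leibniz rule to $\dd_{\dR}(\und{h}_i^{~j}\,\und{h}_j^{\prime\,k})$ in the basis expansion. This is a short computation, but it is not ``trivial'' in the sense you claim.

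A minor point: your pseudo-naturality square in step~(2) has the sign opposite to the convention in Definition~\ref{def:pseudonatural}; with $F=\id$ and $G=\Omega_A[1]\otimes_A(\,\cdot\,)$ one should obtain $\dd\big(\xi_{M,M'}(h)\big)=(\id\otimes_A h)\,\xi_M-\xi_{M'}\,h$.
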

\begin{proof}
Before we verify the three properties (1), (2) and (3) from Definition \ref{def:pseudonatural},
let us first comment on the well-definedness of the components of $\xi$.
Concerning (i) from Definition \ref{def:pseudonatural}, 
we have to show that $\dd_{\hom_A}(\xi_M) := \dd_{\Omega_A[1]\otimes_A M}\, \xi_M - \xi_M\,\dd_M 
= 0$, i.e.\ that $\xi_M$ commutes with the differentials, which can be verified by picking a basis
$\{w_i\in W^\sharp\}$ and evaluating on the associated generators $\oone\otimes w_i\in M$. 
This allows us to write $\dd_M(\oone\otimes w_i) =: \und{M}_{i}^{~j}\otimes w_j$
and hence $\xi_M(\oone\otimes w_i) = s^{-1}\,\dd_{\dR}(\und{M}_{i}^{~j})\otimes w_j$, 
with $\und{M}_{i}^{~j}\in A$ and summations over repeated indices understood. We then compute
\begin{flalign}
\nn \dd_{\hom_A}(\xi_M)\big(\oone\otimes w_i\big)\,&=\,
\dd_{\Omega_A[1]\otimes_A M}\big(\xi_M(\oone\otimes w_i)\big) - 
\xi_M\big(\dd_M(\oone\otimes w_i) \big)\\
\nn \,&=\, -s^{-1}\dd_{\Omega_A\otimes_A M}\big(\dd_{\dR}(\und{M}_{i}^{~j})\otimes w_j\big)
- \und{M}_{i}^{~j}\,s^{-1}\,\dd_\dR(\und{M}_{j}^{~k})\otimes w_k\\
\nn \, &=\,-s^{-1}\,\Big(\dd_{\dR}\big(\dd_A(\und{M}_{i}^{~j})\big) \otimes w_j
+ (-1)^{\vert \und{M}_{i}^{~j}\vert}\, \dd_\dR\big( \und{M}_{i}^{~j}\,\und{M}_{j}^{~k}\big)\otimes w_k \Big)\\
\,&=\,0\quad,
\end{flalign}
where the last identity follows from expressing the square-zero condition $\dd_M\dd_M(\oone\otimes w_i)=0$
in our index notation. Concerning (ii) from Definition \ref{def:pseudonatural}, 
we note that $\xi_{M,M^\prime}(h)$ as defined 
in \eqref{eqn:zetatransformation} is clearly $\bbK$-linear in the argument $h$.
\sk

Property (2) from Definition \ref{def:pseudonatural} follows immediately
from $\dd_{\dR}\oone=0$. To verify the first property in (1),
we use also the index notations $h(\oone \otimes w_i) =:\und{h}_{i}^{~{j}}\otimes w^{\prime}_{j}$
and $\dd_{M^\prime}(\oone\otimes w^\prime_{i})=:\und{M}_{i}^{\prime~j}\otimes w^\prime_j$,
where $\{w^\prime_i\in W^{\prime\sharp}\}$ is a choice of basis for the generators
of $M^\prime$, and compute
\begin{flalign}
\nn \dd_{\hom_A}\big(\xi_{M,M^\prime}(h)\big)\big(\oone\otimes w_i\big)\,&=\,
\dd_{\Omega_A[1]\otimes_A M^\prime}\big(\xi_{M,M^\prime}(h)\big(\oone\otimes w_i\big)\big)
+ \xi_{M,M^\prime}(h)\big(\dd_M(\oone\otimes w_i)\big)\\
\nn \, &=\,s^{-1}\,\Big(\dd_{\Omega_A\otimes_A M^\prime}\big(\dd_{\dR}(\und{h}_i^{~j})\otimes w^\prime_j\big)
- \und{M}_i^{~j}\,\dd_{\dR}(\und{h}_j^{~k})\otimes w^\prime_k\Big)\\
\nn \, &=\,s^{-1}\,\Big(\dd_{\dR}\big(\dd_A(\und{h}_i^{~j})\big)\otimes w^\prime_j
+ (-1)^{\vert \und{h}_i^{~j}\vert}\,\dd_{\dR}(\und{h}_i^{~j})\,\und{M}_j^{\prime~k}\otimes w^\prime_k\\
\nn &\qquad\qquad
- \und{M}_i^{~j}\,\dd_{\dR}(\und{h}_j^{~k})\otimes w^\prime_k\Big)\\
\nn \,&=\,s^{-1}\Big(- (-1)^{\vert \und{h}_i^{~j}\vert}\,\und{h}_i^{~j}\,\dd_{\dR}(\und{M}_j^{\prime~k})\otimes w^\prime_k
+ \dd_{\dR}(\und{M}_i^{~j})\,\und{h}_j^{~k}\otimes w^\prime_k\Big)\\
\,&=\, (\id_{\Omega_A[1]} \otimes_A h)\big(\xi_M(\oone\otimes w_i)\big) - 
\xi_{M^\prime}\big(h(\oone\otimes w_i)\big)\quad,
\end{flalign}
where the fourth step follows from expressing
the cocycle condition $\dd_{\hom_A}(h)\big(\oone\otimes w_i\big)=0$ in our index notation.
To verify the second property in (1), one performs a similar calculation for 
$(k: M\to M^\prime)\in {}_A\CC(M,M^\prime)^{-1}$ of degree $-1$ and finds that 
\begin{flalign}
\nn &(\id_{\Omega_A[1]} \otimes_A k)\big(\xi_M(\oone\otimes w_i)\big) - 
\xi_{M^\prime}\big(k(\oone\otimes w_i)\big) -\xi_{M,M^\prime}\big(\dd_{\hom_A}(k)\big)\big(\oone\otimes w_i\big)\\
&\quad \,=\, s^{-1}\Big(\dd_{\dR}\big(\dd_A(\und{k}_i^{~j})\big)\otimes w_j^\prime + 
(-1)^{\vert \und{k}_i^{~j}\vert}\, \dd_{\dR}(\und{k}_i^{~j})\, \und{M}_j^{\prime~k}\otimes w_k^\prime + 
(-1)^{\vert \und{M}_i^{~j}\vert}\, \und{M}_{i}^{~j}\,\dd_{\dR}(\und{k}_j^{~k})\otimes w^\prime_k\Big)\quad.
\end{flalign}
The right-hand side can be identified as the differential $\dd_{\hom_A}(q)(\oone\otimes w_i)$ of the 
$2$-homotopy $q\in \hom_A(M,\Omega_A[1]\otimes_A M^\prime)^{-2}$ which is defined on generators by
$q(\oone\otimes w) = -s^{-1}\,(\dd_\dR\otimes \id)\big(k(\oone\otimes w)\big)$,
hence the right-hand side is zero in our truncated cochain complex ${}_A\CC\big(M,\Omega_A[1]\otimes_A M^\prime\big)$.
\sk

It remains to verify property (3) from Definition \ref{def:pseudonatural}.
Given $(h: M\to M^\prime)\in {}_A\CC(M,M^\prime)^{0}$ and $(h^\prime : M^\prime\to M^{\prime\prime})
\in {}_A\CC(M^\prime,M^{\prime\prime})^{0}$, we introduce one more index notation
$h^\prime(\oone\otimes w_i^\prime)=: \und{h}_i^{\prime~j}\otimes w_j^{\prime\prime}$, where 
$\{w^{\prime\prime}_i\in W^{\prime\prime\sharp}\}$ is a choice of basis for the generators
of $M^{\prime\prime}$, and compute
\begin{flalign}
\nn \xi_{M,M^{\prime\prime}}(h^\prime\,h)\big(\oone\otimes w_i\big)\,&=\,
-s^{-1}\,\dd_{\dR}\big(\und{h}_{i}^{~j}\,\und{h}_j^{\prime ~k}\big)\otimes w_k^{\prime\prime}\\
\nn \,&=\,-s^{-1}\Big(\dd_{\dR}(\und{h}_{i}^{~j})\,\und{h}_j^{\prime ~k}\otimes w_k^{\prime\prime} +
\und{h}_{i}^{~j}\,\dd_{\dR}(\und{h}_j^{\prime ~k})\otimes w_k^{\prime\prime}\Big)\\
\,&=\,\xi_{M^\prime,M^{\prime\prime}}(h^\prime)\big(h(\oone\otimes w_i)\big) + 
(\id_{\Omega_A[1]} \otimes_A h^\prime)\big(\xi_{M,M^{\prime}}(h)(\oone\otimes w_i)\big)\quad.
\end{flalign}
This concludes the proof.
\end{proof}

In addition to the $\Ch_\bbK^{[-1,0]}$-enriched pseudo-natural transformation
$\xi$ from \eqref{eqn:zetatransformation}, we require two more \textit{strict}
$\Ch_\bbK^{[-1,0]}$-enriched natural transformations to build our infinitesimal $2$-braiding.
First, using the original symmetric braiding $\gamma$, we obtain a family of isomorphisms
\begin{subequations}
\begin{flalign}
\id\otimes_A\gamma_{M,\Omega_A[1]}\otimes_A\id\,:\,\Omega_A[1]\otimes_A M \otimes_A \Omega_A[1]\otimes_A N
~\longrightarrow~ \Omega_A[1]\otimes_A \Omega_A[1]\otimes_A M \otimes_A N
\end{flalign}
in ${}_A\CC$, for all $M,N\in {}_A\CC$, which 
defines a strict $\Ch_\bbK^{[-1,0]}$-enriched natural isomorphism
\begin{flalign}
\otimes_A \ast\Big(\big(\Omega_A[1]\otimes_A(\,\cdot\,)\big)\boxtimes\big(\Omega_A[1]\otimes_A(\,\cdot\,)\big) \Big)
~\stackrel{\cong}{\Longrightarrow}~\Big(\Omega_A[1]\otimes_A\Omega_A[1]\otimes_A(\,\cdot\,)\Big)\ast \otimes_A\quad.
\end{flalign}
\end{subequations}
Second, consider the extension 
$\langle\,\cdot\,,\,\cdot\,\rangle :\Sym_A^2\big(\mathsf{T}_{\! A}[-1]\big)\otimes_A \Omega_A[1]\otimes_A\Omega_A[1]
\to A$ of the duality pairing \eqref{eqn:dualitypairing} defined by
\begin{multline}\label{eqn:extendedpairing}
\big\langle sD\,sD^\prime, s^{-1}\omega\otimes_A s^{-1}\omega^\prime\big\rangle \,:=\,
(-1)^{\vert\omega\vert + \vert D\vert}\,\big\langle D, \langle D^\prime,\omega\rangle\cdot \omega^\prime\big\rangle 
\\[4pt]
+ (-1)^{(\vert D\vert +1) (\vert D^\prime\vert +1)}\,(-1)^{\vert\omega\vert + \vert D^\prime\vert}\,
\big\langle D^\prime, \langle D,\omega\rangle\cdot \omega^\prime\big\rangle
\quad,
\end{multline} 
for all homogeneous $D, D^\prime\in \mathsf{T}_{\! A}$ 
and $\omega,\omega^\prime\in\Omega_A$.
Considering the weight $2$ component 
$\pi^{(2)}\in\Sym_A^2\big(\mathsf{T}_{\! A}[-3]\big)^4\cong \Sym_A^2\big(\mathsf{T}_{\! A}[-1]\big)^0$
of the $2$-shifted Poisson structure (recall that this is a cocycle, see Remark \ref{rem:nshiftedpoisson}),
we obtain a family of morphisms
\begin{subequations}
\begin{flalign}
\langle \pi^{(2)},\,\cdot\,\rangle\otimes_A\id\otimes_A\id 
\,:\, \Omega_A[1]\otimes_A\Omega_A[1]\otimes_A M\otimes_A N~\longrightarrow~A\otimes_A M\otimes_A N \,\cong\,
M\otimes_A N
\end{flalign}
in ${}_A\CC$, for all $M,N\in {}_A\CC$, which defines a 
strict $\Ch_\bbK^{[-1,0]}$-enriched natural transformation
\begin{flalign}
\iota_{\pi}\,:\, \Big(\Omega_A[1]\otimes_A\Omega_A[1]\otimes_A(\,\cdot\,)\Big)\ast \otimes_A~\Longrightarrow~
\otimes_A\quad.
\end{flalign}
\end{subequations}
Using the $\circ$-composition of 
$\Ch_\bbK^{[-1,0]}$-enriched pseudo-natural transformations from Appendix \ref{app:ChCat},
we define the $\Ch_\bbK^{[-1,0]}$-enriched pseudo-natural transformation 
\begin{flalign}\label{eqn:infinitesimalbraidingfrom2shifted}
\begin{gathered}
\resizebox{.9\hsize}{!}{$
\xymatrix@C=0.1em{
\ar@{=}[d]\otimes_A\ar@{==>}[rr]^-{t} ~&~ ~&~\otimes_A\\
\otimes_A\ast\big(\id\boxtimes\id\big) \ar@{=>}[dr]_-{\Id\ast (\xi\boxtimes\xi)~~~~} ~&~ ~&~ 
\Big(\Omega_A[1]\otimes_A\Omega_A[1]\otimes_A(\,\cdot\,)\Big)\ast \otimes_A\ar@{=>}[u]_-{\iota_{\pi}}\\
~&~\otimes_A \ast\Big(\big(\Omega_A[1]\otimes_A(\,\cdot\,)\big)\boxtimes\big(\Omega_A[1]\otimes_A(\,\cdot\,)\big)\Big) \ar@{=>}[ru]_-{~~\cong}~&~
}$}
\end{gathered}
\end{flalign}
as our candidate for an infinitesimal $2$-braiding on ${}_A\CC$.
\begin{theo}\label{theo:mainresult}
Given any $2$-shifted Poisson structure $\pi\in \widehat{\Pol}(A,2)^4$ on a finitely generated
semi-free CDGA $A$ over the field $\bbK$, the $\Ch_\bbK^{[-1,0]}$-enriched pseudo-natural transformation $t$ 
from \eqref{eqn:infinitesimalbraidingfrom2shifted} defines a $\gamma$-equivariant
infinitesimal $2$-braiding in the sense of Definitions \ref{def:inf2braiding}
and \ref{def:inf2braidinggammaequivariant}
on the homotopy $2$-category ${}_A\CC =\mathsf{Ho}_2\big({}_A\dgMod^{\mathrm{fgsf}}\big)\in 
\dgCat_\bbK^{[-1,0],\mathrm{ps}}$ of the dg-category of finitely generated semi-free $A$-dg-modules.
\end{theo}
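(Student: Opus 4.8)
The plan is to check, in order, that the transformation $t$ of \eqref{eqn:infinitesimalbraidingfrom2shifted} is a well-defined $\Ch_\bbK^{[-1,0]}$-enriched pseudo-natural transformation, that it is $\gamma$-equivariant, and that it satisfies the infinitesimal hexagon identities of Definition~\ref{def:inf2braiding}. Well-definedness is immediate from what has already been set up: $\xi$ is $\Ch_\bbK^{[-1,0]}$-enriched pseudo-natural by Lemma~\ref{lem:xipseudonatural}, the natural isomorphism built from $\gamma_{M,\Omega_A[1]}$ and the transformation $\iota_\pi$ are \emph{strict} $\Ch_\bbK^{[-1,0]}$-enriched natural transformations (for $\iota_\pi$ this uses, via Remark~\ref{rem:nshiftedpoisson}, that $\pi^{(2)}$ is a $\dd_{\widehat{\Pol}}$-cocycle, so $\langle\pi^{(2)},-\rangle$ is a cochain map), and $\circ$-composition and whiskering preserve pseudo-naturality by the calculus of Appendix~\ref{app:ChCat}. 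So the substance of the theorem is the verification of the $\gamma$-equivariance identities \eqref{eqn:gammaequivariancecomponents} and of the infinitesimal hexagon identities.

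First I would unwind \eqref{eqn:infinitesimalbraidingfrom2shifted} on generators into explicit formulas for the single-indexed components $t_{M,N}\colon M\otimes_A N\to M\otimes_A N$ and the double-indexed (homotopy) components $t_{(M,N),(M',N')}(h\,\widetilde{\otimes}\,k)$. Writing $\dd_M(\oone\otimes w_i)=\und{M}_i^{~j}\otimes w_j$ and similarly for $N$, the single-indexed component is, up to Koszul signs from the shift $[1]$ and from $\gamma$, the contraction of the bivector $\pi^{(2)}$ against the de Rham differentials $\dd_\dR(\und{M}_i^{~j})$ and $\dd_\dR(\und{N}_a^{~b})$; the double-indexed component is the same contraction with one of the two module-differential data replaced by the corresponding datum of $h$ or $k$ through the homotopy components $\xi_{M,M'}(h)$ of \eqref{eqn:zetatransformation}. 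The key structural fact I would then isolate is a Leibniz rule for $\xi$ with respect to $\otimes_A$, namely
\[
\xi_{M\otimes_A N}\,=\,\xi_M\otimes_A\id_N\,+\,\big(\gamma_{M,\Omega_A[1]}\otimes_A\id_N\big)\,\big(\id_M\otimes_A\xi_N\big)\quad,
\]
(re-bracketings identified via the associator), together with its analogue for the double-indexed components. This is verified on generators using that the differential on $M\otimes_A N$ is induced by the Koszul rule from $\dd_M$ and $\dd_N$ and that $\dd_\dR$ is a derivation; the Koszul sign incurred in transporting the $\Omega_A[1]$-leg of $\xi_N$ past $M$ matches precisely the sign of $\gamma_{M,\Omega_A[1]}$.

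With the component formulas and the Leibniz rule available, the first infinitesimal hexagon identity \eqref{eqn:hexagon1infexplicit} follows by substitution: in $t_{M,N\otimes_A L}$ the Leibniz rule splits the contraction against $\pi^{(2)}$ into a term that contracts the $\xi_M$- and $\xi_N$-legs, which is exactly $t_{M,N}\otimes_A\id_L$, and a term that contracts the $\xi_M$- and $\xi_L$-legs with $N$ carried in the middle, which is identified with $\big(\gamma_{N,M}\otimes_A\id_L\big)\big(\id_N\otimes_A t_{M,L}\big)\big(\gamma_{M,N}\otimes_A\id_L\big)$ after using naturality of $\gamma$ to braid $M$ past $N$ and back. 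The double-indexed part of \eqref{eqn:hexagon1infexplicit} is obtained by the same manipulation at the level of homotopies; here one additionally uses that the $\ast$--$\circ$ interchange coherences are trivial in every instance that occurs (as in the proof of Proposition~\ref{prop:hexagoninf}) and that the a priori obstruction terms are coboundaries in the good truncation $\Ch_\bbK^{[-1,0]}$, hence vanish in ${}_A\CC$. Finally, $\gamma$-equivariance \eqref{eqn:gammaequivariancecomponents} follows from the \emph{symmetry} of $\pi^{(2)}\in\Sym_A^2(\mathsf{T}_{\! A}[-1])$: interchanging the two tensor factors swaps the $\xi_M$- and $\xi_N$-legs of the contraction, which is invariant under this swap up to exactly the Koszul sign built into $\gamma_{M,N}$; Proposition~\ref{prop:equivariancehexagons} then shows that the second infinitesimal hexagon identity \eqref{eqn:hexagon2inf} is equivalent to the first, which completes the proof.

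The hardest part will be the double-indexed (homotopy-level) bookkeeping: tracking the Koszul signs produced by the shifts $[1]$, the braidings $\gamma_{M,\Omega_A[1]}$ and the $\circ$-composition formula for pseudo-natural transformations, verifying that the Leibniz rule for $\xi$ persists for the homotopy components, and checking that every would-be obstruction term there is exact in the truncated morphism complexes. By contrast, the single-indexed identities are a fairly direct consequence of the Leibniz rule for $\xi$ and the naturality of $\gamma$.
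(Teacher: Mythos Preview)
Your proposal is correct and follows essentially the same route as the paper: compute the components of $t$ explicitly, establish the Leibniz-type decomposition $\xi_{M\otimes_A N}=\xi_M\otimes_A\id+(\gamma_{M,\Omega_A[1]}\otimes_A\id)(\id\otimes_A\xi_N)$ (and its homotopy analogue), derive the first infinitesimal hexagon from it, prove $\gamma$-equivariance from the symmetry of $\pi^{(2)}$ (with the truncation argument killing a $2$-homotopy term at the double-indexed level), and then invoke Proposition~\ref{prop:equivariancehexagons} for the second hexagon. The only difference is cosmetic ordering: the paper verifies $\gamma$-equivariance before the hexagons, whereas you do the hexagon first --- both work, and your identification of the homotopy-level sign bookkeeping as the main technical burden matches the paper's emphasis.
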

\begin{rem}\label{rem:mainresult}
We would like to highlight that the infinitesimal $2$-braiding \eqref{eqn:infinitesimalbraidingfrom2shifted}
depends only on the bivector component $\pi^{(2)}$ of the $2$-shifted Poisson structure 
$\pi = \sum_{m\geq 2}\pi^{(m)}\in \widehat{\Pol}(A,2)^4$. This is to be expected
since infinitesimal $2$-braidings are characterized by the infinitesimal hexagon identities 
from Definition \ref{def:inf2braiding}, which are linear in $t$. Hence, the higher-weight
components $\pi^{(3)}, \pi^{(4)},\dots$ of the $2$-shifted Poisson structure 
are not expected to enter at this linear level, since they control  
non-linear aspects through the Maurer-Cartan equations from Remark \ref{rem:nshiftedpoisson},
which are non-linear identities of the form 
$\dd_{\widehat{\Pol}}(\pi^{(3)}) + \frac{1}{2}\{\pi^{(2)},\pi^{(2)}\}=0$.
However, we expect that these higher-weight components 
and their Maurer-Cartan identities will play an important role
for deformation quantizations at higher orders in $\hbar$.
\end{rem}
\begin{proof}[Proof of Theorem \ref{theo:mainresult}]
The most straightforward proof of this result is to compute
the components of the $\Ch_\bbK^{[-1,0]}$-enriched pseudo-natural transformation $t$
from \eqref{eqn:infinitesimalbraidingfrom2shifted}, and then to verify directly that 
these components satisfy the $\gamma$-equivariance property from \eqref{eqn:gammaequivariancecomponents}
and the infinitesimal hexagons from Remark \ref{rem:infbraidingcomponents}.
Using the composition operations from Appendix \ref{app:ChCat}, one finds
by a straightforward computation that
\begin{subequations}\label{eqn:tfromPoissoncomponents}
\begin{flalign}\label{eqn:tfromPoissoncomponents1}
\begin{gathered}
\xymatrix@C=6.5em@R=3em{
M\otimes_A N \ar[d]_-{\xi_M\otimes_A \xi_N} \ar[r]^-{t_{M,N}}~&~M\otimes_A N \\
\Omega_A[1]\otimes_A M\otimes_A \Omega_A[1]\otimes_A N  \ar[r]_-{\id\otimes_A\gamma_{M,\Omega_A[1]}\otimes_A \id}~&~
\Omega_A[1]\otimes_A \Omega_A[1]\otimes_A M\otimes_A N \ar[u]_-{\langle\pi^{(2)},\,\cdot\,\rangle\otimes_A\id\otimes_A\id}
}
\end{gathered}\qquad,
\end{flalign}
for all $M,N\in{}_A\CC$, and
\begin{flalign}\label{eqn:tfromPoissoncomponents2}
\resizebox{.9\hsize}{!}{$
\begin{gathered}
\xymatrix@C=6.5em@R=4em{
M\otimes_A N \ar[d]_-{\substack{\xi_{M,M^\prime}(h)\otimes_A (\id\otimes_A k)\xi_N\\+ \xi_{M^\prime}h\otimes_A \xi_{N,N^\prime}(k)}} \ar[r]^-{t_{(M,N),(M^\prime,N^\prime)}(h\widetilde{\otimes}k)}~&~M^\prime\otimes_A N^\prime \\
\Omega_A[1]\otimes_A M^\prime \otimes_A \Omega_A[1]\otimes_A N^\prime  
\ar[r]_-{\id\otimes_A\gamma_{M^\prime,\Omega_A[1]}\otimes_A \id}~&~
\Omega_A[1]\otimes_A \Omega_A[1]\otimes_A M^\prime \otimes_A N^\prime \ar[u]_-{\langle\pi^{(2)},\,\cdot\,\rangle\otimes_A\id\otimes_A\id}
}
\end{gathered}\qquad,
$}
\end{flalign}
\end{subequations}
for all $M,N,M^\prime,N^\prime\in{}_A\CC$, $h\in{}_A\CC(M,M^\prime)^0$ and $k\in{}_A\CC(N,N^\prime)^0$.
\sk

The $\gamma$-equivariance property \eqref{eqn:gammaequivariancecomponents1}
for the degree $0$ components $t_{M,N}$ follows immediately by using 
symmetry of the extended pairing \eqref{eqn:extendedpairing}, i.e.\
$\langle\pi^{(2)},\,\cdot\,\rangle\,\gamma  = \langle\pi^{(2)},\,\cdot\,\rangle$,
and the hexagon identities for the original symmetric braiding $\gamma$.
To prove the $\gamma$-equivariance property \eqref{eqn:gammaequivariancecomponents2}
for the degree $-1$ components $t_{(M,N),(M^\prime,N^\prime)}(h\widetilde{\otimes}k)$,
one has to use additionally that
\begin{flalign}
\nn &\gamma_{\Omega_A[1]\otimes_A M^\prime, \Omega_A[1]\otimes_A N^\prime}\,\Big(\xi_{M,M^\prime}(h)\otimes_A (\id\otimes_A k)\xi_N+ \xi_{M^\prime}h\otimes_A \xi_{N,N^\prime}(k)\Big)\,\gamma_{N,M}\\
\nn &\qquad\, =\, (\id\otimes_A k)\xi_N \otimes_A \xi_{M,M^\prime}(h) + 
\xi_{N,N^\prime}(k) \otimes_A \xi_{M^\prime}h\\
\nn &\qquad\, =\,\xi_{N^\prime} k \otimes_A \xi_{M,M^\prime}(h) + 
\xi_{N,N^\prime}(k) \otimes_A (\id\otimes_A h) \xi_{M} + \dd_{\hom_A}\big(\xi_{N,N^\prime}(k)\otimes_A\xi_{M,M^\prime}(h)\big)\\
&\qquad\, =\,\xi_{N^\prime} k \otimes_A \xi_{M,M^\prime}(h) + 
\xi_{N,N^\prime}(k) \otimes_A (\id\otimes_A h) \xi_{M} \quad,
\end{flalign}
where in the second step we used property (1) of the Definition \ref{def:pseudonatural}
of $\Ch_\bbK^{[-1,0]}$-enriched pseudo-natural transformations and in the last step
we used that the morphism complexes of ${}_A\CC$ are by definition truncated to degrees $\{-1,0\}$,
hence the $2$-homotopy in the last term gets identified with $0$.
\sk

It remains to verify the first infinitesimal hexagon identity \eqref{eqn:hexagon1infexplicit},
since the second one \eqref{eqn:hexagon2infexplicit} then 
follow from Proposition \ref{prop:equivariancehexagons} and $\gamma$-equivariance.
For this we recall the definition of the components of $\xi$ from \eqref{eqn:zetatransformation}
and observe that
\begin{subequations}
\begin{flalign}
\xi_{N\otimes_A L} \,=\, \xi_N\otimes_A\id + (\gamma_{N,\Omega_A[1]}\otimes_A\id)\,(\id\otimes_A\xi_L)\quad,
\end{flalign}
for all $N,L\in{}_A\CC$, and
\begin{flalign}
\xi_{N\otimes_A L,N^\prime\otimes_A L^\prime}(k\otimes_A l)\,=\, 
 \xi_{N,N^\prime}(k)\otimes_A l + (\gamma_{N^\prime,\Omega_A[1]}\otimes_A\id)\,(k\otimes_A\xi_{L,L^\prime}(l))\quad,
\end{flalign}
\end{subequations}
for all $N,L,N^\prime,L^\prime\in{}_A\CC$, $k\in{}_A\CC(N,N^\prime)^0$ and $l\in{}_A\CC(L,L^\prime)^0$.
Using also the hexagon identities for the original symmetric braiding $\gamma$,
one verifies the infinitesimal hexagon identity \eqref{eqn:hexagon1infexplicit}.
\end{proof}

\begin{rem}\label{rem:tfromPoissonexplicitdescription}
For later use in our discussion of examples in 
Section \ref{sec:examples}, let us expand the components
$t_{M,N}$ and $t_{(M,N),(M^\prime,N^\prime)}(h\widetilde{\otimes}k)$
of the infinitesimal $2$-braiding \eqref{eqn:infinitesimalbraidingfrom2shifted}
in terms of a choice of bases for the finitely generated
semi-free $A$-dg-modules $M,N,M^\prime,N^\prime\in{}_A\CC$.
Using similar notations as in the proof of Lemma \ref{lem:xipseudonatural},
we denote by $\{w_i\in W^\sharp\}$ a basis for $M^\sharp\cong A^\sharp\otimes W^\sharp$, 
by $\{u_q\in U^\sharp\}$ a basis for $N^\sharp \cong A^\sharp\otimes U^\sharp$,
by $\{w_i^\prime\in W^{\prime \sharp}\}$ a basis for $M^{\prime\sharp}\cong A^\sharp\otimes W^{\prime\sharp}$, 
and by $\{u_q^\prime\in U^{\prime\sharp}\}$ a basis for $N^{\prime\sharp} \cong A^\sharp\otimes U^{\prime\sharp}$.
Making use of these bases, we write
\begin{subequations}
\begin{align}
\dd_M(\oone\otimes w_i) \,&=\, \und{M}_{i}^{~j}\otimes w_j\quad, &
\dd_N(\oone\otimes u_q) \,&=\, \und{N}_q^{~r}\otimes u_r\quad,\\
\dd_{M^\prime}(\oone\otimes w_i^\prime) 
\,&=\, \und{M}_{i}^{\prime~j}\otimes w_j^\prime\quad, & 
\dd_{N^\prime}(\oone\otimes u_q^\prime)
\,&=\, \und{N}_{q}^{\prime~r}\otimes u_r^\prime \quad,
\end{align}
for the differentials, 
and for the $A$-linear maps $h\in{}_A\CC(M,M^\prime)^0$ and 
$k\in{}_A\CC(N,N^\prime)^0$
we write 
\begin{flalign}
h(\oone\otimes w_i) \,=\, \und{h}_{i}^{~j}\otimes w_j^\prime\quad,\qquad 
 \qquad \quad k(\oone\otimes u_q) \, =\, \und{k}_{q}^{~r}\otimes u_r^\prime \quad,
\end{flalign}
\end{subequations}
where again all summations over repeated indices
are suppressed. Evaluating the diagram \eqref{eqn:tfromPoissoncomponents1}, we then find the explicit basis
expression
\begin{subequations}\label{eqn:tfromPoissonbasis}
\begin{flalign}\label{eqn:tfromPoissonbasis1}
\nn &t_{M,N}\big(\oone\otimes w_i\otimes u_q\big)\,=\, (t_{M,N})_{iq}^{~~jr}\otimes w_j\otimes u_r  \\
&\qquad\,=\,(-1)^{\vert w_j\vert\,(\vert \und{N}_q^{~r}\vert-1)}~
\big\langle \pi^{(2)}, s^{-1}\dd_{\dR}(\und{M}_{i}^{~j})\otimes_A s^{-1}\dd_{\dR}(\und{N}_{q}^{~r})\big\rangle\otimes w_j\otimes u_r\quad.
\end{flalign}
Furthermore, evaluating the diagram \eqref{eqn:tfromPoissoncomponents2}, we find
\begin{flalign}\label{eqn:tfromPoissonbasis2}
\nn &t_{(M,N),(M^\prime,N^\prime)}(h\widetilde{\otimes}k)(\oone\otimes w_i\otimes u_q)\,=\,
\big(t_{(M,N),(M^\prime,N^\prime)}(h\widetilde{\otimes}k)\big)_{iq}^{~~jr}\otimes
w_j^\prime\otimes u_r^\prime\\
\nn &\qquad \,=\,-(-1)^{\vert w_j^\prime\vert \,(\vert \und{N}_{q}^{~s}\vert + \vert \und{k}_s^{~r}\vert -1)}~
\big\langle \pi^{(2)}, s^{-1}\dd_{\dR}(\und{h}_i^{~j}) \otimes_A s^{-1}\dd_{\dR}(\und{N}_{q}^{~s})\,\und{k}_s^{~r}\big\rangle\otimes w_j^\prime\otimes u_r^\prime\\
&\qquad ~\, \quad -(-1)^{\vert \und{h}_i^{~k}\vert + \vert w_j^\prime\vert\,(\vert \und{k}_{q}^{~r}\vert-1)}
~\big\langle \pi^{(2)}, s^{-1}\und{h}_{i}^{~k}\,\dd_{\dR}(\und{M}_k^{\prime~j})\otimes_A s^{-1}\dd_{\dR}(\und{k}_q^{~r})\big\rangle\otimes
w_j^\prime \otimes u_r^\prime\quad.
\end{flalign}
\end{subequations}
Given a concrete example for the bivector
$\pi^{(2)}\in\Sym_A^2\big(\mathsf{T}_{\! A}[-3]\big)^4\cong \Sym_A^2\big(\mathsf{T}_{\! A}[-1]\big)^0$,
one can evaluate these expressions further by using the definition \eqref{eqn:extendedpairing}
of the duality pairing.
\end{rem}

\subsection{Towards higher-order deformations}
\label{subsec:higherorders}
A powerful result in the $1$-categorical theory of infinitesimal braidings
is that, upon the choice of a Drinfeld associator \cite{Drinfeld90}, every infinitesimal
braiding lifts to a formal deformation (i.e.\ to all orders in $\hbar$)
of the given symmetric monoidal category into a braided monoidal one.
See e.g.\ \cite{Cartier} and \cite[Chapters XIX--XX]{Kassel} for the details.
The aim of this subsection is to explore which aspects of this construction admit 
a direct generalization to our concept of infinitesimal $2$-braidings on ${}_A\CC$. 
We observe that a direct generalization is only possible under rather 
restrictive assumptions (see Proposition \ref{propo:allorderdeformation}
and Remark \ref{rem:allorderdeformation}), while the general case points towards the necessity
to develop a homotopy-coherent variant of Drinfeld associators 
which also describe pentagonator and hexagonator modifications.
Our study in this subsection has some parallels with \cite[Section 2.2]{Joao},
however we believe that the relations for the modifications in 
\eqref{eqn:higherordermodifications} entering our explicit attempt towards
a deformation construction at order $\hbar^2$ are new.
\sk

To study formal deformations to all orders in $\hbar$, we introduce
the commutative $\bbK$-algebra $\bbK[[\hbar]]$ of formal power series and perform
a base change 
\begin{flalign}
{}_A\CC[[\hbar]]\,:=\,{}_A\CC \otimes \bbK[[\hbar]]\,\in\,\dgCat_{\bbK[[\hbar]]}^{[-1,0],\mathrm{ps}}
\end{flalign}
along the commutative $\bbK$-algebra morphism $\bbK\to \bbK[[\hbar]]$.
The goal is then to deform the symmetric monoidal structure on ${}_A\CC[[\hbar]]$
into a braided monoidal structure. In order to simplify notations,
we suppress in what follows the original associator $\alpha$ and simply write
\begin{flalign}
\otimes_A^2 \,:=\, \otimes_A\ast(\otimes_A\boxtimes \id) \,\cong\, \otimes_A\ast(\id\boxtimes \otimes_A) \,:\, 
{}_A\CC[[\hbar]]\boxtimes {}_A\CC[[\hbar]]\boxtimes {}_A\CC[[\hbar]]~\longrightarrow~ {}_A\CC[[\hbar]]
\end{flalign}
for the $2$-times iterated tensor product, and similarly
$\otimes_A^n : {}_A\CC[[\hbar]]^{\boxtimes(n+1)}\to {}_A\CC[[\hbar]]$ for the $n$-times iterated tensor product.
Given any $\gamma$-equivariant infinitesimal $2$-braiding $t : \otimes_A\Rightarrow\otimes_A$,
we denote by
\begin{subequations}\label{eqn:t_ij}
\begin{flalign}
t_{ij}\,:\,\otimes_A^{n}~\Longrightarrow~\otimes_A^{n}\,:\,  {}_A\CC[[\hbar]]^{\boxtimes(n+1)}~\longrightarrow~ {}_A\CC[[\hbar]]
\end{flalign}
the $\Ch_{\bbK[[\hbar]]}^{[-1,0]}$-enriched pseudo-natural transformation which evaluates
$t$ on the $i$-th and $j$-th tensor factor, for all $i,j\in\{1,\dots,n+1\}$ with $i\neq j$.
Explicitly, the components of $t_{ij}$ read as
\begin{flalign}
\begin{gathered}
\xymatrix@C=13em{
\ar[d]_-{\cong}\bigotimes\limits_{k=1}^{n+1} M_k \ar[r]^-{(t_{ij})_{M_1,\dots,M_{n+1}}}~&~ \bigotimes\limits_{k=1}^{n+1} M_k\\
M_i\otimes_A M_j\otimes_A \bigotimes\limits_{\substack{k=1\\k\neq i,j}}^{n+1} M_k\ar[r]_-{t_{M_i,M_j}\otimes_A\bigotimes_k\id_{M_k}}~&~
M_i\otimes_A M_j\otimes_A \bigotimes\limits_{\substack{k=1\\k\neq i,j}}^{n+1} M_k\ar[u]_-{\cong}
}
\end{gathered}\quad,
\end{flalign}
for all $M_1,\dots,M_{n+1}\in {}_A\CC[[\hbar]]$, and
\begin{flalign}
\begin{gathered}
\xymatrix@C=13em{
\ar[d]_-{\cong}\bigotimes\limits_{k=1}^{n+1} M_k \ar[r]^-{(t_{ij})_{(M_1,\dots,M_{n+1}),(M_1^\prime,\dots,M_{n+1}^\prime)}(h_1\widetilde{\otimes}\cdots \widetilde{\otimes}h_{n+1})}~&~ \bigotimes\limits_{k=1}^{n+1} M^\prime_k\\
M_i\otimes_A M_j\otimes_A \bigotimes\limits_{\substack{k=1\\k\neq i,j}}^{n+1} M_k\ar[r]_-{t_{(M_i,M_j),(M_i^\prime,M_j^\prime)}(h_i\widetilde{\otimes}h_j)\otimes_A\bigotimes_k h_k}~&~
M^\prime_i\otimes_A M^\prime_j\otimes_A \bigotimes\limits_{\substack{k=1\\k\neq i,j}}^{n+1} M^\prime_k\ar[u]_-{\cong}
}
\end{gathered}\quad,
\end{flalign}
\end{subequations}
for all $M_1,\dots,M_{n+1},M^\prime_1,\dots,M^\prime_{n+1}\in {}_A\CC[[\hbar]]$
and $h_k\in {}_A\CC[[\hbar]](M_k,M_k^\prime)^0$, for $k=1,\dots,n+1$,
where the unlabeled vertical isomorphisms are given by the original symmetric braiding $\gamma$.
Let us note that there is an obvious generalization
of the above construction which assigns to every pair 
$I=(i_1\cdots i_I)$ and $J=(j_1\cdots j_J)$ of tuples of pairwise distinct indices in $\{1,\dots,n+1\}$
the $\Ch_{\bbK[[\hbar]]}^{[-1,0]}$-enriched pseudo-natural transformation 
$t_{IJ}\,:\,\otimes_A^{n}~\Longrightarrow~\otimes_A^{n}$ which evaluates $t$ on the 
tensor product of the $I$-factors and the tensor product of the $J$-factors.
In this compact notation, the infinitesimal hexagons from \eqref{eqn:hexagoninf} 
and the $\gamma$-equivariance property from Definition \ref{def:inf2braidinggammaequivariant}
read as
\begin{flalign}\label{eqn:t_ij-identities}
t_{i(jk)}\,=\,t_{ij} + t_{ik}\quad,\qquad
t_{(ij)k}\,=\, t_{ik} + t_{jk}\quad,\qquad t_{ij} \,=\,t_{ji} \quad,
\end{flalign}
for all pairwise distinct $i,j,k\in\{1,\dots,n+1\}$.
Furthermore, one verifies that 
\begin{flalign}\label{eqn:t_ij-t_kl-relations}
[t_{ij},t_{kl}]\,:=\,t_{ij}\circ t_{kl} - t_{kl}\circ t_{ij}\,=\,0\quad,
\end{flalign}
for all pairwise distinct $i,j,k,l\in\{1,\dots,n+1\}$, by explicitly computing the components.
\sk

Following the same strategy as in \cite{Cartier} and \cite[Chapters XIX--XX]{Kassel}, 
we consider a candidate for a deformed associator and braiding
on ${}_A\CC[[\hbar]]$ which is given by the ansatz
\begin{subequations}\label{eqn:deformedassociator+braiding}
\begin{flalign}\label{eqn:deformedassociator}
\alpha^\hbar\,:=\, \Phi(t_{12},t_{23})\,:\,\otimes_A^2~\Longrightarrow~\otimes_A^2\,:\, 
{}_A\CC[[\hbar]]\boxtimes {}_A\CC[[\hbar]]\boxtimes {}_A\CC[[\hbar]]~\longrightarrow~{}_A\CC[[\hbar]]
\end{flalign}
and
\begin{flalign}\label{eqn:deformedbraiding}
\gamma^\hbar\,:=\gamma\circ e^{\frac{\hbar}{2} t}\,:\, 
\otimes_A~\Longrightarrow~\otimes_A\ast\tau\,:\, 
{}_A\CC[[\hbar]]\boxtimes {}_A\CC[[\hbar]]~\longrightarrow~{}_A\CC[[\hbar]]\quad,
\end{flalign}
\end{subequations}
where $\Phi(\mathfrak{a},\mathfrak{b})$ is some formal power series in 
two non-commutative variables $\mathfrak{a}$ and $\mathfrak{b}$ satisfying $\Phi(0,0)=1$.
In these expressions, both $\Phi$ and the exponential function $e$, which we describe by its formal Taylor series,
are evaluated by using the $\circ$-composition of pseudo-natural transformations from 
Appendix \ref{app:ChCat}, hence $\alpha^\hbar$ and $\gamma^\hbar$ are by construction
$\Ch_{\bbK[[\hbar]]}^{[-1,0]}$-enriched pseudo-natural transformations.
\begin{propo}\label{propo:allorderdeformation}
The tuple $\big({}_A\CC[[\hbar]],\otimes_A,A,\alpha^\hbar,\lambda,\rho,\gamma^\hbar\big)$,
with deformed associator $\alpha^\hbar$ and braiding $\gamma^\hbar$ 
from \eqref{eqn:deformedassociator+braiding}, defines a semi-strict braided monoidal category 
object in $\dgCat_{\bbK[[\hbar]]}^{[-1,0],\mathrm{ps}}$  in the sense of Definition \ref{def:2TermBraidedCat}
if and only if all of the following hold true:
\begin{itemize}
\item[(1)] The identity
\begin{flalign}\label{eqn:Drinfeldassociator}
\Phi(t_{12},t_{23}+ t_{24})\circ \Phi(t_{13}+t_{23},t_{34})\,=\,
\Phi(t_{23},t_{34})\circ \Phi(t_{12}+t_{13},t_{24}+t_{34})\circ \Phi(t_{12},t_{23})
\end{flalign}
holds true strictly in the algebra $\big(\mathrm{PsNat}(\otimes_A^3,\otimes_A^3),\circ,\Id_{\otimes_A^3}\big)$
of $\Ch_{\bbK[[\hbar]]}^{[-1,0]}$-enriched pseudo-natural transformations with multiplication $\circ$.

\item[(2)] The identities
\begin{subequations}\label{eqn:hexagonhbar}
\begin{flalign}\label{eqn:hexagonhbar1}
\Phi(t_{23},t_{13})\circ e^{\frac{\hbar}{2}(t_{12}+t_{13})}\circ \Phi(t_{12},t_{23})
\,=\, e^{\frac{\hbar}{2}t_{13}}\circ \Phi(t_{12},t_{13})\circ e^{\frac{\hbar}{2}t_{12}}
\end{flalign}
and
\begin{flalign}\label{eqn:hexagonhbar2}
\Phi(t_{13},t_{12})^{-1} \circ e^{\frac{\hbar}{2}(t_{13}+t_{23})}\circ \Phi(t_{12},t_{23})^{-1}
\,=\, e^{\frac{\hbar}{2}t_{13}}\circ \Phi(t_{13},t_{23})^{-1}\circ e^{\frac{\hbar}{2}t_{23}}
\end{flalign}
\end{subequations}
hold true strictly in the algebra $\big(\mathrm{PsNat}(\otimes_A^2,\otimes_A^2),\circ,\Id_{\otimes_A^2}\big)$
of $\Ch_{\bbK[[\hbar]]}^{[-1,0]}$-enriched pseudo-natural transformations with multiplication $\circ$.
\end{itemize}
Assuming additionally that $\Phi$ satisfies the inversion property 
$\Phi(\mathfrak{a},\mathfrak{b})^{-1}=\Phi(\mathfrak{b},\mathfrak{a})$, then the two identities
in \eqref{eqn:hexagonhbar} are equivalent.
\end{propo}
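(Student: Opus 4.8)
The plan is to obtain \eqref{eqn:hexagonhbar2} from \eqref{eqn:hexagonhbar1} (and conversely) by two reversible manipulations inside the algebra $\big(\mathrm{PsNat}(\otimes_A^2,\otimes_A^2),\circ,\Id\big)$, using only the hypothesis $\Phi(\mathfrak{a},\mathfrak{b})^{-1}=\Phi(\mathfrak{b},\mathfrak{a})$ and the relations $t_{ij}=t_{ji}$ from \eqref{eqn:t_ij-identities} (together with the fact that the exponents $t_{12}+t_{13}=t_{1(23)}$ and $t_{13}+t_{23}=t_{(12)3}$ are well-defined single transformations, so their exponentials are unambiguous). First I would use the inversion hypothesis to clear the three factors $\Phi(\,\cdot\,,\,\cdot\,)^{-1}$ occurring in \eqref{eqn:hexagonhbar2}, namely $\Phi(t_{13},t_{12})^{-1}=\Phi(t_{12},t_{13})$, $\Phi(t_{12},t_{23})^{-1}=\Phi(t_{23},t_{12})$ and $\Phi(t_{13},t_{23})^{-1}=\Phi(t_{23},t_{13})$. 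Since this is an identity of formal power series it remains valid after substitution, and each substituted series is a unit (its constant term is $\Phi(0,0)=1$, the same implicit assumption that makes the ansatz \eqref{eqn:deformedassociator} meaningful), so \eqref{eqn:hexagonhbar2} is equivalent to
\[
\Phi(t_{12},t_{13})\circ e^{\frac{\hbar}{2}(t_{13}+t_{23})}\circ \Phi(t_{23},t_{12})\,=\, e^{\frac{\hbar}{2}t_{13}}\circ \Phi(t_{23},t_{13})\circ e^{\frac{\hbar}{2}t_{23}}\quad.
\]

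Second I would act on \eqref{eqn:hexagonhbar1} by the algebra automorphism $F_\sigma$ of $\big(\mathrm{PsNat}(\otimes_A^2,\otimes_A^2),\circ,\Id\big)$ induced by relabelling the three tensor factors according to the transposition $\sigma=(1\,3)$; concretely $F_\sigma$ is conjugation by the canonical isomorphism $\otimes_A^2\cong\otimes_A^2\ast\sigma_\ast$ supplied by the symmetric structure of ${}_A\CC[[\hbar]]$, so it fixes $\circ$ and $\Id$, and by the definition \eqref{eqn:t_ij} of the $t_{ij}$ together with $t_{ij}=t_{ji}$ it sends $t_{12}\mapsto t_{23}$, $t_{23}\mapsto t_{12}$, $t_{13}\mapsto t_{13}$, hence $t_{12}+t_{13}\mapsto t_{13}+t_{23}$. (Equivalently, one may argue component-wise: \eqref{eqn:hexagonhbar1} is required to hold for every triple $(M_1,M_2,M_3)$, naturally in these objects and in homotopies, so one may rename $(M_1,M_2,M_3)\rightsquigarrow(M_3,M_2,M_1)$ and conjugate by the symmetric braiding.) Applying $F_\sigma$ term by term to \eqref{eqn:hexagonhbar1} produces precisely the displayed identity above. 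As both $F_\sigma$ and the use of the inversion hypothesis are invertible operations, this yields $\eqref{eqn:hexagonhbar1}\Leftrightarrow\eqref{eqn:hexagonhbar2}$.

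I do not expect a genuine obstacle here; the only points requiring care are (i) checking that $F_\sigma$ is an algebra \emph{automorphism}---in particular that, being conjugation by an isomorphism, it preserves the order of $\circ$-composition---and that it acts on the generators $t_{ij}$ as claimed, which is immediate from \eqref{eqn:t_ij} and \eqref{eqn:t_ij-identities}; and (ii) bookkeeping of which argument pairs of $\Phi$ get swapped under the inversion hypothesis. Note that, unlike for the pentagon-type identity \eqref{eqn:Drinfeldassociator}, this particular equivalence uses neither the commutation relations \eqref{eqn:t_ij-t_kl-relations} nor the infinitesimal hexagons beyond the symmetry $t_{ij}=t_{ji}$.
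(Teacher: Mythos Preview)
Your proposal addresses only the final sentence of the proposition---the equivalence of \eqref{eqn:hexagonhbar1} and \eqref{eqn:hexagonhbar2} under the inversion hypothesis---and ignores the main ``if and only if'' claim entirely. The bulk of the proposition asserts that the coherence conditions from Definition~\ref{def:2TermBraidedCat} (triangle, pentagon, two hexagons) for the deformed structure $(\alpha^\hbar,\gamma^\hbar)$ are equivalent to the algebraic identities (1) and (2). The paper handles this by first observing that the triangle identity is automatic (since $\Phi(0,0)=1$ and all components of $t$ involving the unit $A$ vanish by \eqref{eqn:t_ij-identities}), and then arguing, via the identities \eqref{eqn:t_ij-identities} and the same interchange-law considerations as in the proof of Proposition~\ref{prop:hexagoninf}, that the pentagon for $\alpha^\hbar$ rewrites as \eqref{eqn:Drinfeldassociator} and the two hexagons for $(\alpha^\hbar,\gamma^\hbar)$ rewrite as \eqref{eqn:hexagonhbar}. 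None of this appears in your proposal.

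For the part you do treat, your argument is correct and coincides with the paper's: the paper says one identifies \eqref{eqn:hexagonhbar1} and \eqref{eqn:hexagonhbar2} ``by permuting the indices according to $123\mapsto 321$'' together with \eqref{eqn:t_ij-identities}, which is exactly your conjugation automorphism $F_\sigma$ for $\sigma=(1\,3)$ combined with the inversion hypothesis. Your bookkeeping of the $\Phi$-arguments and the observation that only the symmetry $t_{ij}=t_{ji}$ is needed here are both accurate.
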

\begin{proof}
The triangle identity holds true automatically as a consequence of $\Phi(0,0)=1$
and the fact that all components of $t$ involving the monoidal unit $A\in{}_A\CC[[\hbar]]$ at least once 
are necessarily trivial. (The latter is a consequence of \eqref{eqn:t_ij-identities}.) Using \eqref{eqn:t_ij-identities}
and arguments for strictness of the interchange law for $\circ$ and $\ast$
which are similar to the ones in the proof of Proposition \ref{prop:hexagoninf}, one shows that (1) is equivalent 
to the pentagon identity for $\alpha^\hbar$. With the same kind of computation,
one shows that (2) is equivalent to the two hexagon identities for $\gamma^\hbar$ and $\alpha^\hbar$.
Finally, under the additional assumption $\Phi(\mathfrak{a},\mathfrak{b})^{-1}=\Phi(\mathfrak{b},\mathfrak{a})$,
one easily shows that \eqref{eqn:hexagonhbar1} and \eqref{eqn:hexagonhbar2} are identified
by permuting the indices according to $123\mapsto 321$ and using once again \eqref{eqn:t_ij-identities}.
\end{proof}

\begin{rem}\label{rem:allorderdeformation}
From a superficial perspective, the identities (1) and (2) in Proposition \ref{propo:allorderdeformation}
take exactly the same form as the defining properties of a Drinfeld associator $\Phi$,
see e.g.\ \cite[Eqns.~(24) and (25)]{Cartier} or \cite[Chapter XIX, Eqns.~(8.27)--(8.29)]{Kassel}.
However, there is the following caveat: The  analogous identities for a Drinfeld associator
are formulated in the universal enveloping algebra of the appropriate Drinfeld-Kohno Lie algebra
where the symbols $t_{ij}$ satisfy the infinitesimal braid relations
$t_{ij} = t_{ji}$, $[t_{ij},t_{kl}]=0$ and $[t_{ij},t_{ik}+t_{jk}]=0$, for all pairwise distinct indices $i,j,k,l$.
In contrast to this, our identities
(1) and (2) must hold true in the appropriate algebra of 
$\Ch_{\bbK[[\hbar]]}^{[-1,0]}$-enriched pseudo-natural transformations with multiplication $\circ$.
From \eqref{eqn:t_ij-identities} and \eqref{eqn:t_ij-t_kl-relations},
we know that our $t_{ij}$'s satisfy the first two types of infinitesimal braid relations,
i.e.\ $t_{ij} = t_{ji}$ and $[t_{ij},t_{kl}]=0$ for all pairwise distinct indices $i,j,k,l$,
but as we shall illustrate below they do \textit{not} necessarily satisfy the third
type of relation $[t_{ij},t_{ik}+t_{jk}]=0$ strictly. 
\sk

So the conclusion at this moment is as follows:
Assuming that our $\gamma$-equivariant infinitesimal $2$-braiding $t$ satisfies 
the remaining infinitesimal braid relations $[t_{ij},t_{ik}+t_{jk}]=0$ strictly, then the choice of 
a Drinfeld associator $\Phi$ (in the traditional sense) provides
via Proposition \ref{propo:allorderdeformation} a semi-strict
braided monoidal deformation (to all orders in $\hbar$) 
of the symmetric monoidal category object $\big({}_A\CC,\otimes_A,A,\alpha,\lambda,\rho,\gamma\big)$.
\end{rem}

In light of the previous remark, we make the following observation.
\begin{propo}\label{propo:Gamma4term}
The pseudo-naturality structure of the $\gamma$-equivariant infinitesimal $2$-braiding
$t:\otimes_A\Rightarrow\otimes_A$ induces $\Ch_{\bbK[[\hbar]]}^{[-1,0]}$-enriched modifications
\begin{subequations}\label{eqn:Gamma4term}
\begin{flalign}\label{eqn:Gamma4term1}
\Gamma_{ijk}\,:\,[t_{ij},t_{ik}+t_{jk}]~\xRrightarrow{\,~~\,}~0\,:\, \otimes_A^n~\Longrightarrow~\otimes_A^n\,:\,
{}_A\CC[[\hbar]]^{\boxtimes(n+1)}~\longrightarrow~{}_A\CC[[\hbar]]\quad,
\end{flalign}
for all pairwise distinct $i,j,k\in\{1,\dots,n+1\}$, which enforce weakly the remaining infinitesimal braid relations.
The components (recall Definition \ref{def:modi}) of these modifications are given explicitly by
\begin{flalign}\label{eqn:Gamma4term2}
\resizebox{.9\hsize}{!}{$
\begin{gathered}
\xymatrix@C=17em{
\ar[d]_-{\cong}\bigotimes\limits_{l=1}^{n+1} M_l \ar[r]^-{(\Gamma_{ijk})_{M_1,\dots,M_{n+1}}}~&~ \bigotimes\limits_{l=1}^{n+1} M_l\\
M_i\otimes_A M_j\otimes_A M_k\otimes_A \bigotimes\limits_{\substack{l=1\\l\neq i,j,k}}^{n+1} M_l
\ar[r]_-{t_{(M_i\otimes_A M_j,M_k),(M_i\otimes_A M_j,M_k)} (t_{M_i,M_j}\widetilde{\otimes}\id)\otimes_A\id}~&~
M_i\otimes_A M_j\otimes_A M_k\otimes_A \bigotimes\limits_{\substack{l=1\\l\neq i,j,k}}^{n+1} M_l\ar[u]_-{\cong}
}
\end{gathered}\quad,
$}
\end{flalign}
\end{subequations}
for all $M_1,\dots,M_{n+1}\in {}_A\CC[[\hbar]]$. 
\end{propo}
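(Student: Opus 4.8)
The plan is to construct the modifications $\Gamma_{ijk}$ explicitly and verify they satisfy the defining properties of a $\Ch_{\bbK[[\hbar]]}^{[-1,0]}$-enriched modification (recall Definition~\ref{def:modi}), then confirm they witness the relation $[t_{ij},t_{ik}+t_{jk}]\Rrightarrow 0$. First, note that the bracket $[t_{ij},t_{ik}+t_{jk}] = t_{ij}\circ(t_{ik}+t_{jk}) - (t_{ik}+t_{jk})\circ t_{ij}$ is a $\circ$-composition of pseudo-natural transformations, so by the composition formulas from Appendix~\ref{app:ChCat} its single-indexed components are determined by the single-indexed components of the $t_{ij}$'s, and its double-indexed (homotopy) components combine single- and double-indexed data. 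The key computational input is the first infinitesimal hexagon identity \eqref{eqn:hexagon1infexplicit}, which in the compact notation \eqref{eqn:t_ij-identities} reads $t_{i(jk)} = t_{ij}+t_{ik}$; using this together with $\gamma$-equivariance \eqref{eqn:inf2braidinggammaequivariant} and the already-established relations \eqref{eqn:t_ij-t_kl-relations}, one should be able to show that the \emph{single-indexed} component of $[t_{ij},t_{ik}+t_{jk}]$ vanishes on the nose — this is exactly the statement that the degree-$0$ level of a modification target is trivial — while the obstruction lives purely in degree $-1$, i.e.\ in the double-indexed homotopy data of the pseudo-natural transformations.

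The second step is to produce the claimed formula \eqref{eqn:Gamma4term2} and check it defines a modification. The candidate is obtained by applying the double-indexed component of $t$ with one slot fed by $t_{M_i,M_j}$ itself, i.e.\ the assignment $(\Gamma_{ijk})_{M_1,\dots,M_{n+1}}$ is (up to the symmetric-braiding isomorphisms $\cong$ that reorder tensor factors) the composite
\begin{flalign*}
\bigotimes_l M_l \;\xrightarrow{\;\cong\;}\; M_i\otimes_A M_j\otimes_A M_k\otimes_A \bigotimes_{l\neq i,j,k} M_l
\;\xrightarrow{\;t_{(M_i\otimes_A M_j,M_k),(M_i\otimes_A M_j,M_k)}(t_{M_i,M_j}\widetilde{\otimes}\id)\otimes_A\id\;}\;\cdots\;\xrightarrow{\;\cong\;}\;\bigotimes_l M_l\;.
\end{flalign*}
This is a degree $-1$ element of the appropriate morphism complex, as required of a modification. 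To verify it \emph{is} a modification one must check the compatibility condition from Definition~\ref{def:modi} relating $(\Gamma_{ijk})$ to the double-indexed components of its source $[t_{ij},t_{ik}+t_{jk}]$ and target $0$; since the target is the zero pseudo-natural transformation this reduces to showing that $\dd_{\hom_A}$ applied to $(\Gamma_{ijk})_{M_1,\dots,M_{n+1}}$ reproduces precisely the single-indexed component of $[t_{ij},t_{ik}+t_{jk}]$ — which we already argued is zero at the level of morphisms but need not be zero before passing to cohomology — and that the double-indexed naturality squares commute up to the $2$-homotopies identified with $0$ in ${}_A\CC[[\hbar]]$. Here the pseudo-functoriality coherences \eqref{eqn:compositioncoherences} are again trivial in all relevant instances, by the same mechanism as in the proof of Proposition~\ref{prop:hexagoninf} (all transformations but $t$ being strictly natural, and $t$ sending identities to $0$).

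The main obstacle I expect is bookkeeping: organizing the $\circ$-composition of $t_{ij}$ and $t_{ik}+t_{jk}$ into single- and double-indexed components cleanly, keeping track of the symmetric-braiding reorderings and Koszul signs that appear when one passes between the orderings $(ij)k$ and $i(jk)$, and identifying exactly which combination of double-indexed evaluations produces the stated formula \eqref{eqn:Gamma4term2} rather than a sign- or ordering-variant of it. A secondary point requiring care is verifying that the formula is independent of the bracketing convention $\otimes_A^2 \cong \otimes_A\ast(\otimes_A\boxtimes\id)\cong\otimes_A\ast(\id\boxtimes\otimes_A)$ used to suppress the associator $\alpha$; this should follow from the infinitesimal hexagons together with the hexagon identities for the original symmetric braiding $\gamma$, exactly as in the proof of Theorem~\ref{theo:mainresult}, but it must be checked. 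Once the component formula is in hand, the modification axioms are a direct if tedious computation of the kind already carried out in Lemma~\ref{lem:xipseudonatural}, using repeatedly that $2$-homotopies are identified with $0$ in the truncated morphism complexes of ${}_A\CC[[\hbar]]$.
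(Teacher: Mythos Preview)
Your plan contains a genuine error and misses the key structural step. You claim that the single-indexed component of $[t_{ij},t_{ik}+t_{jk}]$ ``vanishes on the nose'', but this is precisely what is \emph{not} true and what necessitates the modification in the first place: property~(1) of Definition~\ref{def:modi} requires $\big([t_{ij},t_{ik}+t_{jk}]\big)_{M_1,\dots,M_{n+1}} = \dd_{\hom_A}\big((\Gamma_{ijk})_{M_1,\dots,M_{n+1}}\big)$, i.e.\ the single-indexed component is exact, not zero. Your later phrasing (``zero at the level of morphisms but need not be zero before passing to cohomology'') does not repair this, since no quotient to cohomology is taken in degree~$0$ of the truncated complexes ${}_A\CC(M,N)$.

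The step you are missing is the rewriting the paper performs first: using the infinitesimal hexagon in the form $t_{(ij)k}=t_{ik}+t_{jk}$ (not $t_{i(jk)}=t_{ij}+t_{ik}$, which you cite), one obtains $[t_{ij},t_{ik}+t_{jk}] = [t_{ij},t_{(ij)k}] = t_{ij}\circ t_{(ij)k} - t_{(ij)k}\circ t_{ij}$. Now $t_{(ij)k}$ is pseudo-natural in its \emph{left} entry $M_i\otimes_A M_j$, so property~(1) of Definition~\ref{def:pseudonatural} applied to the morphism $t_{M_i,M_j}:M_i\otimes_A M_j\to M_i\otimes_A M_j$ directly exhibits the commutator of single-indexed components as $\dd_{\hom_A}$ of the pseudo-naturality homotopy $t_{(M_i\otimes_A M_j,M_k),(M_i\otimes_A M_j,M_k)}(t_{M_i,M_j}\widetilde{\otimes}\id)$. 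This is exactly the formula~\eqref{eqn:Gamma4term2}, and it explains why that formula has the shape it does. Without this rewriting you would be trying to find separate homotopies for $[t_{ij},t_{ik}]$ and $[t_{ij},t_{jk}]$, which do not individually exist; only their sum is controlled by pseudo-naturality, and only after packaging $t_{ik}+t_{jk}$ back into $t_{(ij)k}$. Once this is in hand, checking property~(2) of Definition~\ref{def:modi} is indeed the routine calculation you describe.
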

\begin{proof}
Recalling \eqref{eqn:t_ij-identities}, one observes that 
\begin{flalign}
[t_{ij},t_{ik}+t_{jk}] \,=\, [t_{ij},t_{(ij)k}] \,=\, t_{ij}\circ t_{(ij)k} -t_{(ij)k}\circ t_{ij}\quad.
\end{flalign}
Using the composition formulas from Appendix \ref{app:ChCat}, the component expressions
\eqref{eqn:t_ij} and pseudo-naturality of $t_{(ij)k}$ in the left entry, one computes 
the components of this $\Ch_{\bbK[[\hbar]]}^{[-1,0]}$-enriched pseudo-natural 
transformation and finds that 
\begin{flalign}
\big(t_{ij}\circ t_{(ij)k} -t_{(ij)k}\circ t_{ij}\big)_{M_1,\dots,M_{n+1}} \,=\,\dd_{\hom_A}\big((\Gamma_{ijk})_{M_1,\dots,M_{n+1}}\big)
\end{flalign}
with $\Gamma_{ijk}$ defined in \eqref{eqn:Gamma4term}. By a routine calculation
one then shows that $\Gamma_{ijk}$ satisfies the properties of a 
$\Ch_{\bbK[[\hbar]]}^{[-1,0]}$-enriched modification from Definition \ref{def:modi}.
\end{proof}

\begin{rem}
The reader might have noticed that there are potential ambiguities
in defining the $\Ch_{\bbK[[\hbar]]}^{[-1,0]}$-enriched modifications $\Gamma_{ijk}$.
These arise from the multiple equivalent ways to write the expression $[t_{ij},t_{ik}+t_{jk}]$
by using the identities in \eqref{eqn:t_ij-identities}. 
Such ambiguities do not seem to be present for our $\gamma$-equivariant infinitesimal $2$-braidings.
Indeed, one observes that
\begin{flalign}
\Gamma_{ijk} \,=\,\Gamma_{jik}
\end{flalign}
is symmetric in the first two indices, hence there are no ambiguities
arising from rewriting $[t_{ij},t_{ik}+t_{jk}] = [t_{ji},t_{jk}+t_{ik}]$ by using
the symmetry property $t_{ij} =t_{ji}$.
Furthermore, rewriting $[t_{ij},t_{ik}+t_{jk}] = [t_{ij},t_{(ij)k}] = [t_{ij},t_{k(ij)}]$
by using the identities \eqref{eqn:t_ij-identities}, one finds 
from the pseudo-naturality structure of $t_{k(ij)}$ in the right entry
a modification which agrees with $\Gamma_{ijk}$.
\end{rem}

\begin{rem}\label{rem:outlookcoherence}
For the complete description of a categorified version
of the infinitesimal braid relations, one would also 
require suitable coherence conditions for the modifications $\Gamma_{ijk}$.
One could then attempt to weaken the strict identities (1) and (2) 
from Proposition \ref{propo:allorderdeformation} to identities which hold true up
to suitable modifications, playing the role of a pentagonator and hexagonators
for a weakened variant of the semi-strict braided monoidal category objects from 
Definition \ref{def:2TermBraidedCat}. To the best of our knowledge, this 
is relatively unexplored territory, with the notable exception given by 
exploratory works of Cirio and Faria Martins \cite{Joao}. 
We will not attempt to address and solve these issues in the present paper.
\end{rem}

\begin{ex}\label{ex:Gammaexplicit}
For our infinitesimal $2$-braidings $t$ in Theorem \ref{theo:mainresult},
which are constructed from $2$-shifted Poisson structures $\pi$ on the CDGA $A$, we can use
the formulas from Remark \ref{rem:tfromPoissonexplicitdescription} in order to provide 
a more explicit basis expression for the modifications $\Gamma_{ijk}$
in Proposition \ref{propo:Gamma4term}. Denoting $M=M_i$, $N=M_j$ and $L=M_k$,
we then find for the bottom horizontal arrow in \eqref{eqn:Gamma4term2}
\begin{flalign}
\nn & t_{(M\otimes_A N,L),(M\otimes_A N,L)} (t_{M,N}\widetilde{\otimes}\id)(\oone\otimes w_i\otimes u_q\otimes v_s)\\
&\qquad \,=\, -(-1)^{\vert w_j\vert\,(\vert \und{L}_{s}^{~t}\vert -1)}~
\big\langle \pi^{(2)},
s^{-1}\dd_{\dR}\big((t_{M,N})_{iq}^{~~jr}\big)\otimes_A s^{-1}\dd_{\dR}(\und{L}_{s}^{~t})
\big\rangle\otimes w_j\otimes u_r\otimes v_t\quad,\label{eqn:Gammaexplicit}
\end{flalign}
where the components $(t_{M,N})_{iq}^{~~jr}$ are defined in 
\eqref{eqn:tfromPoissonbasis1} and 
the second term in \eqref{eqn:tfromPoissonbasis2} vanishes because $k=\id:L\to L$
is the identity in the present case.
\end{ex}

To conclude this subsection, we carry out briefly an explicit study of the 
identities (1) and (2) in Proposition \ref{propo:allorderdeformation}
modulo $\hbar^3$ by taking as ansatz the traditional Drinfeld associator
\begin{flalign}
\Phi(\mathfrak{a},\mathfrak{b})\,=\,1 + \tfrac{\hbar^2}{24}\,[\mathfrak{a},\mathfrak{b}]\quad \mathrm{mod}\,\hbar^3\quad,
\end{flalign}
see e.g.\ \cite[Corollary XIX.6.5]{Kassel}. Using the 
identity \eqref{eqn:t_ij-t_kl-relations}, one immediately checks 
that the identity \eqref{eqn:Drinfeldassociator} holds true strictly modulo $\hbar^3$, 
i.e.\ without the need for any modifications.
In contrast to this, the identities \eqref{eqn:hexagonhbar} do not hold true strictly modulo $\hbar^3$.
Concretely, bringing all terms in \eqref{eqn:hexagonhbar1} to the left-hand side, one finds that
\begin{subequations}\label{eqn:higherordermodifications}
\begin{flalign}
\tfrac{\hbar^2}{24}\big([t_{12}, t_{13}+ t_{23}] - [t_{13},t_{12}+ t_{32}]\big)
~\xRrightarrow{\,\tfrac{\hbar^2}{24}(\Gamma_{123}-\Gamma_{132})\,}~0
\end{flalign} 
only holds true weakly up to a certain linear combination of the $\Ch_{\bbK[[\hbar]]}^{[-1,0]}$-enriched modifications
from Proposition \ref{propo:Gamma4term}. Similarly, bringing all terms in \eqref{eqn:hexagonhbar2} to the left-hand side,
one finds that
\begin{flalign}
\tfrac{\hbar^2}{24}\big([t_{23}, t_{21}+ t_{31}] - [t_{13},t_{12}+ t_{32}]\big)
~\xRrightarrow{\,\tfrac{\hbar^2}{24}(\Gamma_{231}-\Gamma_{132})\,}~0
\end{flalign}
\end{subequations}
only holds true weakly up to a certain linear combination of the $\Ch_{\bbK[[\hbar]]}^{[-1,0]}$-enriched modifications
from Proposition \ref{propo:Gamma4term}. This illustrates that the strict identities
from Proposition \ref{propo:allorderdeformation}, and the associated concept of semi-strict
braided monoidal category objects from Definition \ref{def:2TermBraidedCat}, is highly restrictive
even at order $\hbar^2$ and hence they should be relaxed to weaker concepts
by allowing for non-trivial pentagonator and hexagonator modifications.


\section{Examples}
\label{sec:examples}
The aim of this section is to study explicit examples of our 
infinitesimal $2$-braidings from Theorem \ref{theo:mainresult}.
The examples we consider are based on Lie $N$-algebras,
for some natural number $N\geq 1$, which are a special class of 
$L_\infty$-algebras whose underlying cochain complex
\begin{flalign}\label{eqn:LieNcomplex}
L\,=\,\Big(
\xymatrix{
L^{-N+1}\ar[r]^-{\dd_L}~&~ L^{-N+2} \ar[r]^-{\dd_L}~&~\cdots \ar[r]^-{\dd_L}~&~L^{-1} \ar[r]^-{\dd_L}~&~ L^0 
}\Big)\,\in\,\Ch_\bbK
\end{flalign}
is concentrated in non-positive cohomological degrees.
We shall always assume that the vector spaces $L^i$ are finite-dimensional, for all $i$.
The Lie $N$-algebra structure on $L$ is given by a family of 
brackets $\ell_n \in\hom(\wedge^n L,L)^{2-n}$
of degree $2-n$, for all $n\geq 2$, which have to satisfy the
homotopy Jacobi identities, see e.g.\ \cite{Schnitzer} for an excellent review.
By a simple degree counting argument, one observes that $\ell_n=0$ must 
necessarily vanish for all $n> 1+N$, hence the family of 
brackets $\{\ell_n\}_{n\geq 2}$ is bounded for every Lie $N$-algebra.
\sk

Associated to a Lie $N$-algebra $L= (L,\{\ell_n\})$ 
is its Chevalley-Eilenberg CDGA $\CE^\bullet(L)$. The underlying 
$\bbZ$-graded commutative algebra of $\CE^\bullet(L)$ is given by
\begin{flalign}\label{eqn:CE}
\CE^\sharp(L)\,:=\,\Sym\big(L^{\sharp\ast}[-1]\big)
\end{flalign}
and the differential $\dd_{\CE} := \dd_{L^\ast[-1]} + \sum_{n\geq 2} 
\ell_n^{\ast\,[-1]}$ is defined on the generators $L^{\sharp\ast}[-1]$ by dualizing and shifting
the brackets $\ell_n : L^{\sharp\otimes n}\to L^\sharp$.
The Chevalley-Eilenberg CDGA $\CE^\bullet(L)$ of a finite-dimensional 
Lie $N$-algebra is finitely generated and semi-free, hence it satisfies
our hypotheses from Section \ref{sec:DAG}. Note that this CDGA is concentrated in non-negative
cohomological degrees, which allows us to interpret $\CE^\bullet(L)$ 
geometrically as a stacky affine in the sense of \cite[Section 1]{PridhamNotes},
see also \cite{Pridham,PridhamOutline}. Such stacky affines carry only
a non-trivial stacky structure, but a trivial derived structure, which implies
that one can describe them by singly-graded objects, in contrast to the more general
doubly-graded stacky CDGAs from \cite{PridhamNotes,Pridham,PridhamOutline} or 
the graded mixed CDGAs from \cite{CPTVV} which carry both stacky and derived structures.
In the framework of stacky affines, ones interprets $\CE^\bullet(L)$ as the CDGA
of functions on the infinitesimal quotient stack $\mathrm{B}L:=[\mathrm{pt}/L]$, which is an infinitesimal
analogue of the classifying stack $\mathrm{B}G:=[\mathrm{pt}/G]$ of a (higher) group $G$.
\sk

The symmetric monoidal dg-category ${}_{\CE^\bullet(L)}\dgMod^{\mathrm{fgsf}}$
of finitely generated semi-free $\CE^\bullet(L)$-dg-modules
describes representations of the Lie $N$-algebra $L$ and their $\infty$-morphisms.
Our result in Theorem \ref{theo:mainresult} then implies that,
given any $2$-shifted Poisson structure on the CDGA $\CE^\bullet(L)$,
one obtains a first-order deformation of the homotopy $2$-category
$\mathsf{Ho}_2\big({}_{\CE^\bullet(L)}\dgMod^{\mathrm{fgsf}}\big)$ 
of the representation category of $L$ into a braided monoidal category.
Drawing analogies with the standard deformation theory 
of the (strict) representation category $\mathbf{Rep}(\g)$ of an ordinary Lie algebra $\g$ 
and its relationship to the quantum group $U_q\g$, see
e.g.\ \cite{Cartier} and \cite[Chapter XX]{Kassel}, one may interpret 
our deformation constructions as a first step towards a Lie
$N$-algebraic generalization of quantum groups.
\begin{rem}\label{rem:formalmoduli}
From the point of view of formal moduli problems \cite{Lurie} and
Koszul duality in the form reviewed for instance in \cite{Koszul},
it may also seem natural to consider the completed Chevalley-Eilenberg CDGA
$\widehat{\CE}^\bullet(L)$ whose underlying $\bbZ$-graded commutative algebra is given by
\begin{flalign}
\widehat{\CE}^\sharp(L)\,:=\, \widehat{\Sym}\big(L^{\sharp\ast}[-1]\big)\,:=\,\prod_{l\geq 0}
\Sym^l\big(L^{\sharp\ast}[-1]\big)\quad.
\end{flalign}
We would like to note that this completion of the Chevalley-Eilenberg CDGA 
in \eqref{eqn:CE} to formal power series in the generators does 
not alter the structure of our results in this section. The key reason for this is 
that the $2$-shifted Poisson structures on the completed
$\widehat{\CE}^\bullet(L)$ and on the uncompleted $\CE^\bullet(L)$ 
Chevalley-Eilenberg CDGA of a Lie $N$-algebra $L$ are naturally 
identified with each other: The completed $2$-shifted polyvectors on $\widehat{\CE}^\bullet(L)$ 
are given by
\begin{flalign}
\widehat{\Pol}^\sharp\big(\widehat{\CE}^\bullet(L),2\big)\,\cong\, 
\prod_{m,l\geq 0} \Sym^m\big(L^\sharp[-2]\big)\otimes \Sym^l\big(L^{\sharp\ast}[-1]\big)
\end{flalign}
and they differ from $\widehat{\Pol}^\sharp\big(\CE^\bullet(L),2\big)$ in Definition \ref{def:polycompleted}
by completing the direct sum $\bigoplus_{l\geq 0}$ to the direct product $\prod_{l\geq 0}$.
A $2$-shifted Poisson structure on the completed
$\widehat{\CE}^\bullet(L)$ is then given by a formal sum $\pi = \sum_{m\geq 2}\sum_{l\geq 0}\pi^{(m,l)}
\in \widehat{\Pol}\big(\widehat{\CE}^\bullet(L),2\big)^4$, which is also formal 
in $l\geq 0$, satisfying the Maurer-Cartan equation from 
Definition \ref{def:nshiftedpoisson}. By a simple degree counting argument, taking into account
the specific form of the underlying $N$-term complex  \eqref{eqn:LieNcomplex} of $L$, one observes
that the component $\pi^{(m,l)}$ can only be non-trivial if
$(-N+1)\,m \leq 4-2m-l \leq (N-1)\,l$. Hence, for every fixed $m\geq 2$, there are 
only finitely many $l\geq 0$ satisfying these bounds, which implies that 
$\pi = \sum_{m\geq 2}\sum_{l\geq 0}\pi^{(m,l)}
\in \widehat{\Pol}\big(\CE^\bullet(L),2\big)^4$ also defines a $2$-shifted Poisson structure on the uncompleted $\CE^\bullet(L)$.
\end{rem}

\subsection{Ordinary Lie algebras}
\label{subsec:Lie}
Let us start with the simplest
example which is given by the case where the Lie $N$-algebra $L$
is an ordinary (finite-dimensional) Lie algebra $\g=(\g,[\,\cdot\,,\,\cdot\,])$,
i.e.\ $N=1$. This means that 
the underlying $\bbZ$-graded vector space is $\g$, concentrated in degree $0$,
and all brackets but the Lie bracket $\ell_2:=[\,\cdot\,,\,\cdot\,] : \g\otimes \g\to \g$ are trivial.
To simplify our calculations below, we shall choose a basis $\{x_{a}\in\g:a=1,\dots,\dim(\g)\}$
and denote the structure constants by $[x_a,x_b]=f^{c}_{ab}\,x_c$. The Chevalley-Eilenberg
CDGA $\CE^\bullet(\g)$ is then given by the $\bbZ$-graded commutative algebra
\begin{subequations}
\begin{flalign}
\CE^\sharp(\g)\,=\,\bbK[\{\theta^a\}]
\end{flalign}
which is freely generated by $\dim(\g)$-many generators $\theta^a$
of degree $1$, for $a=1,\dots,\dim(\g)$, together with the differential
\begin{flalign}
\dd_{\CE}(\theta^a)\,=\,-\tfrac{1}{2} \,f^{a}_{bc}\,\theta^b\,\theta^c
\end{flalign}
\end{subequations}
which is determined by the structure constants. Here and in what follows we 
shall always use the standard summation convention according to which summations over repeated indices are suppressed.
Note that the square-zero condition $\dd_{\CE}\,\dd_{\CE}(\theta^a)=0$ follows from the Jacobi identity
of the Lie bracket after expressing the latter in terms of the structure constants.
\sk

Since the input datum for Theorem \ref{theo:mainresult} is a 
$2$-shifted Poisson structure on $\CE^\bullet(\g)$ in the sense
of Definition \ref{def:nshiftedpoisson}, we first have to 
understand and characterize the possible choices for such data.
For the $\CE^\bullet(\g)$-dg-module of derivations $\mathsf{T}_{\CE^\bullet(\g)}$
we choose the basis $\{\partial_{a}\in \mathsf{T}_{\CE^\bullet(\g)} : a=1,\dots,\dim(\g)\}$
which is defined by $\partial_a(\theta^b)=\delta_a^b$, for all $a,b$. Note that
each basis element $\partial_a$ has degree $-1$ since all $\theta^b$ are of degree $1$.
The underlying $\bbZ$-graded commutative algebra of the completed $2$-shifted polyvectors from Definition
\ref{def:polycompleted} is then given by
\begin{flalign}
\widehat{\Pol}\big(\CE^\bullet(\g),2\big)^\sharp\,=\,\bbK[\{\theta^a\}][[\{s^3 \partial_a\}]]\quad,
\end{flalign}
where $[[\,\cdot\,]]$ denotes formal power series. Note that the generators 
$s^3\partial_a$ are of degree $3-1=2$ in $\widehat{\Pol}\big(\CE^\bullet(\g),2\big)$.
\begin{propo}\label{propo:2shiftedPoissonLie}
Any $2$-shifted Poisson structure $\pi=\sum_{m\geq 2}\pi^{(m)}\in \widehat{\Pol}\big(\CE^\bullet(\g),2\big)^4$ 
on $\CE^\bullet(\g)$ is of the form
\begin{subequations}\label{eqn:piLie}
\begin{flalign}\label{eqn:piLie1}
\pi \,=\,\pi^{(2)}\,=\,\tfrac{1}{2}\,\pi^{ab}\, s^3\partial_a \,s^3\partial_b\quad, 
\end{flalign}
where $\pi^{ab}\in\bbK$  are coefficients which
must satisfy the $\g$-invariance property
\begin{flalign}\label{eqn:piLie2}
f_{cd}^a \,\pi^{d b} +f_{cd}^b\,\pi^{a d}\,=\,0\quad.
\end{flalign}
\end{subequations}
\end{propo}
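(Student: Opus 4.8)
The plan is to combine an elementary degree-counting argument with a short computation of the Maurer--Cartan equation.

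\emph{Step 1: degree counting.} First I would observe that, since the generators $\theta^a$ of $\CE^\sharp(\g)$ sit in degree $1$ and the generators $s^3\partial_a$ of $\widehat{\Pol}\big(\CE^\bullet(\g),2\big)$ sit in degree $2$, the weight-$m$ component $\widehat{\Sym}^m_{\CE^\bullet(\g)}\big(\mathsf{T}_{\CE^\bullet(\g)}[-3]\big)$ is concentrated in cohomological degrees $\geq 2m$ (the $\CE^\bullet(\g)$-coefficients can only raise the degree). Hence a polyvector of degree $4$ and weight $\geq 2$ can have a nonzero component only in weight exactly $2$, and that component must carry a coefficient of degree $4-4=0$, i.e.\ a scalar. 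Since the $s^3\partial_a$ are of even degree, this forces $\pi=\pi^{(2)}=\tfrac12\,\pi^{ab}\,s^3\partial_a\,s^3\partial_b$ for a symmetric array $\pi^{ab}\in\bbK$, which is the shape claimed in \eqref{eqn:piLie1}. (In particular the formal sum in Definition \ref{def:nshiftedpoisson} is automatically finite here.)

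\emph{Step 2: reduction of the Maurer--Cartan equation.} Since $\pi$ is of pure weight $2$, the term $\dd_{\widehat{\Pol}}(\pi)$ has weight $2$ while $\tfrac12\{\pi,\pi\}$ has weight $3$, so the two terms of the Maurer--Cartan equation lie in different weights and must vanish separately. I would then note that $\{\pi,\pi\}$ vanishes \emph{for free}: the Schouten--Nijenhuis bracket has degree $-3$ for $n=2$, so $\{\pi,\pi\}$ would be an element of degree $4+4-3=5$ and weight $3$, but weight-$3$ polyvectors are concentrated in degrees $\geq 6$ by Step 1. Hence the Maurer--Cartan equation is equivalent to the single cocycle condition $\dd_{\widehat{\Pol}}(\pi^{(2)})=0$, i.e.\ the first line of the tower in Remark \ref{rem:nshiftedpoisson}.

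\emph{Step 3: unpacking $\dd_{\widehat{\Pol}}(\pi^{(2)})=0$.} The differential on $\widehat{\Pol}\big(\CE^\bullet(\g),2\big)$ is the derivation extending $\dd_{\CE}$ on the coefficients and the shifted differential of $\mathsf{T}_{\CE^\bullet(\g)}$ on the generators. I would compute $\dd_{\mathsf{T}}(\partial_a)=[\dd_{\CE},\partial_a]$ on a generator $\theta^b$ using $\dd_{\CE}(\theta^b)=-\tfrac12 f^b_{cd}\theta^c\theta^d$ and the Leibniz rule for the degree $-1$ derivation $\partial_a$, obtaining (up to an overall sign coming from the shift $[-3]$) that $\dd_{\widehat{\Pol}}(s^3\partial_a)$ is proportional to $f^c_{ad}\,\theta^d\cdot s^3\partial_c$. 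Feeding this into $\dd_{\widehat{\Pol}}(\tfrac12\pi^{ab}\,s^3\partial_a\,s^3\partial_b)$ via the Leibniz rule of the $\mathbb{P}_4$-structure, using that $\pi^{ab}$ is a constant killed by $\dd_{\CE}$, and symmetrizing in the two polyvector slots, the condition $\dd_{\widehat{\Pol}}(\pi^{(2)})=0$ becomes $f^a_{cd}\,\pi^{db}+f^b_{cd}\,\pi^{ad}=0$, which is exactly \eqref{eqn:piLie2}; equivalently, $\pi$ regarded as an element of $\Sym^2\g$ is $\Ad$-invariant.

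\emph{Main obstacle.} The only genuinely delicate part is the bookkeeping of Koszul signs produced by the degree $-1$ derivations $\partial_a$, the shift $[-3]$, and the antisymmetry/derivation sign conventions for the $\mathbb{P}_4$-bracket from Subsection \ref{subsec:shiftedPoisson}; these signs affect the \emph{presentation} of Step 3 but not its content, since the invariance identity is equivalent to its own negative by antisymmetry of the structure constants $f^c_{ab}$. Everything else is rigidly forced by the very narrow degree window available in $\CE^\bullet(\g)$.
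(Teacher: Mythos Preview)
Your proof is correct and follows essentially the same strategy as the paper: degree counting to force the form \eqref{eqn:piLie1}, splitting the Maurer--Cartan equation into $\dd_{\widehat{\Pol}}(\pi)=0$ and $\{\pi,\pi\}=0$, and unpacking the former via $\dd_{\widehat{\Pol}}(s^3\partial_a)=-f^c_{ab}\,\theta^b\,s^3\partial_c$. The only minor variation is your justification for $\{\pi,\pi\}=0$: the paper observes directly that the bracket vanishes because $\pi$ has constant coefficients (so all evaluations $D(a)$ in \eqref{eqn:Schoutendef} are zero), whereas you deduce it from the degree bound of Step~1, which is an equally valid and arguably cleaner argument.
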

\begin{proof}
Using that $\theta^a$ is of degree $1$ and that $s^3\partial_a$ is of degree $2$, 
one finds by a simple degree-counting argument that \eqref{eqn:piLie1} is
the only possible form for a completed $2$-shifted polyvector of degree $4$ and weight $\geq 2$.
The Maurer-Cartan equation from Definition \ref{def:nshiftedpoisson} for such $\pi$,
see also Remark \ref{rem:nshiftedpoisson}, is equivalent to the two individual equations
$\dd_{\widehat{\Pol}}(\pi)=0$ and $\{\pi,\pi\}=0$. The second equation holds true automatically
because \eqref{eqn:piLie1} has constant coefficients, while the first equation
is equivalent to the $\g$-invariance property \eqref{eqn:piLie2}.
To see the latter point explicitly, one uses the Leibniz rule
for the differential $\dd_{\widehat{\Pol}}$ and 
$\dd_{\widehat{\Pol}}(s^3\partial_a) = -f_{ab}^{c}\,\theta^b\,s^3\partial_c$.
\end{proof}

\begin{rem}
Note that the $2$-shifted Poisson structures $\pi$ from 
Proposition \ref{propo:2shiftedPoissonLie} 
are canonically identified with 
invariant symmetric tensors $\pi =\tfrac{1}{2}\,\pi^{ab}\,x_a\,x_b\in (\Sym^2\g)^\g$.
Hence, our results are compatible with the results of Safronov \cite{Safronov}
who has computed the $2$-shifted (and also the $1$-shifted) Poisson
structures on the classifying stack $\mathrm{B}G = [\mathrm{pt}/G]$ of an algebraic group $G$.
\end{rem}

Consider now the homotopy $2$-category
${}_{\CE^\bullet(\g)}\CC= \mathsf{Ho}_2\big({}_{\CE^\bullet(\g)}\dgMod^{\mathrm{fgsf}}\big)$
of the symmetric monoidal dg-category of finitely generated semi-free $\CE^\bullet(\g)$-dg-modules,
which is the object of focus in our deformation construction from Theorem \ref{theo:mainresult}.
Let us first illustrate the sense in which this category describes 
weak representations of the Lie algebra $\g$. By definition, given any object $M\in {}_{\CE^\bullet(\g)}\CC$,
one has a $\bbZ$-graded vector space of generators $W^\sharp$ such that $M^\sharp \cong \CE^\sharp(\g)\otimes W^{\sharp}$.
Choosing any basis $\{w_i\in W^\sharp\}$ for the generators, the differential on $M$
is completely characterized by specifying $\dd_{M}(\oone\otimes w_i)\in M$, for all $i$,
which when expanded in terms of the chosen bases reads as
\begin{flalign}\label{eqn:dMLieexpansion}
\dd_{M}(\oone\otimes w_i)\,=\, \und{M}_i^{~j}\otimes w_j \,=\,\sum_{n=0}^\infty \tfrac{1}{n!}\,M_{a_1\dots a_n i}^{\phantom{a_1\dots a_n i}j}
\,\theta^{a_1}\cdots\theta^{a_n}\otimes w_j\quad,
\end{flalign}
where $M_{a_1\dots a_n i}^{\phantom{a_1\dots a_n i}j}\in\bbK$ are coefficients. 
The square-zero condition $\dd_M\dd_{M}(\oone\otimes w_i)=0$ gives a tower of compatibility conditions
between the coefficients $M_{a_1\dots a_n i}^{\phantom{a_1\dots a_n i}j}$ and the structure constants $f^{a}_{bc}$ 
of the Lie algebra $\g$, which express a concept of Lie algebra representation up to coherent homotopies.
Strict representations of $\g$ on a cochain complex $W$ are recovered in the special case
where $\dd_M(\oone\otimes w_i) = M_{i}^{~j}\otimes w_j + M_{ai}^{~~j}\,\theta^a \otimes w_j$
is of polynomial degree $\leq 1$ in $\theta^a$. The first term in this expression defines
a differential $\dd_W$ on $W^\sharp$ and the second term defines a strict Lie algebra action map 
$\g \otimes W \to W$ which is a cochain map for the differential of $W = (W^\sharp,\dd_W)$.
\sk

In addition to weak representations of the Lie algebra $\g$, the homotopy $2$-category
${}_{\CE^\bullet(\g)}\CC= \mathsf{Ho}_2\big({}_{\CE^\bullet(\g)}\dgMod^{\mathrm{fgsf}}\big)$
also describes a concept of $\infty$-morphisms
between such weak representations, i.e.\ weakly $\g$-equivariant maps.
(The $2$-morphisms in ${}_{\CE^\bullet(\g)}\CC$ are homotopies between such $\infty$-morphisms.)
Indeed, a morphism $h : M\to M^\prime$ in ${}_{\CE^\bullet(\g)}\CC$ is by definition
a cochain map which commutes with the left $\CE^\bullet(\g)$-actions.
Hence, by choosing a basis $\{w_i\in W^\sharp\}$ for $M^\sharp \cong \CE^\sharp(\g)\otimes W^{\sharp}$ 
and a basis $\{w_i^\prime\in W^{\prime\sharp}\}$ for $M^{\prime\sharp} \cong \CE^\sharp(\g)\otimes W^{\prime\sharp}$,
we can expand 
\begin{flalign}\label{eqn:hLieexpansion}
h(\oone\otimes w_i)\,=\,\und{h}_{i}^{~j}\otimes w_j^\prime\,=\, \sum_{n=0}^\infty 
\tfrac{1}{n!}\,h_{a_1\dots a_n i}^{\phantom{a_1\dots a_n i}j}
\,\theta^{a_1}\cdots\theta^{a_n}\otimes w_j^\prime\quad,
\end{flalign}
where $h_{a_1\dots a_n i}^{\phantom{a_1\dots a_n i}j}\in\bbK$ are coefficients.
The cochain map condition $\dd_{M^\prime} \big(h(\oone\otimes w_i)\big) = h\big(\dd_M(\oone\otimes w_i)\big)$ 
gives a tower of compatibility conditions
between the coefficients $h_{a_1\dots a_n i}^{\phantom{a_1\dots a_n i}j}$ 
and the structure constants $f^{a}_{bc}$ of the Lie algebra $\g$, 
which express a concept of $\g$-equivariance up to coherent homotopies.
Strict morphisms $h : M\to M^\prime$ between strict $\g$-representations 
on cochain complexes $W$ and $W^\prime$ are recovered in the special case
where $h(\oone\otimes w_i) = h_{i}^{~j}\otimes w^\prime_j$ is constant in
$\theta^a$. 
\sk

The following result is a direct consequence of Theorem \ref{theo:mainresult}
and Proposition \ref{propo:2shiftedPoissonLie}.
\begin{cor}\label{cor:gLiedeformation}
Every invariant symmetric tensor 
$\pi =\tfrac{1}{2}\,\pi^{ab}\,x_a\,x_b\in (\Sym^2\g)^\g$
defines via Proposition \ref{propo:2shiftedPoissonLie} 
and Theorem \ref{theo:mainresult} a $\gamma$-equivariant infinitesimal 
$2$-braiding $t$ on the homotopy $2$-category ${}_{\CE^\bullet(\g)}\CC$ of the
symmetric monoidal dg-category of finitely generated semi-free $\CE^\bullet(\g)$-dg-modules.
\end{cor}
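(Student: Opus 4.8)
\emph{Proof proposal.} The plan is to obtain the statement as a direct specialization of the two cited results, so the work consists almost entirely in chaining them together. First I would use Proposition~\ref{propo:2shiftedPoissonLie} to translate the given datum --- an invariant symmetric tensor $\pi=\tfrac12\pi^{ab}x_ax_b\in(\Sym^2\g)^\g$ --- into an honest $2$-shifted Poisson structure on $\CE^\bullet(\g)$: one sets $\pi^{(2)}:=\tfrac12\pi^{ab}\,s^3\partial_a\,s^3\partial_b$ and $\pi^{(m)}:=0$ for all $m\geq 3$. Proposition~\ref{propo:2shiftedPoissonLie} guarantees that this element lies in $\widehat{\Pol}\big(\CE^\bullet(\g),2\big)^4$ and has weight $\geq 2$, and that its Maurer--Cartan equation (Definition~\ref{def:nshiftedpoisson}) reduces precisely to the $\g$-invariance relation $f^a_{cd}\,\pi^{db}+f^b_{cd}\,\pi^{ad}=0$, which holds by hypothesis on $\pi$. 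Hence $\pi$ is a legitimate input for the construction carried out in Subsection~\ref{subsec:construction}.

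Second, I would observe that $\CE^\bullet(\g)$ satisfies the standing hypotheses under which Theorem~\ref{theo:mainresult} is stated, namely that it is a finitely generated semi-free CDGA over $\bbK$: as recorded around \eqref{eqn:CE}, its underlying graded commutative algebra $\CE^\sharp(\g)=\bbK[\{\theta^a\}]$ is free on the $\dim(\g)$ many degree-$1$ generators $\theta^a$, so $\CE^\bullet(\g)$ is finitely generated and semi-free in the sense of Definition~\ref{def:semifreeCDGA}. Theorem~\ref{theo:mainresult} then applies verbatim with $A=\CE^\bullet(\g)$ and produces, from the bivector component $\pi^{(2)}$, the $\Ch_\bbK^{[-1,0]}$-enriched pseudo-natural transformation $t:\otimes_A\Rightarrow\otimes_A$ of \eqref{eqn:infinitesimalbraidingfrom2shifted}, which by the conclusion of that theorem is automatically a $\gamma$-equivariant infinitesimal $2$-braiding on ${}_{\CE^\bullet(\g)}\CC=\mathsf{Ho}_2\big({}_{\CE^\bullet(\g)}\dgMod^{\mathrm{fgsf}}\big)$ in the sense of Definitions~\ref{def:inf2braiding} and \ref{def:inf2braidinggammaequivariant}. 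This establishes the claim. The only points requiring verification are that $\CE^\bullet(\g)$ meets the finiteness and semi-freeness hypothesis and that the $\g$-invariance condition is exactly the Maurer--Cartan equation --- both already supplied by the cited results --- so I do not expect any genuine obstacle here.

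Although it is not needed for the corollary itself, the natural next step, and the entry point for the comparison with the classical infinitesimal braiding on $\mathbf{Rep}(\g)$ carried out in the remainder of Subsection~\ref{subsec:Lie}, is to unwind the components of $t$ in the bases $\{\theta^a\}$ and $\{\partial_a\}$ via Remark~\ref{rem:tfromPoissonexplicitdescription}. Since $\pi^{(2)}$ has constant coefficients in the generators $s^3\partial_a$, the extended duality pairing \eqref{eqn:extendedpairing} simply contracts the de Rham differentials $\dd_\dR(\und M_i^{~j})$ and $\dd_\dR(\und N_q^{~r})$ appearing there against $\pi^{ab}$, yielding fully explicit formulas for the single- and double-indexed components of $t$.
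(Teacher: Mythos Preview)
Your proposal is correct and follows exactly the approach of the paper, which simply states that the corollary is ``a direct consequence of Theorem~\ref{theo:mainresult} and Proposition~\ref{propo:2shiftedPoissonLie}'' and provides no further proof. You have faithfully spelled out the chaining of these two results, including the verification that $\CE^\bullet(\g)$ is finitely generated semi-free, which the paper leaves implicit.
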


\begin{rem}\label{rem:gLiedeformation}
Let us recall that explicit formulas for the components of the 
$\gamma$-equivariant infinitesimal $2$-braiding $t$ 
are given in \eqref{eqn:tfromPoissonbasis}. Note that the 
morphism components $t_{M,N}$ in \eqref{eqn:tfromPoissonbasis1}
involve the de Rham differential
\begin{flalign}
(\dd_{\dR}\otimes\id)\big(\dd_{M}(\oone\otimes w_i)\big)\,&=\, 
\sum_{n=1}^\infty \tfrac{1}{(n-1)!}\,M_{a_1\dots a_n i}^{\phantom{a_1\dots a_n i}j}
\,\theta^{a_1}\cdots\theta^{a_{n-1}}\dd_{\dR}(\theta^{a_n})\otimes w_j
\end{flalign}
of \eqref{eqn:dMLieexpansion}, as well as $(\dd_{\dR}\otimes\id)\big(\dd_{N}(\oone\otimes u_q)\big)$.
Hence, the components $t_{M,N}$ in \eqref{eqn:tfromPoissonbasis1} are in general 
non-constant polynomials in $\theta^a$, unless both $M$ and $N$ are strict $\g$-representations.
Similar, the homotopy components $t_{(M,N),(M^\prime,N^\prime)}(h\widetilde{\otimes}k)$ 
in \eqref{eqn:tfromPoissonbasis2} involve the de Rham differential
\begin{flalign}
(\dd_{\dR}\otimes\id)\big(h(\oone\otimes w_i)\big)\,&=\, 
\sum_{n=1}^\infty \tfrac{1}{(n-1)!}\,h_{a_1\dots a_n i}^{\phantom{a_1\dots a_n i}j}
\,\theta^{a_1}\cdots\theta^{a_{n-1}}\dd_{\dR}(\theta^{a_n})\otimes w^\prime_j
\end{flalign}
of \eqref{eqn:hLieexpansion}, as well as
$(\dd_{\dR}\otimes\id)\big(k(\oone\otimes u_q)\big)$.
Hence, the homotopies $t_{(M,N),(M^\prime,N^\prime)}(h\widetilde{\otimes}k)$  
in \eqref{eqn:tfromPoissonbasis2} are in general non-trivial,
unless both $h$ and $k$ are strict morphisms.
\end{rem}

As a direct consequence of Proposition \ref{propo:allorderdeformation}
and Proposition \ref{propo:Gamma4term}, we obtain the following result.
\begin{cor}\label{cor:Lieallorder}
Every invariant symmetric tensor 
$\pi =\tfrac{1}{2}\,\pi^{ab}\,x_a\,x_b\in (\Sym^2\g)^\g$
defines via Corollary \ref{cor:gLiedeformation} and
Proposition \ref{propo:allorderdeformation}
a braided monoidal deformation to all orders in $\hbar$
of the full subcategory of ${}_{\CE^\bullet(\g)}\CC\in \dgCat_{\bbK[[\hbar]]}^{[-1,0],\mathrm{ps}}$ 
which consists of all $\CE^\bullet(\g)$-dg-modules corresponding to strict $\g$-representations.
\end{cor}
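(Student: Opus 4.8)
The plan is to restrict the $\gamma$-equivariant infinitesimal $2$-braiding $t$ of Corollary~\ref{cor:gLiedeformation} to the full subcategory $\mathcal{D}\subseteq {}_{\CE^\bullet(\g)}\CC$ whose objects are the $\CE^\bullet(\g)$-dg-modules corresponding to strict $\g$-representations, and to apply Proposition~\ref{propo:allorderdeformation} there. First I would check that $\mathcal{D}$ is a symmetric monoidal category object in $\dgCat_\bbK^{[-1,0],\mathrm{ps}}$: the monoidal unit $\CE^\bullet(\g)$ is strict, and inspecting the Leibniz rule for $\dd_{M\otimes_A N}$ on generators shows that a coefficient of $\theta$-degree $\le 1$ stays of $\theta$-degree $\le 1$, so $\mathcal{D}$ is closed under $\otimes_A$; since $\mathcal{D}$ is full, $t$ restricts to a $\gamma$-equivariant infinitesimal $2$-braiding on $\mathcal{D}[[\hbar]]:=\mathcal{D}\otimes\bbK[[\hbar]]$, and likewise each $t_{ij}$ restricts. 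By Remark~\ref{rem:allorderdeformation}, it then suffices to verify that on $\mathcal{D}[[\hbar]]$ the three infinitesimal braid relations $t_{ij}=t_{ji}$, $[t_{ij},t_{kl}]=0$ and $[t_{ij},t_{ik}+t_{jk}]=0$ hold \emph{strictly}: the first two already do so on all of ${}_{\CE^\bullet(\g)}\CC$ by \eqref{eqn:t_ij-identities} and \eqref{eqn:t_ij-t_kl-relations}, and for the third we must show that the obstructing modifications $\Gamma_{ijk}$ of Proposition~\ref{propo:Gamma4term} vanish on $\mathcal{D}$.

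The main step — and the one I expect to be the crux — is this vanishing of $\Gamma_{ijk}$ on $\mathcal{D}$. Here I would use the explicit component formula \eqref{eqn:Gammaexplicit}: its only nontrivial ingredient is $\dd_\dR\big((t_{M,N})_{iq}^{~~jr}\big)$ for objects $M,N$ of $\mathcal{D}$. For strict $\g$-representations, the coefficients $\und{M}_i^{~j},\und{N}_q^{~r}$ in \eqref{eqn:dMLieexpansion} are of polynomial degree $\le 1$ in the generators $\theta^a$, hence $s^{-1}\dd_\dR(\und{M}_i^{~j})$ and $s^{-1}\dd_\dR(\und{N}_q^{~r})$ are $\bbK$-linear combinations of the $s^{-1}\dd_\dR(\theta^a)$; since $\pi^{(2)}=\tfrac12\pi^{ab}s^3\partial_a s^3\partial_b$ has constant coefficients and $\langle\partial_c,\dd_\dR(\theta^a)\rangle=\delta_c^a\in\bbK$, the formula \eqref{eqn:tfromPoissonbasis1} together with the definition \eqref{eqn:extendedpairing} of the extended pairing gives that $(t_{M,N})_{iq}^{~~jr}\in\bbK$ is \emph{constant} in $\theta^a$ (this is precisely the phenomenon noted in Remark~\ref{rem:gLiedeformation}). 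Therefore $\dd_\dR\big((t_{M,N})_{iq}^{~~jr}\big)=0$, and by \eqref{eqn:Gammaexplicit} every component of $\Gamma_{ijk}$ on $\mathcal{D}$ vanishes. Feeding this back into the proof of Proposition~\ref{propo:Gamma4term} (where $\Gamma_{ijk}$ was built so that its $\dd_{\hom_A}$ reproduces the single-indexed components of $[t_{ij},t_{ik}+t_{jk}]$, while the modification axioms of Definition~\ref{def:modi} then control the double-indexed ones), the triviality of $\Gamma_{ijk}$ forces $[t_{ij},t_{ik}+t_{jk}]=0$ strictly as a $\Ch_{\bbK[[\hbar]]}^{[-1,0]}$-enriched pseudo-natural transformation on $\mathcal{D}[[\hbar]]$.

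Finally I would assemble the pieces. With all three infinitesimal braid relations holding strictly on $\mathcal{D}[[\hbar]]$, the assignment $t_{ij}\mapsto t_{ij}$ determines a Lie algebra homomorphism from the Drinfeld--Kohno Lie algebra $\mathfrak{t}_{n}$ into the commutator Lie algebra of $\big(\mathrm{PsNat}(\otimes_A^{n},\otimes_A^{n}),\circ\big)$ (for $A=\CE^\bullet(\g)$). Choosing any Drinfeld associator $\Phi$ — which exists since $\bbK$ has characteristic $0$, e.g.\ Drinfeld's even associator defined over $\bbQ$ — and applying this homomorphism to the associator's pentagon and hexagon relations produces exactly the identities \eqref{eqn:Drinfeldassociator} and \eqref{eqn:hexagonhbar}, so hypotheses (1) and (2) of Proposition~\ref{propo:allorderdeformation} are satisfied. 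Since $\mathcal{D}$ is itself a symmetric monoidal category object in $\dgCat_\bbK^{[-1,0],\mathrm{ps}}$ carrying the infinitesimal $2$-braiding $t$, the argument of Proposition~\ref{propo:allorderdeformation} applies verbatim with ${}_{\CE^\bullet(\g)}\CC$ replaced by $\mathcal{D}$, yielding the semi-strict braided monoidal category object $\big(\mathcal{D}[[\hbar]],\otimes_A,\CE^\bullet(\g),\alpha^\hbar,\lambda,\rho,\gamma^\hbar\big)$ with $\alpha^\hbar,\gamma^\hbar$ as in \eqref{eqn:deformedassociator+braiding}. This is the desired braided monoidal deformation of $\mathcal{D}$ to all orders in $\hbar$.
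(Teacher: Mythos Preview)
Your proof is correct and follows essentially the same approach as the paper: both argue that for strict $\g$-representations the coefficients $(t_{M,N})_{iq}^{~~jr}$ are constant in the $\theta^a$, hence $\dd_{\dR}\big((t_{M,N})_{iq}^{~~jr}\big)=0$, so by \eqref{eqn:Gammaexplicit} the modifications $\Gamma_{ijk}$ vanish and Proposition~\ref{propo:allorderdeformation} applies. Your version is more detailed (checking closure of $\mathcal{D}$ under $\otimes_A$, spelling out why $\Gamma_{ijk}=0$ forces the strict identity via the modification axioms, and noting the existence of a Drinfeld associator over $\bbK$), but the core argument is identical.
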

\begin{proof}
Combining our observations from
Example \ref{ex:Gammaexplicit} and Remark \ref{rem:gLiedeformation},
we obtain that the modifications $\Gamma_{ijk}$ in Proposition \ref{propo:Gamma4term}
are all trivial for strict $\g$-representations because in this case
$(t_{M,N})_{iq}^{~~jr}$ are constant polynomials in $\theta^a$, hence
$\dd_{\dR}\big((t_{M,N})_{iq}^{~~jr}\big)=0$. This implies that
the hypotheses of Proposition \ref{propo:allorderdeformation} are satisfied,
which yields the desired braided monoidal deformation to all orders in $\hbar$.
\end{proof}

\begin{rem}
Our result in Corollary \ref{cor:Lieallorder} is a homotopical
generalization of the deformation constructions in
\cite{Cartier} and \cite[Chapter XX]{Kassel}. We would like to stress that our generalization 
is non-trivial because, even though we restrict ourselves to strict $\g$-representations,
the morphisms in the \textit{full} subcategory from Corollary \ref{cor:Lieallorder}
are general (i.e.\ non-strict) $\infty$-morphisms between strict $\g$-representations. 
Recalling our observations from Remark \ref{rem:gLiedeformation}, this implies in particular
that the infinitesimal braiding $t$ is a genuine pseudo-natural transformation,
i.e.\ it has non-trivial homotopy components $t_{(M,N),(M^\prime,N^\prime)}(h\widetilde{\otimes}k)$,
and so are the associator and braiding of the deformed category from
Proposition \ref{propo:allorderdeformation} and Corollary \ref{cor:Lieallorder}.
\end{rem}

\subsection{String Lie \texorpdfstring{$2$}{2}-algebras}
\label{subsec:Lie2}
String Lie $2$-algebras are simple examples of Lie $2$-algebras
which arise as shifted central extensions 
determined by an ordinary (finite-dimensional) Lie algebra $\g=(\g,[\,\cdot\,,\,\cdot\,])$, concentrated
in degree $0$, and a $3$-cocycle $\kappa:\bigwedge^3\g\to\bbK$ on $\g$.
Such Lie $2$-algebras appeared first in \cite[Example 6.10]{BaezCrans}, 
however the name ``string Lie $2$-algebras'' has only been established later.
The resulting Lie $2$-algebra $\g_\kappa$
is given by the $\bbZ$-graded vector space defined by $\g_\kappa^0:=\g$,
$\g_\kappa^{-1}:=\bbK$ and $\g_\kappa^i:=0$, for all $i\in\bbZ\setminus \{-1,0\}$,
and the non-trivial brackets read as
\begin{flalign}\label{eqn:stringbrackets}
\ell_2(x,y) \,:=\,[x,y]\quad,\qquad \ell_3(x,y,z)\,:=\, \kappa(x,y,z)\quad,
\end{flalign}
for all $x,y,z\in\g$. Let us choose again a basis $\{x_a\in\g\}$ and denote the structure constants
by $[x_a,x_b]=f^c_{ab}\,x_c$ and $\kappa(x_a,x_b,x_c) = \kappa_{abc}$.
\sk

The Chevalley-Eilenberg CDGA $\CE^\bullet(\g_\kappa)$
of the string Lie $2$-algebra $\g_\kappa$
is then given by the $\bbZ$-graded commutative algebra
\begin{subequations}
\begin{flalign}
\CE^\sharp(\g_\kappa)\,=\,\bbK[\{\theta^a\},\nu]
\end{flalign}
which is freely generated by $\dim(\g)$-many generators $\theta^a$
of degree $1$, for $a=1,\dots,\dim(\g)$, and a single generator $\nu$
of degree $2$, together with the differential
\begin{flalign}
\dd_{\CE}(\theta^a)\,=\,-\tfrac{1}{2} \,f^{a}_{bc}\,\theta^b\,\theta^c\quad,\qquad
\dd_{\CE}(\nu)\,=\,-\tfrac{1}{3!}\,\kappa_{abc}\,\theta^a\,\theta^b\,\theta^c
\end{flalign}
\end{subequations}
which is determined by the structure constants. 
Note that the square-zero condition $\dd_{\CE}\,\dd_{\CE}(\theta^a)=0$ follows from the Jacobi identity
of the Lie bracket and $\dd_{\CE}\,\dd_{\CE}(\nu)=0$ 
follows from the $3$-cocycle condition of $\kappa$.
\sk

Let us now characterize the
$2$-shifted Poisson structures on $\CE^\bullet(\g_\kappa)$ in the sense
of Definition \ref{def:nshiftedpoisson}.
For the $\CE^\bullet(\g_\kappa)$-dg-module of derivations $\mathsf{T}_{\CE^\bullet(\g_\kappa)}$
we choose the basis $\{\{\partial_{a}\},\partial_{\nu}\}$
which is defined by $\partial_a(\theta^b)=\delta_a^b$, $\partial_a(\nu)=0$,
$\partial_\nu (\theta^a)=0$ and $\partial_\nu(\nu)=1$, for all $a,b$. Note that
each basis element $\partial_a$ has degree $-1$ and $\partial_\nu$ has degree $-2$.
The underlying $\bbZ$-graded commutative algebra of the completed $2$-shifted polyvectors from Definition
\ref{def:polycompleted} is then given by
\begin{flalign}
\widehat{\Pol}\big(\CE^\bullet(\g_\kappa),2\big)^\sharp\,=\,\bbK[\{\theta^a\},\nu][[\{s^3 \partial_a\},s^{3}\partial_\nu]]\quad.
\end{flalign}
where $[[\,\cdot\,]]$ denotes formal power series. Note that the generators 
$s^3\partial_a$ are of degree $3-1=2$ 
and that $s^3\partial_\nu$ is of degree $3-2=1$ 
in $\widehat{\Pol}\big(\CE^\bullet(\g_\kappa),2\big)$.
\begin{propo}\label{propo:2shiftedPoissonString}
Any $2$-shifted Poisson structure 
$\pi=\sum_{m\geq 2}\pi^{(m)}\in \widehat{\Pol}\big(\CE^\bullet(\g_\kappa),2\big)^4$ 
on $\CE^\bullet(\g_\kappa)$ is of the form
\begin{subequations}\label{eqn:piString}
\begin{flalign}\label{eqn:piString1}
\pi \,=\,\pi^{(2)}\,=\, \tfrac{1}{2}\,\pi^{ab}\, s^3\partial_a \,s^3\partial_b + \tilde{\pi}_b^a\,\theta^b\,s^3\partial_a\,s^3\partial_{\nu}\quad, 
\end{flalign}
where $\pi^{ab}\in\bbK$ and $\tilde{\pi}_b^a\in\bbK$ are coefficients
which must satisfy the properties
\begin{flalign}
f_{cd}^a\,\pi^{db} + f_{cd}^b\,\pi^{ad}\,&=\,0\quad, \label{eqn:piString2}\\
f_{bc}^d\,\tilde{\pi}^a_d - f_{bd}^a\,\tilde{\pi}^d_c + f_{cd}^a\,\tilde{\pi}^d_b -\kappa_{bcd}\,\pi^{da} \,&=\,0\quad,
\label{eqn:piString3}\\
\pi^{ac}\,\tilde{\pi}_c^b + \pi^{bc}\,\tilde{\pi}_c^a\,&\,=0\quad. \label{eqn:piString4}
\end{flalign}
\end{subequations}
\end{propo}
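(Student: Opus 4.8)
The plan is to follow the proof of Proposition \ref{propo:2shiftedPoissonLie} closely: first pin down the form \eqref{eqn:piString1} by a degree-and-weight count, then extract the coefficient conditions \eqref{eqn:piString2}--\eqref{eqn:piString4} from the weight-homogeneous pieces of the Maurer-Cartan equation. For the first step I would work in $\widehat{\Pol}\big(\CE^\bullet(\g_\kappa),2\big)^\sharp=\bbK[\{\theta^a\},\nu][[\{s^3\partial_a\},s^3\partial_\nu]]$, where $\theta^a$ has degree $1$ and weight $0$, $\nu$ has degree $2$ and weight $0$, $s^3\partial_a$ has degree $2$ and weight $1$, and $s^3\partial_\nu$ has degree $1$ and weight $1$. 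Since $s^3\partial_\nu$ has odd degree, graded-commutativity forces $(s^3\partial_\nu)^2=0$, so at most one factor $s^3\partial_\nu$ occurs in any monomial. Consequently a monomial of degree $4$ and weight $m\ge 2$ contains at least $m-1$ factors $s^3\partial_a$, contributing degree $\ge 2(m-1)$; for $m\ge 3$ this already exceeds $4$, so $\pi=\pi^{(2)}$, and running through the finitely many remaining possibilities with $m=2$ and degree $4$ leaves precisely the two monomial types in \eqref{eqn:piString1}.

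Next, since $\pi=\pi^{(2)}$ is weight-homogeneous of weight $2$, the term $\dd_{\widehat{\Pol}}(\pi)$ has weight $2$ while $\{\pi,\pi\}$ has weight $3$, so by Remark \ref{rem:nshiftedpoisson} the Maurer-Cartan equation of Definition \ref{def:nshiftedpoisson} is equivalent to the two separate equations $\dd_{\widehat{\Pol}}(\pi^{(2)})=0$ and $\{\pi^{(2)},\pi^{(2)}\}=0$ (all higher Maurer-Cartan conditions being automatic, as every $\pi^{(m)}$ with $m\ge 3$ vanishes). To evaluate the first I would compute $\dd_{\widehat{\Pol}}$ on generators: it restricts to $\dd_\CE$ on $\theta^a,\nu$; one finds $\dd_{\widehat{\Pol}}(s^3\partial_\nu)=0$ because $\nu$ does not occur in $\dd_\CE(\theta^a)$; and $\dd_{\widehat{\Pol}}(s^3\partial_a)$ has an $f$-term proportional to $\theta^c\,s^3\partial_b$, exactly as in Proposition \ref{propo:2shiftedPoissonLie}, together with a new $\kappa$-term proportional to $\theta^c\theta^d\,s^3\partial_\nu$ coming from $\dd_\CE(\nu)$. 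Expanding $\dd_{\widehat{\Pol}}(\pi^{(2)})$ by the Leibniz rule and sorting the result by monomial type, the part with two factors $s^3\partial_a$ reproduces the $\g$-invariance condition \eqref{eqn:piString2}, while the part with one factor $s^3\partial_a$ and one factor $s^3\partial_\nu$ combines the $\kappa$-term of $\dd_{\widehat{\Pol}}\big(\tfrac12\pi^{ab}s^3\partial_a s^3\partial_b\big)$ with the $f$-terms of $\dd_{\widehat{\Pol}}\big(\tilde\pi^a_b\theta^b s^3\partial_a s^3\partial_\nu\big)$ to yield \eqref{eqn:piString3}; the would-be $\kappa\tilde\pi$ contribution drops out because it carries $(s^3\partial_\nu)^2=0$.

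Finally I would compute $\{\pi^{(2)},\pi^{(2)}\}$ from the defining formulas \eqref{eqn:Schoutendef}. Since $[\partial_a,\partial_b]=0$ and $[\partial_a,\partial_\nu]=0$ as derivations, the only nonvanishing elementary bracket among the generators appearing in $\pi^{(2)}$ is $\{s^3\partial_a,\theta^b\}=\delta_a^b$, which contracts a $\theta$. Hence the self-bracket of $\tfrac12\pi^{ab}s^3\partial_a s^3\partial_b$ vanishes (constant coefficients), the self-bracket of $\tilde\pi^a_b\theta^b s^3\partial_a s^3\partial_\nu$ vanishes (it would produce $(s^3\partial_\nu)^2=0$), and only the mixed bracket survives, producing a term proportional to $\big(\pi^{ac}\tilde\pi^b_c+\pi^{bc}\tilde\pi^a_c\big)\,s^3\partial_a s^3\partial_b s^3\partial_\nu$ whose vanishing is exactly \eqref{eqn:piString4}. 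Conversely, any $\pi$ of the form \eqref{eqn:piString1} whose coefficients satisfy \eqref{eqn:piString2}--\eqref{eqn:piString4} visibly solves both equations, so the characterization is an equivalence. The main obstacle is purely a sign-bookkeeping matter: tracking the Koszul signs produced by the shift $[-3]$, by the odd generator $s^3\partial_\nu$, and by the degree-$(-3)$ Schouten bracket, and verifying that they assemble into precisely the combinations displayed in \eqref{eqn:piString2}--\eqref{eqn:piString4}; everything else is forced by the degree and weight grading.
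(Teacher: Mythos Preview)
Your proposal is correct and follows essentially the same route as the paper's own proof: a degree-and-weight count to pin down \eqref{eqn:piString1}, then the computation of $\dd_{\widehat{\Pol}}(s^3\partial_a)$ and $\dd_{\widehat{\Pol}}(s^3\partial_\nu)$ to extract \eqref{eqn:piString2}--\eqref{eqn:piString3}, and the Schouten bracket to obtain \eqref{eqn:piString4}. One tiny slip: for $m=3$ the bound $2(m-1)=4$ does not \emph{exceed} $4$; the correct observation is that the full weight-$m$ part has degree at least $2m-1$ (if one factor is $s^3\partial_\nu$) or $2m$ (if none), both of which exceed $4$ once $m\geq 3$.
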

\begin{proof}
The general form \eqref{eqn:piString1} for a 
completed $2$-shifted polyvector of degree $4$ and weight $\geq 2$
follows from a simple degree counting argument. (Note that 
products of the form $s^3\partial_{\nu}\,s^3\partial_{\nu}$
vanish in the CDGA $\widehat{\Pol}\big(\CE^\bullet(\g_\kappa),2\big)$ 
because $s^3\partial_{\nu}$ has degree $1$, hence $s^3\partial_{\nu}\,s^3\partial_{\nu} = 
-s^3\partial_{\nu}\,s^3\partial_{\nu} =0$.)
The Maurer-Cartan equation from Definition \ref{def:nshiftedpoisson} for such $\pi$,
see also Remark \ref{rem:nshiftedpoisson}, is equivalent to the two individual equations
$\dd_{\widehat{\Pol}}(\pi)=0$ and $\{\pi,\pi\}=0$. 
The first equation $\dd_{\widehat{\Pol}}(\pi)=0$ can be evaluated using
the Leibniz rule for the differential $\dd_{\widehat{\Pol}}$ and
\begin{flalign}
\dd_{\widehat{\Pol}}(s^3\partial_a)\,=\,-f_{ab}^c\,\theta^b\,s^3\partial_c +
\tfrac{1}{2}\,\kappa_{abc}\,\theta^b\,\theta^c\,s^3\partial_\nu\quad,\qquad 
\dd_{\widehat{\Pol}}(s^3\partial_\nu)\,=\,0\quad.
\end{flalign}
From this one finds the first two properties \eqref{eqn:piString2} and \eqref{eqn:piString3}.
The second equation $\{\pi,\pi\}=0$ can be evaluated
using the definition \eqref{eqn:Schoutendef} and properties \eqref{eqn:Schoutenproperties}
of the Schouten-Nijenhuis bracket, which leads to the third property \eqref{eqn:piString4}.
\end{proof}

\begin{rem}\label{rem:2shiftedPoissonString}
As a consequence of \eqref{eqn:piString2}, the first term in \eqref{eqn:piString1} is equivalent to the
datum of an invariant symmetric tensor $\pi =\tfrac{1}{2}\,\pi^{ab}\,x_a\,x_b\in(\Sym^2\g)^\g$
on the underlying Lie algebra $\g$ of the string Lie $2$-algebra $\g_\kappa$.
The second term in \eqref{eqn:piString1} is equivalent to a linear map 
$\tilde{\pi}:\g\to\g\,,~x_a\mapsto \tilde{\pi}_a^b\,x_b$
on the underlying Lie algebra. Property 
\eqref{eqn:piString4} then demands that the linear map $\tilde{\pi}$ annihilates
the invariant symmetric tensor $\pi$ according to
\begin{subequations}\label{eqn:piStringmaps}
\begin{flalign}\label{eqn:piStringmaps1}
(\tilde{\pi}\otimes \id_\g)(\pi) + (\id_\g\otimes\tilde{\pi})(\pi)\,=\,0\quad.
\end{flalign}
The remaining property \eqref{eqn:piString3} can be written equivalently as
\begin{flalign}\label{eqn:piStringmaps2} 
\tilde{\pi}\big([x,y]\big)\,=\, \big[\tilde{\pi}(x),y\big] + \big[x,\tilde{\pi}(y)\big] + 
\big(\kappa(x,y,-)\otimes\id_\g\big)(\pi) + \big(\id_\g\otimes \kappa(x,y,-)\big)(\pi)  \quad,
\end{flalign}
\end{subequations}
for all $x,y\in\g$, which means that the linear map $\tilde{\pi}:\g\to\g$ 
is a derivation of the underlying Lie algebra $\g$, up to corrections arising from
the $3$-cocycle $\kappa:\bigwedge \g^3\to\bbK$ and the invariant symmetric tensor $\pi\in(\Sym^2\g)^\g$.
\sk

Our $2$-shifted Poisson structures on $\CE^\bullet(\g_\kappa)$
are similar, but as it seems not identical, to the quasi-invariant symmetric
tensors on crossed modules of Lie algebras from \cite[Definition 30]{Joao}.
A direct comparison between these two structures is difficult because 
our preferred model for the string Lie $2$-algebra $\g_\kappa$ is given by
a finite-dimensional Lie $2$-algebra with a non-trivial $\ell_3$ bracket \eqref{eqn:stringbrackets}, 
while \cite{Joao} use a different model which is given by an
infinite-dimensional crossed module of Lie algebras 
that encodes the cocycle $\kappa$ less directly.
Our suspicion that these two concepts are slightly different
will be strengthened by the following example.
\end{rem}

\begin{ex}\label{ex:sl2}
In order to illustrate Proposition \ref{propo:2shiftedPoissonString},
let us consider the example where $\g = \mathfrak{sl}_2$ 
is the special linear Lie algebra of order $2$. We work with
the usual basis $\{x_+ ,x_- ,x_3\in\mathfrak{sl}_2\}$ 
whose Lie bracket relations are
\begin{flalign}
[x_+,x_-] \,=\,x_3\quad,\qquad [x_3,x_\pm]\,=\,\pm 2\,x_\pm\quad.
\end{flalign}
Using the rescaled Killing form 
$(\,\cdot\,,\,\cdot\,):\mathfrak{sl}_2\otimes\mathfrak{sl}_2\to\bbK$
whose non-vanishing values are given by $(x_3,x_3) =1$ and $(x_+,x_-)=(x_-,x_+)=\tfrac{1}{2}$,
we define the $3$-cocycle $\kappa(x,y,z):= (x,[y,z])$. In terms of our basis,
this $3$-cocycle is given by
\begin{flalign}
\kappa(x_+,x_-,x_3)\,=\,1
\end{flalign}
and total antisymmetry in its arguments.
This defines the string Lie $2$-algebra $(\mathfrak{sl}_2)_\kappa$.
\sk

Since $\mathfrak{sl}_2$ is a simple Lie algebra, there exists a unique (up to scale)
invariant symmetric tensor, which is concretely given by
\begin{flalign}\label{eqn:sl2invtensor}
\pi \,=\, \lambda\,\Big( x_+ \, x_- +\tfrac{1}{4}\,x_3^2\Big) \,\in\, (\Sym^2 \mathfrak{sl}_2)^{\mathfrak{sl}_2}\quad,
\end{flalign}
where $\lambda\in\bbK$ is any parameter. 
Assuming for the moment that $\lambda\neq 0$ is non-zero, the general solution
for the linear map $\tilde{\pi} : \mathfrak{sl}_2\to\mathfrak{sl}_2$
to the equation \eqref{eqn:piString4}, or 
the equivalent equation \eqref{eqn:piStringmaps1}, is given by
\begin{flalign}
\tilde{\pi}(x_+)\,=\,a\,x_+ + b\,x_3~~,\quad
\tilde{\pi}(x_-)\,=\,-a\,x_- + c\,x_3~~,\quad
\tilde{\pi}(x_3)\,=\,-2\,c\,x_+ -2\,b \,x_-\quad,
\end{flalign}
where $a,b,c\in\bbK$ are arbitrary parameters. Evaluating the equivalent form \eqref{eqn:piStringmaps2}
of the remaining equation \eqref{eqn:piString3} on $x=x_+$ and $y=x_-$, one finds 
that it does not admit any solutions for $\lambda\neq 0$. This implies that, 
for every $2$-shifted Poisson structure on $\CE^\bullet((\mathfrak{sl}_2)_\kappa)$ as in
Proposition \ref{propo:2shiftedPoissonString}, the 
invariant symmetric tensor \eqref{eqn:sl2invtensor} must necessarily be trivial $\pi=0$.
In this case the equations in \eqref{eqn:piString},
see also \eqref{eqn:piStringmaps} for their equivalent forms, simplify to the single condition
that $\tilde{\pi}:\mathfrak{sl}_2\to\mathfrak{sl}_2$ is a Lie algebra derivation,
i.e.\ $\tilde{\pi}([x,y]) = [\tilde{\pi}(x),y] + [x,\tilde{\pi}(y)]$,
for all $x,y\in\mathfrak{sl}_2$. Hence, the $2$-shifted Poisson structures on 
$\CE^\bullet((\mathfrak{sl}_2)_\kappa)$ are given by the Lie algebra 
derivations $\tilde{\pi}$ on $\mathfrak{sl}_2$. Taking into account
the natural concept of homotopies between shifted Poisson structures, 
see e.g.\ \cite{Safronov}, it follows that every $2$-shifted
Poisson structure on $\CE^\bullet((\mathfrak{sl}_2)_\kappa)$ 
is homotopic to the trivial one because every 
Lie algebra derivation of $\mathfrak{sl}_2$ is inner. We would like to thank
the anonymous reviewer for pointing out to us this important observation.
\sk

This behavior is drastically different to the case of ordinary Lie algebras $\g$ 
from Proposition \ref{propo:2shiftedPoissonLie} and the
quasi-invariant symmetric tensors on crossed modules of Lie algebras
from \cite[Theorem 36]{Joao} and \cite[Section 5.2]{Wagemann}.
\end{ex}

\begin{ex}\label{ex:Heisenberg}
The triviality result for $2$-shifted Poisson structures 
from Example \ref{ex:sl2} generalizes easily to the case
of the string Lie $2$-algebra of \textit{any} simple Lie algebra $\g$.
This motivates us to search for non-trivial $2$-shifted Poisson structures 
in the case where the underlying Lie algebra $\g$ is not simple.
Let us consider as a concrete example the Heisenberg Lie algebra, which is the $3$-dimensional
Lie algebra $\mathfrak{heis}=\bbK[x,y,z]$ with
the only non-trivial Lie bracket relations among the generators
given by $[x,y]=-[y,x] =z$. Choosing the $3$-cocycle
$\kappa(x,y,z) =1$, we obtain the associated  Heisenberg string Lie $2$-algebra $\mathfrak{heis}_\kappa$.
\sk

The most general invariant symmetric tensor on the Heisenberg Lie algebra is given by
\begin{subequations}\label{eqn:heisenberg}
\begin{flalign}
\pi\,=\, \tfrac{\lambda}{2}\,z^2\,\in\,(\Sym^2 \mathfrak{heis})^{\mathfrak{heis}}\quad,
\end{flalign}
where $\lambda\in \bbK$ is any parameter which we choose to be non-zero $\lambda\neq 0$.
The linear map $\tilde{\pi} : \mathfrak{heis}\to\mathfrak{heis}$ entering
the $2$-shifted Poisson structure then satisfies the required equation 
\eqref{eqn:piStringmaps1} if and only if $\tilde{\pi}(z) = 0$. Analyzing
the remaining equation \eqref{eqn:piStringmaps2}, one finds that $\tilde{\pi}$
must be of the form
\begin{flalign}
\tilde{\pi}(x) \,=\,a\,x+b\,y +c\,z~~,\quad
 \tilde{\pi}(y) \,=\,d\,x- (a+\lambda)\,y + e\,z~~,\quad
 \tilde{\pi}(z)\,=\,0\quad,
\end{flalign}
\end{subequations}
where $a,b,c,d,e\in\bbK$ are free parameters. 
This provides a multiparameter family of $2$-shifted
Poisson structures on the Chevalley-Eilenberg CDGA $\CE^\bullet(\mathfrak{heis}_\kappa)$ 
of the Heisenberg string Lie $2$-algebra $\mathfrak{heis}_\kappa$.
\end{ex}

The following result is a direct consequence of Theorem \ref{theo:mainresult},
Proposition \ref{propo:2shiftedPoissonString} and Remark \ref{rem:2shiftedPoissonString}.
\begin{cor}\label{cor:gStringdeformation}
Every pair $(\pi,\tilde{\pi})$ consisting of a 
invariant symmetric tensor $\pi =\tfrac{1}{2}\,\pi^{ab}\,x_a\,x_b\in (\Sym^2\g)^\g$
and a linear map $\tilde{\pi} : \g\to\g\,,~x_a\mapsto \tilde{\pi}_a^b\,x_b$
which satisfies the properties in \eqref{eqn:piStringmaps}
defines via Proposition \ref{propo:2shiftedPoissonString}
and Theorem \ref{theo:mainresult} a $\gamma$-equivariant infinitesimal 
$2$-braiding $t$ on the homotopy $2$-category ${}_{\CE^\bullet(\g_\kappa)}\CC$ of the
symmetric monoidal dg-category of finitely generated semi-free $\CE^\bullet(\g_\kappa)$-dg-modules.
\end{cor}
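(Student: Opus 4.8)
The plan is to obtain Corollary \ref{cor:gStringdeformation} purely by chaining together Proposition \ref{propo:2shiftedPoissonString}, Remark \ref{rem:2shiftedPoissonString} and Theorem \ref{theo:mainresult}; there is no new computation to perform, only a careful identification of the two parametrizations of the relevant data. First I would record that, since $\g_\kappa$ is a finite-dimensional Lie $2$-algebra, its Chevalley-Eilenberg CDGA $\CE^\bullet(\g_\kappa)$ is finitely generated and semi-free, as already noted after \eqref{eqn:CE}. Hence all hypotheses of Theorem \ref{theo:mainresult} are met for $A=\CE^\bullet(\g_\kappa)$, and it suffices to produce from the pair $(\pi,\tilde\pi)$ a bona fide $2$-shifted Poisson structure on $\CE^\bullet(\g_\kappa)$ in the sense of Definition \ref{def:nshiftedpoisson}.

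The second step is the translation. By Proposition \ref{propo:2shiftedPoissonString}, every $2$-shifted Poisson structure on $\CE^\bullet(\g_\kappa)$ has weight concentrated in $2$ and is of the explicit form \eqref{eqn:piString1}, i.e.\ $\pi=\pi^{(2)}=\tfrac12\,\pi^{ab}\,s^3\partial_a\,s^3\partial_b+\tilde\pi^a_b\,\theta^b\,s^3\partial_a\,s^3\partial_\nu$, with coefficients $\pi^{ab},\tilde\pi^a_b\in\bbK$ constrained by \eqref{eqn:piString2}--\eqref{eqn:piString4}. Remark \ref{rem:2shiftedPoissonString} then repackages these three index identities into invariant form: \eqref{eqn:piString2} is precisely $\g$-invariance of $\pi=\tfrac12\,\pi^{ab}\,x_a\,x_b$, so that $\pi\in(\Sym^2\g)^\g$; \eqref{eqn:piString4} is exactly \eqref{eqn:piStringmaps1}, the statement that the linear map $\tilde\pi:\g\to\g$, $x_a\mapsto\tilde\pi^b_a\,x_b$ annihilates $\pi$; and \eqref{eqn:piString3} is exactly \eqref{eqn:piStringmaps2}, the $\kappa$-twisted derivation property. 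Consequently, a pair $(\pi,\tilde\pi)$ satisfying the hypotheses \eqref{eqn:piStringmaps} of the corollary is literally the same datum as a $2$-shifted Poisson structure $\pi\in\widehat{\Pol}\big(\CE^\bullet(\g_\kappa),2\big)^4$ on $\CE^\bullet(\g_\kappa)$, via the assignment described above.

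The final step is to invoke Theorem \ref{theo:mainresult} for this $2$-shifted Poisson structure, which directly outputs a $\gamma$-equivariant infinitesimal $2$-braiding $t$ on ${}_{\CE^\bullet(\g_\kappa)}\CC=\mathsf{Ho}_2\big({}_{\CE^\bullet(\g_\kappa)}\dgMod^{\mathrm{fgsf}}\big)$, given concretely by \eqref{eqn:infinitesimalbraidingfrom2shifted}; note that, since $\pi=\pi^{(2)}$ has only a weight-$2$ component, the caveat in Remark \ref{rem:mainresult} is vacuous and the full Poisson datum enters the construction. I do not expect a genuine obstacle here. The only point requiring mild care is the bookkeeping in the dictionary of the second step, which is exactly the content of Remark \ref{rem:2shiftedPoissonString}, together with the observation (handled in Remark \ref{rem:formalmoduli}) that nothing is altered if one works with the completed rather than the uncompleted Chevalley-Eilenberg CDGA of the finite-dimensional Lie $2$-algebra $\g_\kappa$.
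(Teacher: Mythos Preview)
Your proposal is correct and follows exactly the same approach as the paper, which states the corollary as a direct consequence of Theorem \ref{theo:mainresult}, Proposition \ref{propo:2shiftedPoissonString} and Remark \ref{rem:2shiftedPoissonString} without further argument. Your additional remarks about Remark \ref{rem:mainresult} and Remark \ref{rem:formalmoduli} are accurate but not strictly needed.
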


\begin{rem}
Recalling the explicit formulas for the components of the 
$\gamma$-equivariant infinitesimal $2$-braiding $t$ 
from \eqref{eqn:tfromPoissonbasis}, one immediately observes 
by using also \eqref{eqn:piString1} that the morphism components $t_{M,N}$ 
are in general non-constant polynomials in $\theta^a$ 
in the case where $\tilde{\pi}^a_b\neq 0$ is non-trivial. 
(As illustrated in Example \ref{ex:Heisenberg}, this seems to be the most relevant case.)
Hence, the modifications $\Gamma_{ijk}$ from Proposition \ref{propo:Gamma4term} and Example
\ref{ex:Gammaexplicit} are in general non-trivial. This even holds true for
representation of the string Lie $2$-algebra $\g_\kappa$ which are strict
in the sense that the expansions as in \eqref{eqn:dMLieexpansion} 
of the differential $\dd_M$ are polynomials in $\theta^a$ and $\nu$ of degree $\leq 1$.
As a consequence, we currently do not have a meaningful generalization 
of the all-order deformation result from Corollary \ref{cor:Lieallorder}
to the case of $\CE^\bullet(\g_\kappa)$. Hence, further work as outlined in
Remark \ref{rem:outlookcoherence} is needed in order to extend
our result in Corollary \ref{cor:gStringdeformation} to a braided monoidal
deformation up to all orders in $\hbar$.
\end{rem}


\section*{Acknowledgments}
We would like to thank Jon Pridham and the anonymous reviewer for useful comments.
C.K.\ is supported by an EPSRC doctoral studentship. 
A.S.\ gratefully acknowledges the support of 
the Royal Society (UK) through a Royal Society University 
Research Fellowship (URF\textbackslash R\textbackslash 211015)
and Enhancement Grants (RF\textbackslash ERE\textbackslash 210053 and 
RF\textbackslash ERE\textbackslash 231077).


\appendix

\section{Higher-categorical framework}
\label{app:categorical}
In this appendix, we introduce a suitable 
higher-categorical framework in which the deformation constructions
from Section \ref{sec:2braidings} take place. 
The basic objects of our interest are categories 
which are enriched over the symmetric monoidal category $\Ch_R^{[-1,0]}$ 
of $2$-term cochain complexes (concentrated in degrees $-1$ and $0$) 
of modules over a commutative $\bbK$-algebra $R$. 
We will show below that these objects assemble
into a suitably weakened kind of $3$-category $\dgCat_R^{[-1,0],\mathrm{ps}}$ whose $1$-morphisms
are $\Ch_R^{[-1,0]}$-enriched functors, $2$-morphisms are 
$\Ch_R^{[-1,0]}$-enriched \textit{pseudo}-natural transformations and 
$3$-morphisms are $\Ch_R^{[-1,0]}$-enriched modifications. 
The crucial point here is that we allow for pseudo-natural transformations,
in contrast to only strictly natural ones, as these are required in our study
of infinitesimal $2$-braidings in Section \ref{sec:2braidings}. 
This $3$-category carries a symmetric monoidal
structure, which allows one to define a concept of $\Ch_R^{[-1,0]}$-enriched
braided monoidal categories whose associators, unitors and braidings are 
enriched \textit{pseudo}-natural transformations, as required in Section \ref{sec:2braidings}.
We assume that the reader has some familiarity with 
enriched category theory, see e.g.\ \cite{Kelly} for a comprehensive treatment, 
or \cite[Chapter 1.3]{Yau} and \cite[Chapter 3]{Riehl} for more concise introductions.

\subsection{\texorpdfstring{$3$}{3}-categories with pseudo-functorial composition}
\label{app:3Cat}
Let us recall that a strict $3$-category can be defined as a category which is
enriched over the category $\2Cat^{\mathrm{str}}$ of strict $2$-categories and strict $2$-functors.
We require a slightly weakened variant of this concept where the enrichment
category is generalized to the category $\2Cat^{\mathrm{ps}}$ of strict $2$-categories
and pseudo-functors. See e.g.\ \cite[Theorem 4.1.30]{Yau} for a detailed
description of this category and, in particular, of its strictly associative and unital composition.
The category $\2Cat^{\mathrm{ps}}$ admits finite products and a terminal object,
given by the $2$-category $\mathbf{1}\in \2Cat^{\mathrm{ps}}$ consisting of a single $0$-cell
and its identity $1$-cell and $2$-cell, hence it is symmetric monoidal with respect to the 
Cartesian symmetric monoidal structure.
\begin{defi}\label{def:3Cat}
We denote by 
\begin{flalign}
\3Cat\,:=\,\2Cat^{\mathrm{ps}}\mhyphen\Cat
\end{flalign}
the $2$-category whose
\begin{itemize}
\item objects are $\2Cat^{\mathrm{ps}}$-enriched categories,
\item $1$-morphisms are $\2Cat^{\mathrm{ps}}$-enriched functors, and
\item $2$-morphisms are $\2Cat^{\mathrm{ps}}$-enriched natural transformations.
\end{itemize}
\end{defi}

To illustrate how an object $\C\in \3Cat$ admits an interpretation
in terms of a weakened kind of $3$-category, let us spell out, in some detail, 
the data defining $\C$. By definition of an enriched category, $\C$ has a class 
of objects and, for each pair of objects $c,c^\prime\in\C$, a strict $2$-category
$\C(c,c^\prime)\in\2Cat^{\mathrm{ps}}$ of morphisms. Since the $2$-category
$\C(c,c^\prime)$ consists of $0$-cells, $1$-cells and $2$-cells, there are three layers
of morphisms in $\C\in \3Cat$, as required for a $3$-category. The enriched category $\C$
comes further with identity morphisms $\id_c : \mathbf{1}\to \C(c,c)$, or equivalently
$0$-cells $\id_c\in \C(c,c)$, and composition morphisms
\begin{flalign}
\ast \,:\, \C(c^\prime,c^{\prime\prime})\times \C(c,c^{\prime}) ~\longrightarrow~\C(c,c^{\prime\prime})
\end{flalign}
in $\2Cat^{\mathrm{ps}}$, which in our case are pseudo-functors
between strict $2$-categories. The compositions $\ast$ must be strictly associative 
and strictly unital with respect to the identities $\id_c$.
Note that the relaxation to pseudo-functors makes our variant of $3$-categories 
from Definition \ref{def:3Cat} more general than strict $3$-categories, 
for which the analogous composition morphisms $\ast$ must be strict $2$-functors.
\sk

The $2$-category $\3Cat$ carries a symmetric monoidal structure
which is determined by the Cartesian symmetric monoidal structure of the enrichment category $\2Cat^{\mathrm{ps}}$,
see e.g.\ \cite[Section 1.4]{Kelly} for the details. The monoidal product $\C\times \D\in\3Cat$
of $\C,\D\in\3Cat$ is the $\2Cat^{\mathrm{ps}}$-enriched category
whose objects are pairs $(c,d)\in\C\times \D$ of objects $c\in\C$ and $d\in\D$
and whose morphisms $(\C\times \D)((c,d),(c^\prime,d^\prime)):=\C(c,c^\prime)\times \D(d,d^\prime)
\in\2Cat^{\mathrm{ps}}$ are given by the Cartesian symmetric monoidal structure on $\2Cat^{\mathrm{ps}}$.
The monoidal unit $\mathcal{I}\in \3Cat$ is the $\2Cat^{\mathrm{ps}}$-enriched category
with a single object, say $\star\in \mathcal{I}$, and morphisms $\mathcal{I}(\star,\star):=
\mathbf{1}\in\2Cat^{\mathrm{ps}}$ the terminal object. The symmetric braiding
\begin{subequations}
\begin{flalign}
\mathrm{flip}_{\C,\D}^{}\,:\,\C\times \D ~\longrightarrow~\D\times\C
\end{flalign}
is the $\2Cat^{\mathrm{ps}}$-enriched functor which acts on objects by flipping 
$(c,d)\mapsto (d,c)$ and on morphisms through the symmetric braiding on $\2Cat^{\mathrm{ps}}$
\begin{flalign}
(\C\times \D)\big((c,d),(c^\prime,d^\prime)\big)\,=\, \C(c,c^\prime)\times \D(d,d^\prime)~
\stackrel{\cong}{\longrightarrow}~ \D(d,d^\prime)\times \C(c,c^\prime)\,=\, (\D\times\C)\big((d,c),(d^\prime,c^\prime)\big)\quad.
\end{flalign}
\end{subequations}
Making use of this symmetric monoidal structure on $\3Cat$,
one can define a concept of symmetric monoidal category objects in $\3Cat$.
\begin{defi}\label{def:SM3Cat}
A \textit{symmetric monoidal category object in $\3Cat$} is a tuple 
$(\C,\boxtimes,u,\alpha,\lambda,\rho,\tau)$
consisting of the following data:
\begin{itemize}
\item[(i)] an object $\C\in \3Cat$,
\item[(ii)] a $1$-morphism $\boxtimes : \C\times\C\to \C$ in $\3Cat$ (called monoidal product),
\item[(iii)] a $1$-morphism $u : \mathcal{I}\to \C$ in $\3Cat$ (called monoidal unit),
\item[(iv)] invertible $2$-morphisms
\begin{equation}
\begin{tikzcd}
	{\mathcal{C}\times\mathcal{C}\times \mathcal{C}} && {\mathcal{C}\times \mathcal{C}} \\
	\\
	{\mathcal{C}\times\mathcal{C}} && {\mathcal{C}}
	\arrow["{\boxtimes\times\mathrm{id}_\mathcal{C}}", from=1-1, to=1-3]
	\arrow["{\mathrm{id}_{\mathcal{C}}\times\boxtimes}"', from=1-1, to=3-1]
	\arrow["\alpha"', shorten <=12pt, shorten >=12pt, Rightarrow, from=1-3, to=3-1]
	\arrow["\boxtimes", from=1-3, to=3-3]
	\arrow["\boxtimes"', from=3-1, to=3-3]
\end{tikzcd}\qquad\begin{tikzcd}
	& {\mathcal{C}} \\
	{\mathcal{I}\times\mathcal{C}} && {\mathcal{C}\times\mathcal{I}} \\
	& {\mathcal{C}\times\mathcal{C}}
	\arrow["\cong"', from=1-2, to=2-1]
	\arrow["\cong", from=1-2, to=2-3]
	\arrow["{u\times \mathrm{id}_\mathcal{C}}"', from=2-1, to=3-2]
	\arrow["{\mathrm{id}_\mathcal{C}\times u}", from=2-3, to=3-2]
	\arrow[""{name=0, anchor=center, inner sep=0}, "\boxtimes"{description}, from=3-2, to=1-2]
	\arrow["\lambda", shorten <=5pt, shorten >=9pt, Rightarrow, from=2-1, to=0]
	\arrow["\rho"', shorten <=5pt, shorten >=9pt, Rightarrow, from=2-3, to=0]
\end{tikzcd}
\end{equation}
in $\3Cat$ (called associator, left unitor and right unitor),

\item[(v)] an invertible $2$-morphism 
\begin{equation}
\begin{tikzcd}
	{\mathcal{C}\times\mathcal{C}} && {\mathcal{C}} \\
	& {\mathcal{C}\times\mathcal{C}}
	\arrow[""{name=0, anchor=center, inner sep=0}, "\boxtimes", from=1-1, to=1-3]
	\arrow["{\mathrm{flip}_{\mathcal{C},\mathcal{C}}^{}}"', from=1-1, to=2-2]
	\arrow["\boxtimes"', from=2-2, to=1-3]
	\arrow["\tau", shorten <=6pt, shorten >=3pt, Rightarrow, from=0, to=2-2]
\end{tikzcd}
\end{equation}
in $\3Cat$ (called symmetric braiding).
\end{itemize}
These data have to satisfy the usual coherence conditions for symmetric monoidal categories
internal to the $2$-category $\3Cat$.
\end{defi}

\subsection{A symmetric monoidal \texorpdfstring{$3$}{3}-category of \texorpdfstring{$2$}{2}-term truncated dg-categories}
\label{app:ChCat}
Let $R$ be a commutative, associative and unital $\bbK$-algebra and consider
the symmetric monoidal category $\Ch_R$ of cochain complexes of $R$-modules
from  Subsection \ref{subsec:cochain}.
We denote by
\begin{flalign}
\Ch_R^{[-1,0]}\,\subseteq\, \Ch_R
\end{flalign}
the full subcategory whose objects are all cochain complexes $V\in \Ch_R$
concentrated in degrees $\{-1,0\}$, i.e.\ $V^i=0$ for all $i\in \bbZ\setminus\{-1,0\}$.
We shall often denote such objects by
\begin{flalign}
V \,=\, \Big(\xymatrix@C=2em{
V^{-1} \,\ar[r]^-{\dd_V} \,&\, V^0
}\Big)
\,\in\, \Ch_R^{[-1,0]} \quad.
\end{flalign}
Note that the tensor product $\otimes_R$ on $\Ch_R$ does not directly restrict
to the full subcategory $\Ch_R^{[-1,0]}\subseteq \Ch_R$ because
$V\otimes_R W\in \Ch_R$ is, in general, concentrated in degrees $\{-2,-1,0\}$,
for $V,W\in \Ch_R^{[-1,0]}$. This issue can be resolved by composing $\otimes_R$ with
the good truncation functor $\tau^{[-1,0]} : \Ch_R \to \Ch_{R}^{[-1,0]}$ to define
the truncated tensor product
\begin{subequations}\label{eqn:truncatedtensor}
\begin{flalign}
\widetilde{\otimes}_R\,:\,
\xymatrix@C=3em{
\Ch_R^{[-1,0]}\times \Ch_R^{[-1,0]}\ar[r]^-{\subseteq} ~&~ \Ch_R\times\Ch_R\ar[r]^-{\otimes_R} ~&~
\Ch_R \ar[r]^-{\tau^{[-1,0]}} ~&~ \Ch_R^{[-1,0]}
}\quad.
\end{flalign}
Explicitly, given two objects $V,W\in \Ch_R^{[-1,0]}$, the truncated tensor product reads as
\begin{flalign}
V\widetilde{\otimes}_R W\,=\, \bigg(
\xymatrix@C=4em{
\frac{\big(V^{-1}\otimes_R W^0\big) \oplus \big(V^0\otimes_R W^{-1}\big)}{\dd_{V\otimes_R W}\big(V^{-1}\otimes_R W^{-1}\big)} \ar[r]^-{\dd_{V\otimes_R W}} ~&~
V^0\otimes_R W^0
}
\bigg)\quad.
\end{flalign}
\end{subequations}
One directly checks that $\Ch_R^{[-1,0]}$ forms a symmetric monoidal category
with respect to the truncated tensor product $\widetilde{\otimes}_R$, the monoidal unit $R\in \Ch_R^{[-1,0]}$
and the symmetric braiding
\begin{flalign}
\widetilde{\gamma}_{V,W}^{}\,:\, V\widetilde{\otimes}_R W~\longrightarrow~W\widetilde{\otimes}_R V~,~~
v\otimes_R w~\longmapsto~ w\otimes_R v\quad,
\end{flalign}
where all Koszul signs are trivial
because there are no odd-odd degree combinations in the truncated tensor product \eqref{eqn:truncatedtensor}.
For later use, let us observe that the symmetric monoidal categories
$(\Ch_R,\otimes_R,R,\gamma)$ and $(\Ch_R^{[-1,0]},\widetilde{\otimes}_R,R,\widetilde{\gamma})$
are related by a lax symmetric monoidal functor.
\begin{lem}\label{lem:laxtruncation}
The truncation functor $\tau^{[-1,0]} : \Ch_R \to \Ch_{R}^{[-1,0]}$ is canonically lax symmetric monoidal.
\end{lem}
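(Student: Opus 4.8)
The plan is to produce the lax symmetric monoidal structure maps for the good truncation functor $\tau^{[-1,0]}$ and to check the coherence axioms, all of which reduce to elementary diagram chasing with $2$-term complexes. First I would recall that for any $V \in \Ch_R$ the good truncation is $\tau^{[-1,0]}(V) = \big(\,V^{-1}/\Imm(\dd_V^{-2}) \xrightarrow{\,\dd_V\,} \Ker(\dd_V^0)\,\big)$, concentrated in degrees $-1$ and $0$, and that $\tau^{[-1,0]}$ is a functor by construction. The unit constraint is trivial: since the monoidal unit $R \in \Ch_R$ is already concentrated in degree $0$ with zero differential, one has $\tau^{[-1,0]}(R) = R = R_{[-1,0]}$, so the unit structure map is the identity.

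Next I would construct the lax structure morphism. For $V, W \in \Ch_R$ there is a canonical comparison map
\begin{flalign*}
\phi_{V,W}\,:\,\tau^{[-1,0]}(V)\,\widetilde{\otimes}_R\,\tau^{[-1,0]}(W)~\longrightarrow~\tau^{[-1,0]}(V\otimes_R W)\quad,
\end{flalign*}
obtained as follows. The inclusions of the truncations $\tau^{[-1,0]}(V) \to V$ and $\tau^{[-1,0]}(W)\to W$ are not chain maps in the naive sense (the degree $-1$ part is a quotient, not a subobject), but in degrees $\{-1,0\}$ one does get well-defined $R$-linear maps $\tau^{[-1,0]}(V)\otimes_R \tau^{[-1,0]}(W) \to (V\otimes_R W)$ in degrees $\{-2,-1,0\}$; composing with $\tau^{[-1,0]}$ and using that $\widetilde{\otimes}_R = \tau^{[-1,0]}\circ \otimes_R$ on the truncated subcategory gives $\phi_{V,W}$. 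Concretely, in degree $0$ it sends $\Ker(\dd_V^0)\otimes_R\Ker(\dd_W^0) \to \Ker(\dd_{V\otimes_R W}^0)$ (a restriction of $\id$), and in degree $-1$ it is the evident map from $\big((V^{-1}/\Imm)\otimes W^0 \oplus V^0\otimes(W^{-1}/\Imm)\big)/(\dots)$ into $(V\otimes W)^{-1}/\Imm(\dd_{V\otimes W}^{-2})$; one checks this is well-defined because the relations being quotiented on the left map into the relation $\Imm(\dd_{V\otimes W}^{-2})$ on the right, using the Koszul-sign Leibniz rule for $\dd_{V\otimes_R W}$. Naturality of $\phi_{V,W}$ in $V$ and $W$ is immediate since all the maps are restrictions/quotients of the evident natural maps.

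Then I would verify the three lax monoidal coherence conditions: compatibility of $\phi$ with the associators, with the left unitor, and with the right unitor. Each of these is a commuting square (or triangle) of $R$-modules in degrees $\{-1,0\}$; since the underlying maps are all built from identities and canonical quotient maps, the squares commute already at the level of the underlying graded $R$-modules, before truncation, so applying $\tau^{[-1,0]}$ preserves commutativity. Finally, for the \emph{symmetric} lax structure I would check that $\phi$ intertwines the symmetric braiding $\gamma$ on $\Ch_R$ with $\widetilde{\gamma}$ on $\Ch_R^{[-1,0]}$; this again holds because $\widetilde{\gamma}$ is literally the restriction of $\gamma$ to the truncated subcategory (all Koszul signs in the relevant bidegrees $(0,0)$, $(-1,0)$, $(0,-1)$ being trivial), so the braiding square commutes on the nose.

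The main obstacle is the well-definedness of $\phi_{V,W}$ in degree $-1$: one must confirm that the denominator $\dd_{V\otimes_R W}(V^{-1}\otimes_R W^{-1})$ appearing in the definition of $\widetilde{\otimes}_R$ in \eqref{eqn:truncatedtensor}, together with the separate quotients $\Imm(\dd_V^{-2})$ and $\Imm(\dd_W^{-2})$ coming from the truncations of $V$ and $W$, all land inside $\Imm(\dd_{V\otimes_R W}^{-2})$ after the obvious map. This is a short computation using only the Leibniz rule for $\dd_{V\otimes_R W}$ and is entirely routine, but it is the one place where the argument is not merely ``restrict the identity map,'' so it deserves to be spelled out carefully rather than asserted.
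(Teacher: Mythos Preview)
Your proposal is correct and follows essentially the same approach as the paper: construct the unit constraint as the identity and give the lax structure map $\phi_{V,W}$ explicitly as the obvious $R$-linear map in each degree, then note well-definedness. The paper's proof is actually more terse than yours—it simply displays the cochain map and does not spell out the coherence or symmetry checks—so your added discussion of associativity, unit, and braiding compatibility is a harmless expansion. One small slip: in your degree $-1$ description you wrote $W^0$ and $V^0$ where you should have $\mathsf{Z}^0(W)$ and $\mathsf{Z}^0(V)$, since the degree-$0$ part of the truncation is the $0$-cocycles; the paper makes this explicit.
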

\begin{proof}
The functor $\tau^{[-1,0]}$ preserves the monoidal units strictly, i.e.\ $\tau^{[-1,0]}(R)=R$,
and the lax structure $\tau^{[-1,0]}(V)\,\widetilde{\otimes}_R\,\tau^{[-1,0]}(W)\to 
\tau^{[-1,0]}\big(V\otimes_R W\big)$ for the tensor product of $V,W\in\Ch_R$ 
is given by the obvious cochain map (drawn vertically)
\begin{flalign}
\begin{gathered}
\xymatrix@C=4em@R=2em{
\ar[d] \frac{\big(\tfrac{V^{-1}}{\dd_V V^{-2}}\otimes_R \mathsf{Z}^0(W)\big) \oplus \big(\mathsf{Z}^0(V)\otimes_R \tfrac{W^{-1}}{\dd_W W^{-2}}\big)}{\dd_{V\otimes_R W} \big(V^{-1}\otimes_R W^{-1}\big)} \ar[r]^-{\dd_{V\otimes_R W}} ~&~ \mathsf{Z}^0(V)\otimes_R \mathsf{Z}^0(W)
\ar[d]\\
\frac{(V\otimes_R W)^{-1}}{\dd_{V\otimes_R W}\big(V\otimes_R W\big)^{-2}}\ar[r]_-{\dd_{V\otimes_R W}}~&~\mathsf{Z}^0(V\otimes_R W)
}
\end{gathered}\quad,
\end{flalign}
where $\mathsf{Z}^0(U) := \ker(\dd_U : U^0 \to U^1)$ denotes the submodule of $0$-cocycles
associated with a cochain complex $U\in \Ch_R$.
\end{proof}

The goal of the remaining part of this subsection is to provide details
for the following result.
\begin{propo}\label{prop:Chtrunccat}
There exists a $3$-category
\begin{flalign}
\dgCat_R^{[-1,0],\mathrm{ps}} \,\in\,\3Cat
\end{flalign}
in the sense of Definition \ref{def:3Cat} whose
\begin{itemize}
\item objects are $\Ch_R^{[-1,0]}$-enriched categories,
\item $1$-morphisms are $\Ch_R^{[-1,0]}$-enriched functors,
\item $2$-morphisms are $\Ch_R^{[-1,0]}$-enriched pseudo-natural transformations, and
\item $3$-morphisms are $\Ch_R^{[-1,0]}$-enriched modifications.
\end{itemize}
This $3$-category admits a symmetric monoidal structure in the sense of
Definition \ref{def:SM3Cat} which agrees on objects, $1$-morphisms 
and strictly natural $2$-morphisms with the usual one from enriched
category theory, see e.g.\ \cite[Section 1.4]{Kelly}.
\end{propo}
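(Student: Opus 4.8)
The plan is to build $\dgCat_R^{[-1,0],\mathrm{ps}}$ one categorical layer at a time and then transport the symmetric monoidal structure of the enrichment base onto it. Write $\V:=\Ch_R^{[-1,0]}$ for brevity. First I would assemble, for every pair of $\V$-enriched categories $\CC,\DD$, a strict $2$-category $\dgCat_R^{[-1,0],\mathrm{ps}}(\CC,\DD)$ whose $0$-cells are $\V$-enriched functors, whose $1$-cells are $\V$-enriched pseudo-natural transformations as in Definition \ref{def:pseudonatural} (a single-indexed family of $0$-cocycles together with the double-indexed homotopy data subject to properties (1)--(3)), and whose $2$-cells are $\V$-enriched modifications as in Definition \ref{def:modi}. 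Vertical composition of $1$-cells is the $\circ$-composition of pseudo-natural transformations, and the two compositions of modifications are defined componentwise. Since modifications are, in each component, just families of $R$-linear maps of cochain complexes, their interchange law is inherited strictly from $\Ch_R$, so this datum is indeed a strict $2$-category; associativity and unitality of $\circ$ are verified component by component.

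Next I would define the horizontal composition pseudo-functors
\[
\ast\,:\,\dgCat_R^{[-1,0],\mathrm{ps}}(\DD,\EE)\times\dgCat_R^{[-1,0],\mathrm{ps}}(\CC,\DD)~\longrightarrow~\dgCat_R^{[-1,0],\mathrm{ps}}(\CC,\EE)\quad.
\]
On $0$-cells this is ordinary composition of $\V$-functors, on $1$-cells it is the Godement product of pseudo-natural transformations (single-indexed components obtained by whiskering, double-indexed components combining the homotopy data of the two transformations), and on $2$-cells it is the evident whiskered composite of modifications. The crucial point is that $\ast$ is only a \emph{pseudo}-functor: the interchange $2$-cells relating $(\beta'\ast\alpha')\circ(\beta\ast\alpha)$ and $(\beta'\circ\beta)\ast(\alpha'\circ\alpha)$ are built from the pseudo-naturality data and are \emph{not} identities in general. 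I would exhibit these coherence modifications as those appearing in \eqref{eqn:compositioncoherences}, check that they are invertible (as elements of $\V$ they become isomorphisms after passing to the good truncation, the would-be $2$-homotopies being identified with $0$), and verify the pentagon and unit axioms of a pseudo-functor. Along the way one records the observation already used in the proof of Proposition \ref{prop:hexagoninf}, namely that these coherences vanish whenever one of the two transformations is strictly natural. Finally, $\ast$ is strictly associative and strictly unital for the identity $\V$-functors, so the collection $\{\dgCat_R^{[-1,0],\mathrm{ps}}(\CC,\DD)\}$ together with $\ast$ defines a $\2Cat^{\mathrm{ps}}$-enriched category, i.e.\ an object of $\3Cat$ in the sense of Definition \ref{def:3Cat}.

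For the symmetric monoidal structure I would use that $(\V,\widetilde{\otimes}_R,R,\widetilde{\gamma})$ is symmetric monoidal, as established around \eqref{eqn:truncatedtensor}; in particular $\widetilde{\otimes}_R$ is associative up to coherent isomorphism, since $(V\widetilde{\otimes}_R W)\widetilde{\otimes}_R U\cong\tau^{[-1,0]}(V\otimes_R W\otimes_R U)\cong V\widetilde{\otimes}_R(W\widetilde{\otimes}_R U)$ by Lemma \ref{lem:laxtruncation}. The monoidal product $\CC\boxtimes\DD$ of $\V$-categories then has objects the pairs and hom-objects $\CC(c,c')\,\widetilde{\otimes}_R\,\DD(d,d')$, and this assignment is functorial in all three layers of $\dgCat_R^{[-1,0],\mathrm{ps}}$ by tensoring functors, pseudo-natural transformations and modifications componentwise. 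The associator, unitors and symmetric braiding are induced from those of $\V$; they are strictly natural $\V$-enriched transformations and hence in particular pseudo-natural, and the coherence diagrams required by Definition \ref{def:SM3Cat} reduce to the corresponding coherences in $\V$. The compatibility claim is then immediate: restricting attention to $0$-cells, $1$-cells and strictly natural $2$-cells, all of the above constructions coincide verbatim with the standard ones for $\V$-enriched category theory, see e.g.\ \cite[Section~1.4]{Kelly}.

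The hard part will be the second step: making the pseudo-functor coherences for $\ast$ completely precise, i.e.\ writing down the interchange modifications in $\Ch_R^{[-1,0]}$, checking that the naive homotopy formulas descend to the good truncation and are invertible there, and verifying the associativity and unit coherence axioms of a pseudo-functor, all while bookkeeping the Koszul signs and the several indices of Definitions \ref{def:pseudonatural} and \ref{def:modi}. A subsidiary point that also requires care is that, because $\widetilde{\otimes}_R$ is defined through the only lax (not strong) symmetric monoidal truncation functor of Lemma \ref{lem:laxtruncation}, one must confirm that the induced associativity and unit constraints on $\boxtimes$ are genuinely coherent before feeding them into Definition \ref{def:SM3Cat}.
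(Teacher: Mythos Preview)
Your proposal follows essentially the same approach as the paper, which likewise builds the hom $2$-categories, defines $\ast$ as a pseudo-functor with the explicit coherence modifications \eqref{eqn:compositioncoherences}, verifies strict associativity and unitality of $\ast$, and then transports the symmetric monoidal structure from $\Ch_R^{[-1,0]}$; the paper simply writes out all component formulas explicitly rather than describing them abstractly. One small correction: invertibility of the coherence modifications is automatic because vertical composition of modifications is componentwise addition (so $-\Gamma$ inverts $\Gamma$), not because of any truncation argument.
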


The objects and $1$-morphisms in $\dgCat_R^{[-1,0],\mathrm{ps}}$ are defined 
according to the standard definitions of enriched category theory.
The precise definitions of the $2$-morphisms and $3$-morphisms are as follows.
\begin{defi}\label{def:pseudonatural}
Let $F,G : \CC\to\DD$ be two $\Ch_R^{[-1,0]}$-enriched functors between two
$\Ch_R^{[-1,0]}$-enriched categories $\CC$ and $\DD$. A \textit{$\Ch_R^{[-1,0]}$-enriched pseudo-natural
transformation} $\zeta : F\Rightarrow G$ consists of the following data:
\begin{itemize}
\item[(i)] For each object $c\in \CC$, an element $\zeta_c \in \DD(F(c),G(c))^0$ 
of degree $0$ in the $\Hom$-object $\DD(F(c),G(c))\in \Ch_R^{[-1,0]}$.

\item[(ii)] For each pair of objects $c,c^\prime\in \CC$, an $R$-linear map
\begin{flalign}\label{eqn:doubleindexedcomponents}
\zeta_{c,c^\prime}\,:\, \CC(c,c^\prime)^0~\longrightarrow~\DD(F(c),G(c^\prime))^{-1}\quad.
\end{flalign}
\end{itemize}
These data have to satisfy the following properties:
\begin{itemize}
\item[(1)] For all $h\in\CC(c,c^\prime)^0$,
\begin{subequations}
\begin{flalign}
G(h)\, \zeta_c - \zeta_{c^\prime}\,F(h)\,=\,\dd_{\DD}\big( \zeta_{c,c^\prime}(h)\big)\quad,
\end{flalign}
and for all $k\in \CC(c,c^\prime)^{-1}$,
\begin{flalign}
G(k)\, \zeta_c - \zeta_{c^\prime}\,F(k)\,=\, \zeta_{c,c^\prime}\big(\dd_\CC(k)\big)\quad.
\end{flalign}
\end{subequations}

\item[(2)] For all $c\in \CC$,
\begin{flalign}
\zeta_{c,c}(\id_c) \,=\,0\quad.
\end{flalign}

\item[(3)] For all $h\in \CC(c,c^\prime)^0$ and $h^\prime\in \CC(c^\prime,c^{\prime\prime})^0$,
\begin{flalign}
\zeta_{c,c^{\prime\prime}}\big(h^\prime\,h\big)\,=\, \zeta_{c^\prime,c^{\prime\prime}}(h^\prime)\, F(h) + G(h^\prime)\,\zeta_{c,c^\prime}(h)\quad.
\end{flalign}
\end{itemize}
\end{defi}

\begin{rem}
Note that in the special case where $\zeta_{c,c^\prime}=0$, for all $c,c^\prime\in \CC$, 
Definition \ref{def:pseudonatural} reduces to the usual definition of a $\Ch_R^{[-1,0]}$-enriched (strict) 
natural transformation.
\end{rem}

\begin{defi}\label{def:modi}
Let $\zeta,\eta : F\Rightarrow G: \CC\to \DD$ be two 
$\Ch_R^{[-1,0]}$-enriched pseudo-natural transformations.
A \textit{$\Ch_R^{[-1,0]}$-enriched modification} $\Gamma : \zeta \Rrightarrow \eta$
consists of the following data:
\begin{itemize}
\item[(i)] For each object $c\in \CC$, an element $\Gamma_c \in \DD(F(c),G(c))^{-1}$ 
of degree $-1$ in the $\Hom$-object $\DD(F(c),G(c))\in \Ch_R^{[-1,0]}$.
\end{itemize}
These data have to satisfy the following properties:
\begin{itemize}
\item[(1)] For all $c\in \CC$, 
\begin{flalign}
\zeta_c-\eta_c\,=\,\dd_{\DD}(\Gamma_c)\quad.
\end{flalign}

\item[(2)] For all $h\in \CC(c,c^\prime)^0$,
\begin{flalign}
\Gamma_{c^\prime} \,F(h) + \zeta_{c,c^\prime}(h)\,=\,\eta_{c,c^\prime}(h)+ G(h)\,\Gamma_c\quad.
\end{flalign}
\end{itemize}
\end{defi}

In the following paragraphs we describe explicitly
the symmetric monoidal $3$-category structure of $\dgCat_R^{[-1,0],\mathrm{ps}}$ 
in the sense of Definitions \ref{def:3Cat} and \ref{def:SM3Cat}.
To avoid overloading this appendix, we shall only spell out the relevant data 
but not the verification of their properties,  which are mostly straightforward calculations.

\paragraph{The homomorphism $2$-categories:} Given two $\Ch_R^{[-1,0]}$-enriched
categories $\CC$ and $\DD$, we denote by $\dgCat_R^{[-1,0],\mathrm{ps}}\big(\CC,\DD\big)\in \2Cat^{\mathrm{ps}}$
the strict $2$-category whose objects are $\Ch_R^{[-1,0]}$-enriched functors,
$1$-morphisms are $\Ch_R^{[-1,0]}$-enriched pseudo-natural transformations and
$2$-morphisms are $\Ch_R^{[-1,0]}$-enriched modifications. The composition
of two $1$-morphisms
\begin{subequations}
\begin{equation}
\begin{tikzcd}
	{\mathbf{C}} && {\mathbf{D}}
	\arrow[""{name=0, anchor=center, inner sep=0}, "G"{description}, from=1-1, to=1-3]
	\arrow[""{name=1, anchor=center, inner sep=0}, "F", curve={height=-32pt}, from=1-1, to=1-3]
	\arrow[""{name=2, anchor=center, inner sep=0}, "H"',curve={height=32pt}, from=1-1, to=1-3]
	\arrow["\zeta"', shorten <=5pt, shorten >=5pt, Rightarrow, from=1, to=0]
	\arrow["\eta"', shorten <=5pt, shorten >=5pt, Rightarrow, from=0, to=2]
\end{tikzcd}~~\stackrel{\circ}{\longmapsto}~~
\begin{tikzcd}
	{\mathbf{C}} && {\mathbf{D}}
	\arrow[""{name=1, anchor=center, inner sep=0}, "F", curve={height=-32pt}, from=1-1, to=1-3]
	\arrow[""{name=2, anchor=center, inner sep=0}, "H"', curve={height=32pt}, from=1-1, to=1-3]
	\arrow["\eta\circ \zeta"', shorten <=5pt, shorten >=5pt, Rightarrow, from=1, to=2]
\end{tikzcd}
\end{equation}
in the $2$-category $\dgCat_R^{[-1,0],\mathrm{ps}}\big(\CC,\DD\big)$ is defined by
\begin{flalign}
(\eta\circ\zeta)_c\,&:=\, \eta_c\,\zeta_c\quad,\\
(\eta\circ\zeta)_{c,c^\prime}(h)\,&:=\, \eta_{c,c^\prime}(h)\,\zeta_c + \eta_{c^\prime}\,\zeta_{c,c^\prime}(h)\quad,
\end{flalign}
\end{subequations}
for all $c,c^\prime\in\CC$ and $h\in \CC(c,c^\prime)^0$. This composition
is strictly associative and strictly unital with respect to the identity $1$-morphisms
$\Id_F : F\Rightarrow F : \CC\to \DD$ defined by $(\Id_F)_c = \id_{F(c)}$ and
$(\Id_{F})_{c,c^\prime}=0$.
\sk

The horizontal composition of two $2$-morphisms
\begin{subequations}
\begin{equation}
\begin{tikzcd}
	{\mathbf{C}} && {\mathbf{D}}
	\arrow[""{name=0, anchor=center, inner sep=0}, "G"{description}, from=1-1, to=1-3]
	\arrow[""{name=1, anchor=center, inner sep=0}, "H"', curve={height=36pt}, from=1-1, to=1-3]
	\arrow[""{name=2, anchor=center, inner sep=0}, "F", curve={height=-36pt}, from=1-1, to=1-3]
	\arrow[""{name=3, anchor=center, inner sep=0}, "{\,\zeta^\prime}", curve={height=-16pt}, shorten <=5pt, shorten >=5pt, Rightarrow, from=2, to=0]
	\arrow[""{name=4, anchor=center, inner sep=0}, "\zeta\,"', curve={height=16pt}, shorten <=5pt, shorten >=5pt, Rightarrow, from=2, to=0]
	\arrow[""{name=5, anchor=center, inner sep=0}, "\eta\,"', curve={height=16pt}, shorten <=5pt, shorten >=5pt, Rightarrow, from=0, to=1]
	\arrow[""{name=6, anchor=center, inner sep=0}, "{\,\eta^\prime}", curve={height=-16pt}, shorten <=5pt, shorten >=5pt, Rightarrow, from=0, to=1]
	\arrow["\Gamma", shorten <=5pt, shorten >=5pt, triple, from=4, to=3]
	\arrow["\Theta", shorten <=5pt, shorten >=5pt, triple, from=5, to=6]
\end{tikzcd}
~~\stackrel{\circ}{\longmapsto}~~
\begin{tikzcd}
	{\mathbf{C}} &&& {\mathbf{D}}
	\arrow[""{name=1, anchor=center, inner sep=0}, "H"', curve={height=36pt}, from=1-1, to=1-4]
	\arrow[""{name=2, anchor=center, inner sep=0}, "F", curve={height=-36pt}, from=1-1, to=1-4]
	\arrow[""{name=3, anchor=center, inner sep=0}, "{\,\eta^\prime \circ \zeta^\prime}", curve={height=-20pt}, shorten <=5pt, shorten >=5pt, Rightarrow, from=2, to=1]
	\arrow[""{name=4, anchor=center, inner sep=0}, "\eta\circ \zeta\,"', curve={height=20pt}, shorten <=5pt, shorten >=5pt, Rightarrow, from=2, to=1]
	\arrow["\Theta\circ \Gamma", shorten <=5pt, shorten >=5pt, triple, from=4, to=3]
\end{tikzcd}
\end{equation}
in the $2$-category $\dgCat_R^{[-1,0],\mathrm{ps}}\big(\CC,\DD\big)$ is defined by
\begin{flalign}
(\Theta\circ\Gamma)_c\,:=\, \Theta_c\,\zeta_c + \eta^\prime_c\,\Gamma_c\quad,
\end{flalign}
\end{subequations}
for all $c\in \CC$. This composition is strictly associative and strictly unital
with respect to the identity $2$-morphisms
$\ID_{\Id_F} : \Id_F\Rrightarrow \Id_F : F\Rightarrow F : \CC\to \DD$ defined by $(\ID_{\Id_F})_c = 0$.
\sk

The vertical composition of two $2$-morphisms
\begin{subequations}
\begin{equation}
\begin{tikzcd}
	{\mathbf{C}} &&& {\mathbf{D}}
	\arrow[""{name=0, anchor=center, inner sep=0}, "G"', curve={height=36pt}, from=1-1, to=1-4]
	\arrow[""{name=1, anchor=center, inner sep=0}, "F", curve={height=-36pt}, from=1-1, to=1-4]
	\arrow[""{name=2, anchor=center, inner sep=0}, "{\,\zeta^{\prime\prime}}", curve={height=-28pt}, shorten <=8pt, shorten >=8pt, Rightarrow, from=1, to=0]
	\arrow[""{name=3, anchor=center, inner sep=0}, "\zeta\,"', curve={height=28pt}, shorten <=8pt, shorten >=8pt, Rightarrow, from=1, to=0]
	\arrow[""{name=4, anchor=center, inner sep=0}, "{\zeta^\prime}"{description}, shorten <=6pt, shorten >=6pt, Rightarrow, from=1, to=0]
	\arrow["\Gamma", shorten <=4pt, shorten >=4pt, triple, from=3, to=4]
	\arrow["{\Gamma^\prime}", shorten <=4pt, shorten >=4pt, triple, from=4, to=2]
\end{tikzcd}
~~\longmapsto~~
\begin{tikzcd}
	{\mathbf{C}} &&& {\mathbf{D}}
	\arrow[""{name=0, anchor=center, inner sep=0}, "G"', curve={height=36pt}, from=1-1, to=1-4]
	\arrow[""{name=1, anchor=center, inner sep=0}, "F", curve={height=-36pt}, from=1-1, to=1-4]
	\arrow[""{name=2, anchor=center, inner sep=0}, "{\,\zeta^{\prime\prime}}", curve={height=-28pt}, shorten <=8pt, shorten >=8pt, Rightarrow, from=1, to=0]
	\arrow[""{name=3, anchor=center, inner sep=0}, "\zeta\,"', curve={height=28pt}, shorten <=8pt, shorten >=8pt, Rightarrow, from=1, to=0]
	\arrow["\Gamma^\prime\,\Gamma", shorten <=4pt, shorten >=4pt, triple, from=3, to=2]
\end{tikzcd}
\end{equation}
in the $2$-category $\dgCat_R^{[-1,0],\mathrm{ps}}\big(\CC,\DD\big)$ is defined by
\begin{flalign}
(\Gamma^\prime\,\Gamma)_c \,:=\, \Gamma^\prime_c + \Gamma_c\quad,
\end{flalign}
\end{subequations}
for all $c\in\CC$. This composition is strictly associative and strictly unital
with respect to the identity $2$-morphisms
$\ID_{\zeta} : \zeta\Rrightarrow \zeta : F\Rightarrow F : \CC\to \DD$ defined by $(\ID_{\zeta})_c = 0$.

\paragraph{The composition pseudo-functors and identities:} Given three $\Ch_R^{[-1,0]}$-enriched
categories $\CC$, $\DD$ and $\EE$, we denote by 
\begin{flalign}
\ast\,:\,\dgCat_R^{[-1,0],\mathrm{ps}}\big(\DD,\EE\big) \times 
\dgCat_R^{[-1,0],\mathrm{ps}}\big(\CC,\DD\big) ~\longrightarrow~
\dgCat_R^{[-1,0],\mathrm{ps}}\big(\CC,\EE\big)
\end{flalign}
the composition pseudo-functor between the homomorphism $2$-categories
from the previous paragraph.
On objects in the homomorphism $2$-categories,
this pseudo-functor acts in terms of the usual composition of $\Ch_R^{[-1,0]}$-enriched functors,
which we will denote by
\begin{equation}
\begin{tikzcd}
	{\mathbf{C}} & {\mathbf{D}} & {\mathbf{E}}
	\arrow["F", from=1-1, to=1-2]
	\arrow["F^\prime", from=1-2, to=1-3]
\end{tikzcd}
~~\stackrel{\ast}{\longmapsto}~~
\begin{tikzcd}
	{\mathbf{C}} & {\mathbf{E}}
	\arrow["F^\prime\ast F", from=1-1, to=1-2]
\end{tikzcd}\quad.
\end{equation}
On $1$-morphisms in the homomorphism $2$-categories, the pseudo-functor $\ast$ acts as
\begin{subequations}
\begin{equation}
\begin{tikzcd}
	{\mathbf{C}} && {\mathbf{D}} && {\mathbf{E}}
	\arrow[""{name=0, anchor=center, inner sep=0}, "F", curve={height=-32pt}, from=1-1, to=1-3]
	\arrow[""{name=1, anchor=center, inner sep=0}, "G"', curve={height=32pt}, from=1-1, to=1-3]
	\arrow[""{name=2, anchor=center, inner sep=0}, "{F^\prime}", curve={height=-32pt}, from=1-3, to=1-5]
	\arrow[""{name=3, anchor=center, inner sep=0}, "{G^\prime}"', curve={height=32pt}, from=1-3, to=1-5]
	\arrow["\zeta\,"', shorten <=6pt, shorten >=6pt, Rightarrow, from=0, to=1]
	\arrow["{\,\zeta^\prime}"', shorten <=6pt, shorten >=6pt, Rightarrow, from=2, to=3]
\end{tikzcd}
~~\stackrel{\ast}{\longmapsto}~~
\begin{tikzcd}
	{\mathbf{C}} && {\mathbf{E}}
	\arrow[""{name=0, anchor=center, inner sep=0}, "F^\prime \ast F", curve={height=-32pt}, from=1-1, to=1-3]
	\arrow[""{name=1, anchor=center, inner sep=0}, "G^\prime\ast G"', curve={height=32pt}, from=1-1, to=1-3]
	\arrow["\zeta^\prime\ast \zeta\,"', shorten <=6pt, shorten >=6pt, Rightarrow, from=0, to=1]
\end{tikzcd}
\end{equation}
with
\begin{flalign}
(\zeta^\prime\ast\zeta)_c\,&:=\,\zeta^\prime_{G(c)}\,F^\prime(\zeta_c)\quad,\\
(\zeta^\prime\ast\zeta)_{c,c^\prime}(h)\,&:=\,\zeta^\prime_{G(c),G(c^\prime)}\big(G(h)\big)\,F^\prime(\zeta_c)
+\zeta^\prime_{G(c^\prime)}\,F^\prime\big(\zeta_{c,c^\prime}(h)\big)\quad,
\end{flalign}
\end{subequations}
for all $c,c^\prime\in\CC$ and $h\in \CC(c,c^\prime)^0$. 
On $2$-morphisms in the homomorphism $2$-categories, the pseudo-functor $\ast$ acts as
\begin{subequations}
\begin{equation}
\begin{tikzcd}
	{\mathbf{C}} && {\mathbf{D}} && {\mathbf{E}}
	\arrow[""{name=0, anchor=center, inner sep=0}, "F", curve={height=-32pt}, from=1-1, to=1-3]
	\arrow[""{name=1, anchor=center, inner sep=0}, "G"', curve={height=32pt}, from=1-1, to=1-3]
	\arrow[""{name=2, anchor=center, inner sep=0}, "{F^\prime}", curve={height=-32pt}, from=1-3, to=1-5]
	\arrow[""{name=3, anchor=center, inner sep=0}, "{G^\prime}"', curve={height=32pt}, from=1-3, to=1-5]
	\arrow[""{name=4, anchor=center, inner sep=0}, "\zeta\,"', curve={height=18pt}, shorten <=7pt, shorten >=7pt, Rightarrow, from=0, to=1]
	\arrow[""{name=5, anchor=center, inner sep=0}, "\,\eta", curve={height=-18pt}, shorten <=7pt, shorten >=7pt, Rightarrow, from=0, to=1]
	\arrow[""{name=6, anchor=center, inner sep=0}, "{\,\zeta^\prime}"', curve={height=18pt}, shorten <=7pt, shorten >=7pt, Rightarrow, from=2, to=3]
	\arrow[""{name=7, anchor=center, inner sep=0}, "{\, \eta^\prime}", curve={height=-18pt}, shorten <=7pt, shorten >=7pt, Rightarrow, from=2, to=3]
	\arrow["\Gamma", shorten <=5pt, shorten >=5pt, triple, from=4, to=5]
	\arrow["{\Gamma^\prime}", shorten <=5pt, shorten >=5pt, triple, from=6, to=7]
\end{tikzcd}
~~\stackrel{\ast}{\longmapsto}~~
\begin{tikzcd}
	{\mathbf{C}} &&& {\mathbf{E}}
	\arrow[""{name=0, anchor=center, inner sep=0}, "F^\prime \ast F", curve={height=-32pt}, from=1-1, to=1-4]
	\arrow[""{name=1, anchor=center, inner sep=0}, "G^\prime\ast G"', curve={height=32pt}, from=1-1, to=1-4]
	\arrow[""{name=4, anchor=center, inner sep=0}, "\zeta^\prime \ast \zeta\,"', curve={height=18pt}, shorten <=7pt, shorten >=7pt, Rightarrow, from=0, to=1]
	\arrow[""{name=5, anchor=center, inner sep=0}, "\,\eta^\prime\ast \eta", curve={height=-18pt}, shorten <=7pt, shorten >=7pt, Rightarrow, from=0, to=1]
	\arrow["\Gamma^\prime\ast \Gamma", shorten <=5pt, shorten >=5pt, triple, from=4, to=5]
\end{tikzcd}
\end{equation}
with
\begin{flalign}
(\Gamma^\prime\ast\Gamma)_c\,:=\,\Gamma^\prime_{G(c)}\,F^\prime(\zeta_c) + \eta^\prime_{G(c)}\, F^\prime(\Gamma_c)\quad,
\end{flalign}
\end{subequations}
for all $c\in\CC$.
\sk

The pseudo-functor coherences for the identities
are trivial, i.e.\ $\Id_{F^\prime\ast F} = \Id_{F^\prime}\ast \Id_F$,
but those for the compositions are not. Given any composable diagram of the form
\begin{subequations}\label{eqn:compositioncoherences}
\begin{equation}
\begin{tikzcd}
	{\mathbf{C}} && {\mathbf{D}} && {\mathbf{E}}
	\arrow[""{name=0, anchor=center, inner sep=0}, "F", curve={height=-32pt}, from=1-1, to=1-3]
	\arrow[""{name=1, anchor=center, inner sep=0}, "G"{description}, from=1-1, to=1-3]
	\arrow[""{name=2, anchor=center, inner sep=0}, "H"', curve={height=32pt}, from=1-1, to=1-3]
	\arrow[""{name=3, anchor=center, inner sep=0}, "{F^\prime}", curve={height=-32pt}, from=1-3, to=1-5]
	\arrow[""{name=4, anchor=center, inner sep=0}, "{H^\prime}"', curve={height=32pt}, from=1-3, to=1-5]
	\arrow[""{name=5, anchor=center, inner sep=0}, "{G^\prime}"{description}, from=1-3, to=1-5]
	\arrow["\zeta"', shorten <=5pt, shorten >=5pt, Rightarrow, from=0, to=1]
	\arrow["\eta"', shorten <=5pt, shorten >=5pt, Rightarrow, from=1, to=2]
	\arrow["{\zeta^\prime}"', shorten <=5pt, shorten >=5pt, Rightarrow, from=3, to=5]
	\arrow["{\eta^\prime}"', shorten <=5pt, shorten >=5pt, Rightarrow, from=5, to=4]
\end{tikzcd}\quad,
\end{equation}
the corresponding pseudo-functor coherence
\begin{flalign}
\ast_{(\eta^\prime,\eta),(\zeta^\prime,\zeta)}^{}\,:\,(\eta^\prime\ast \eta)\circ (\zeta^\prime\ast \zeta)~\xRrightarrow{\,~~\,}~(\eta^\prime\circ\zeta^\prime)\ast(\eta\circ\zeta)
\end{flalign}
is given by the $\Ch_R^{[-1,0]}$-enriched modification
whose components are defined by
\begin{flalign}
\big(\ast_{(\eta^\prime,\eta),(\zeta^\prime,\zeta)}^{}\big)_c\,:=\,\eta^\prime_{H(c)}\,\zeta^\prime_{G(c),H(c)}(\eta_c)\,F^\prime(\zeta_c)\quad,
\end{flalign}
\end{subequations}
for all $c\in \CC$.
\sk

The composition pseudo-functors $\ast$ are strictly associative and strictly unital
with respect to the identities $\id_\CC \in \dgCat_R^{[-1,0],\mathrm{ps}}\big(\CC,\CC\big)$ 
given by the identity $\Ch_R^{[-1,0]}$-enriched functors $\id_\CC:\CC\to \CC$.

\paragraph{The symmetric monoidal structure:} The monoidal product
\begin{flalign}
\boxtimes\,:\, 
\dgCat_R^{[-1,0],\mathrm{ps}} \times 
\dgCat_R^{[-1,0],\mathrm{ps}}
~\longrightarrow~
\dgCat_R^{[-1,0],\mathrm{ps}}
\end{flalign}
is given by the $\2Cat^{\mathrm{ps}}$-enriched functor which assigns to a pair
of objects $\CC,\DD\in \dgCat_R^{[-1,0],\mathrm{ps}}$ the 
usual $\Ch_R^{[-1,0]}$-enriched tensor product category $\CC\boxtimes \DD\in  \dgCat_R^{[-1,0],\mathrm{ps}}$
and to a pair of $1$-morphisms $F : \CC\to \CC^\prime$ and $G : \DD\to\DD^\prime $
in $\dgCat_R^{[-1,0],\mathrm{ps}}$ the 
usual $\Ch_R^{[-1,0]}$-enriched tensor product functor 
$F\boxtimes G : \CC\boxtimes \DD\to \CC^\prime\boxtimes \DD^\prime$
from \cite[Section 1.4]{Kelly}.  
\sk

The monoidal product 
of a pair of $2$-morphisms
\begin{subequations}
\begin{equation}
\begin{tikzcd}
	{\mathbf{C}} && {\mathbf{C}^\prime}
	\arrow[""{name=0, anchor=center, inner sep=0}, "F", curve={height=-32pt}, from=1-1, to=1-3]
	\arrow[""{name=1, anchor=center, inner sep=0}, "{F^\prime}"', curve={height=32pt}, from=1-1, to=1-3]
	\arrow["\zeta\,"', shorten <=6pt, shorten >=6pt, Rightarrow, from=0, to=1]
\end{tikzcd}
~~
\begin{tikzcd}
	{\mathbf{D}} && {\mathbf{D}^\prime}
	\arrow[""{name=0, anchor=center, inner sep=0}, "G", curve={height=-32pt}, from=1-1, to=1-3]
	\arrow[""{name=1, anchor=center, inner sep=0}, "{G^\prime}"', curve={height=32pt}, from=1-1, to=1-3]
	\arrow["\eta\,"', shorten <=6pt, shorten >=6pt, Rightarrow, from=0, to=1]
\end{tikzcd}
~~\stackrel{\boxtimes}{\longmapsto}~~
\begin{tikzcd}
	{\mathbf{C}\boxtimes \mathbf{D}} && {\mathbf{C}^\prime\boxtimes \mathbf{D}^\prime}
	\arrow[""{name=0, anchor=center, inner sep=0}, "F\boxtimes G", curve={height=-32pt}, from=1-1, to=1-3]
	\arrow[""{name=1, anchor=center, inner sep=0}, "{F^\prime\boxtimes G^\prime}"', curve={height=32pt}, from=1-1, to=1-3]
	\arrow["\zeta\boxtimes \eta\,"', shorten <=6pt, shorten >=6pt, Rightarrow, from=0, to=1]
\end{tikzcd}
\end{equation}
in  $\dgCat_R^{[-1,0],\mathrm{ps}}$ is defined by
\begin{flalign}
(\zeta\boxtimes \eta)_{(c,d)}\,&:=\,\zeta_c\widetilde{\otimes}_R\eta_d\quad,\\
(\zeta\boxtimes \eta)_{(c,d),(c^\prime,d^\prime)}\big(h\widetilde{\otimes}_R k\big)\,&:=\,
\zeta_{c,c^\prime}(h)\widetilde{\otimes}_R G^\prime(k)\,\eta_d + \zeta_{c^\prime}\,F(h)\widetilde{\otimes}_R\eta_{d,d^\prime}(k)\quad,
\end{flalign}
\end{subequations}
for all $c,c^\prime\in\CC$, $d,d^\prime\in \DD$, $h\in\CC(c,c^\prime)^0$
and $k\in\DD(d,d^\prime)^0$. Note that for two $\Ch_R^{[-1,0]}$-enriched 
strictly natural transformations, i.e.\ $\zeta_{c,c^\prime}=0$ and $\eta_{d,d^\prime}=0$
for all objects $c,c^\prime\in\CC$ and $d,d^\prime\in\DD$, the tensor
product $\zeta\boxtimes \eta$ agrees with the usual one from enriched category theory.
\sk

The monoidal product of a pair of $3$-morphisms
\begin{subequations}
\begin{equation}
\begin{tikzcd}
	{\mathbf{C}} && {\mathbf{C}^\prime}
	\arrow[""{name=0, anchor=center, inner sep=0}, "F", curve={height=-32pt}, from=1-1, to=1-3]
	\arrow[""{name=1, anchor=center, inner sep=0}, "{F^\prime}"', curve={height=32pt}, from=1-1, to=1-3]
	\arrow[""{name=2, anchor=center, inner sep=0}, "\zeta\,"', curve={height=18pt}, shorten <=7pt, shorten >=7pt, Rightarrow, from=0, to=1]
	\arrow[""{name=3, anchor=center, inner sep=0}, "{\,\zeta^\prime}", curve={height=-18pt}, shorten <=7pt, shorten >=7pt, Rightarrow, from=0, to=1]
	\arrow["\Gamma", shorten <=5pt, shorten >=5pt, triple, from=2, to=3]
\end{tikzcd}
~~
\begin{tikzcd}
	{\mathbf{D}} && {\mathbf{D}^\prime}
	\arrow[""{name=0, anchor=center, inner sep=0}, "G", curve={height=-32pt}, from=1-1, to=1-3]
	\arrow[""{name=1, anchor=center, inner sep=0}, "{G^\prime}"', curve={height=32pt}, from=1-1, to=1-3]
	\arrow[""{name=2, anchor=center, inner sep=0}, "\eta\,"', curve={height=18pt}, shorten <=7pt, shorten >=7pt, Rightarrow, from=0, to=1]
	\arrow[""{name=3, anchor=center, inner sep=0}, "{\,\eta^\prime}", curve={height=-18pt}, shorten <=7pt, shorten >=7pt, Rightarrow, from=0, to=1]
	\arrow["\Theta", shorten <=5pt, shorten >=5pt, triple, from=2, to=3]
\end{tikzcd}
~~\stackrel{\boxtimes}{\longmapsto}~~
\begin{tikzcd}
	{\mathbf{C}\boxtimes \mathbf{D}} &&& {\mathbf{C}^\prime \boxtimes \mathbf{D}^\prime}
	\arrow[""{name=0, anchor=center, inner sep=0}, "F\boxtimes G", curve={height=-32pt}, from=1-1, to=1-4]
	\arrow[""{name=1, anchor=center, inner sep=0}, "{F^\prime}\boxtimes G^\prime"', curve={height=32pt}, from=1-1, to=1-4]
	\arrow[""{name=2, anchor=center, inner sep=0}, "\zeta\boxtimes \eta\,"', curve={height=22pt}, shorten <=7pt, shorten >=7pt, Rightarrow, from=0, to=1]
	\arrow[""{name=3, anchor=center, inner sep=0}, "{\,\zeta^\prime\boxtimes \eta^\prime}", curve={height=-22pt}, shorten <=7pt, shorten >=7pt, Rightarrow, from=0, to=1]
	\arrow["\Gamma\boxtimes \Theta", shorten <=5pt, shorten >=5pt, triple, from=2, to=3]
\end{tikzcd}
\end{equation}
in  $\dgCat_R^{[-1,0],\mathrm{ps}}$ is defined by
\begin{flalign}
(\Gamma\boxtimes \Theta)_{(c,d)}\,:=\,\Gamma_c\widetilde{\otimes}_R \eta_d + \zeta^\prime_c \widetilde{\otimes}_R\Theta_d\quad,
\end{flalign}
\end{subequations}
for all $c\in\CC$ and $d\in\DD$.
\sk

The monoidal unit $u :\mathcal{I}\to\dgCat_R^{[-1,0],\mathrm{ps}}$
is the $\Ch_R^{[-1,0]}$-enriched category
\begin{flalign}
\mathbf{R}\,\in\,  \dgCat_R^{[-1,0],\mathrm{ps}}
\end{flalign}
consisting of a single object, say $\star$, with morphisms
$\mathbf{R}(\star,\star):= R\in \Ch_R^{[-1,0]}$ given by
the monoidal unit of the enrichment category.
\sk

The associator and unitors for this monoidal structure 
are canonically given by the ones of the enrichment 
symmetric monoidal category $\Ch_R^{[-1,0]}$.
The component at $\CC,\DD\in \dgCat_R^{[-1,0],\mathrm{ps}}$ of the symmetric braiding 
\begin{subequations}\label{eqn:tauChCat}
\begin{flalign}
\tau_{\CC,\DD}\,:\,\CC\boxtimes \DD ~\longrightarrow~\DD \boxtimes \CC
\end{flalign}
is the $\Ch_R^{[-1,0]}$-enriched functor which sends objects
$(c,d)\in \CC\boxtimes \DD $ to $(d,c)\in \DD\boxtimes \CC$
and acts on morphisms via the symmetric braiding of the enrichment 
symmetric monoidal category $\Ch_R^{[-1,0]}$
\begin{flalign}
\begin{gathered}
\xymatrix@C=5em@R=1.5em{
(\CC\boxtimes \DD)\big((c,d),(c^\prime,d^\prime)\big) \ar@{=}[d]\ar@{-->}[r]~&~\ar@{=}[d]
(\DD\boxtimes \CC)\big((d,c),(d^\prime,c^\prime)\big)\\
\CC(c,c^\prime)\widetilde{\otimes}_{R}\DD(d,d^\prime) \ar[r]_-{\widetilde{\gamma}_{\CC(c,c^\prime),\DD(d,d^\prime)}}
~&~\DD(d,d^\prime)\widetilde{\otimes}_{R}\CC(c,c^\prime)
}
\end{gathered}\quad.
\end{flalign}
\end{subequations}
Note that the symmetric braiding satisfies the strict hexagon identities, i.e.\
\begin{subequations}\label{eqn:tauChCat-Hexs}
\begin{flalign}
\tau_{\CC\boxtimes\DD,\EE}\,&=\,(\tau_{\CC,\EE}\boxtimes\id_{\DD})\ast(\id_{\CC}\boxtimes\tau_{\DD,\EE})\quad,\\
\tau_{\CC,\DD\boxtimes\EE}\,&=\,(\id_{\DD}\boxtimes\tau_{\CC,\EE})\ast(\tau_{\CC,\DD}\boxtimes\id_{\EE})\quad,
\end{flalign}
\end{subequations}
for all $\CC,\DD,\EE\in \dgCat_R^{[-1,0],\mathrm{ps}}$.
Moreover, the symmetric braiding $\tau$ is a $\2Cat^{\mathrm{ps}}$-enriched natural 
transformation, which in particular implies that it is strictly natural with respect to 
$\Ch_R^{[-1,0]}$-enriched functors $F : \CC\to \CC^\prime$, i.e.\
\begin{flalign}
\label{eqn:tauChCat-natural}
(\id_\DD\boxtimes F)\ast\tau_{\CC,\DD}=\tau_{\CC^\prime,\DD}\ast(F\boxtimes \id_\DD)\quad,
\end{flalign}
for all $\DD\in \dgCat_R^{[-1,0],\mathrm{ps}}$.
\sk

To conclude this subsection, we provide a convenient
definition of braided monoidal category objects
in the symmetric monoidal $3$-category $\dgCat_R^{[-1,0],\mathrm{ps}}$.
Such objects can be defined with various levels of weakness since
our ambient category $\dgCat_R^{[-1,0],\mathrm{ps}}$ is a $3$-category.
For the purpose of our first-order deformation quantizations in 
Section \ref{sec:2braidings}, it suffices to consider the following semi-strict 
variant.\footnote{A weaker variant of braided monoidal category objects in
$\dgCat_R^{[-1,0],\mathrm{ps}}$, allowing for non-trivial pentagonator and hexagonator
modifications, however seems to be required for the higher-order deformations 
explored in Subsection \ref{subsec:higherorders}.}
\begin{defi}\label{def:2TermBraidedCat}
A (semi-strict) \textit{braided monoidal category object in $\dgCat_R^{[-1,0],\mathrm{ps}}$} 
is a tuple $(\CC,\otimes,1,\alpha,\lambda,\rho,\gamma)$
consisting of the following data:
\begin{itemize}
\item[(i)] a $\Ch_R^{[-1,0]}$-enriched category $\CC$,

\item[(ii)] a $\Ch_R^{[-1,0]}$-enriched functor $\otimes : \CC\boxtimes\CC\to \CC$ (called monoidal product),

\item[(iii)] a $\Ch_R^{[-1,0]}$-enriched functor $1 : \mathbf{R}\to \CC$ (called monoidal unit),

\item[(iv)] invertible $\Ch_R^{[-1,0]}$-enriched pseudo-natural transformations 
$\alpha : \otimes\ast (\otimes\boxtimes \id_\CC) \Rightarrow \otimes\ast (\id_\CC\boxtimes \otimes)$
(called associator), $\lambda : \otimes \ast(1\boxtimes\id_\CC) \Rightarrow\id_\CC$ (called left unitor)
and $\rho: \otimes \ast(\id_\CC\boxtimes 1)\Rightarrow\id_\CC$ (called right unitor),

\item[(v)] an invertible $\Ch_R^{[-1,0]}$-enriched pseudo-natural transformation
$\gamma : \otimes \Rightarrow\otimes\ast \tau_{\CC,\CC}$ (called braiding).
\end{itemize}
These data have to satisfy the usual coherence conditions for braided monoidal categories
internal to the $3$-category $\dgCat_R^{[-1,0],\mathrm{ps}}$ strictly,
i.e.\ the pentagon, triangle and hexagon diagrams commute strictly 
up to equality of $\Ch_R^{[-1,0]}$-enriched pseudo-natural 
transformations.
\sk

A (semi-strict) \textit{symmetric monoidal category object in $\dgCat_R^{[-1,0],\mathrm{ps}}$} 
is a braided monoidal category object $(\CC,\otimes,1,\alpha,\lambda,\rho,\gamma)$
with the property that the braiding $\gamma$ squares strictly to the identity,
i.e.\ the $\circ$-composition of
\begin{flalign}
\xymatrix@C=3.5em{
\otimes \ar@{=>}[r]^-{\gamma} ~&~\otimes\ast\tau_{\CC,\CC} \ar@{=>}[r]^-{\gamma\ast\Id_{\tau_{\CC,\CC}}}
~&~\otimes \ast \tau_{\CC,\CC}\ast \tau_{\CC,\CC} \,=\, \otimes
}
\end{flalign}
is equal to the identity $\Ch_R^{[-1,0]}$-enriched pseudo-natural transformation $\Id_\otimes$.
\end{defi}

\subsection{Homotopy \texorpdfstring{$2$}{2}-categories of symmetric monoidal dg-categories}
\label{app:Ho2Cat}
Let $R$ be a commutative $\bbK$-algebra and denote by $\dgCat_R$ the $2$-category
of $\Ch_R$-enriched categories (dg-categories), $\Ch_R$-enriched functors (dg-functors)
and $\Ch_R$-enriched strict natural transformations (dg-natural transformations).
We shall regard $\dgCat_R$ as a $3$-category (with trivial $3$-morphisms) 
in the sense of Definition \ref{def:3Cat}.
The lax symmetric monoidal truncation functor  $\tau^{[-1,0]}: \Ch_R\to \Ch_R^{[-1,0]}$
from Lemma \ref{lem:laxtruncation} induces a change of base $3$-functor
\begin{flalign}\label{eqn:homotopy2category}
\mathsf{Ho}_2\,:\, \dgCat_R~\longrightarrow~\dgCat_R^{[-1,0]}\,\subseteq\,\dgCat_R^{[-1,0],\mathrm{ps}}\quad,
\end{flalign}
which sends dg-natural transformations to strict
$\Ch_R^{[-1,0]}$-enriched pseudo-natural transformations.
Observe that, at the level of objects, this $3$-functor assigns to a dg-category $\CC\in \dgCat_R$, 
with objects $\CC_0$ and morphism cochain complexes $\CC(c,c^\prime)\in\Ch_R$, 
its homotopy $2$-category $\mathsf{Ho}_2(\CC)\in \dgCat_R^{[-1,0],\mathrm{ps}}$, 
i.e.\ the $\Ch_R^{[-1,0]}$-enriched category with the same objects $\CC_0$ and morphisms
given by the truncated cochain complexes $\tau^{[-1,0]}\big(\CC(c,c^\prime)\big)\in\Ch_R^{[-1,0]}$.
\sk

Since the enrichment category $\Ch_R$ of cochain complexes of $R$-modules is symmetric monoidal
with respect to the relative tensor product $\otimes_R$, there exists an induced
symmetric monoidal structure on $\dgCat_R$, see e.g.\ \cite[Section 1.4]{Kelly}.
The $3$-functor \eqref{eqn:homotopy2category} assigning homotopy $2$-categories
is lax symmetric monoidal with the lax structure 
$\mathsf{Ho}_2(\CC)\boxtimes \mathsf{Ho}_2(\DD) \to \mathsf{Ho}_2(\CC\boxtimes \DD)$
induced from the lax structure of the good truncation functor in
Lemma \ref{lem:laxtruncation}, for all $\CC,\DD\in\dgCat_R$.
For later use, we record the following direct consequence of this result.
\begin{cor}
Let $(\CC,\otimes,1,\gamma)$ be a braided (or symmetric) monoidal dg-category.
Then the homotopy $2$-category $\mathsf{Ho}_2(\CC)\in \dgCat_R^{[-1,0],\mathrm{ps}}$
is canonically a braided (or symmetric) monoidal category object in
the symmetric monoidal $3$-category $\dgCat_R^{[-1,0],\mathrm{ps}}$ from Proposition \ref{prop:Chtrunccat}.
\end{cor}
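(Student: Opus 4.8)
The plan is to deduce the statement as an instance of transport of structure along the lax symmetric monoidal change of base $3$-functor $\mathsf{Ho}_2 : \dgCat_R \to \dgCat_R^{[-1,0],\mathrm{ps}}$ from \eqref{eqn:homotopy2category}, whose lax symmetric monoidal structure was established in the paragraph preceding the corollary. Regarding $\dgCat_R$ as a symmetric monoidal $3$-category with trivial $3$-morphisms, a braided monoidal dg-category $(\CC,\otimes,1,\alpha,\lambda,\rho,\gamma)$ is precisely a braided monoidal category object in $\dgCat_R$ in the sense analogous to Definition \ref{def:2TermBraidedCat}, whose associator, unitors and braiding are dg-natural isomorphisms and whose pentagon, triangle and hexagon diagrams commute strictly. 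The goal is thus to produce the corresponding data and strict coherences on $\mathsf{Ho}_2(\CC)$, and the general principle that a lax symmetric monoidal functor sends braided (respectively symmetric) monoidal objects to braided (respectively symmetric) monoidal objects tells us this must work; the content is in checking that it works within the weakly associative framework of $\dgCat_R^{[-1,0],\mathrm{ps}}$.

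First I would write out the transported data. Denoting by $\phi_{\CC,\DD} : \mathsf{Ho}_2(\CC)\boxtimes \mathsf{Ho}_2(\DD)\to \mathsf{Ho}_2(\CC\boxtimes\DD)$ the lax structure $1$-morphisms of $\mathsf{Ho}_2$, which on $\Hom$-objects are the lax comparison maps of the truncation functor from Lemma \ref{lem:laxtruncation}, the tensor product on $\mathsf{Ho}_2(\CC)$ is the $\Ch_R^{[-1,0]}$-enriched functor $\widetilde{\otimes} := \mathsf{Ho}_2(\otimes)\ast \phi_{\CC,\CC}$, and the unit is $\mathsf{Ho}_2(1)\ast\phi_0$ with $\phi_0 : \mathbf{R}\to \mathsf{Ho}_2(\mathbf{R})$ the unit comparison. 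The associator, unitors and braiding are obtained by whiskering the images $\mathsf{Ho}_2(\alpha)$, $\mathsf{Ho}_2(\lambda)$, $\mathsf{Ho}_2(\rho)$ and $\mathsf{Ho}_2(\gamma)$ with suitable iterated comparison maps. The key observation, which resolves the only genuine subtlety, is that although the comparison maps $\phi_{\CC,\DD}$ are \emph{not} invertible, the associativity and unit coherences of the lax monoidal structure of $\tau^{[-1,0]}$ hold as strict equations, so that both iterated tensor products factor as $\mathsf{Ho}_2\big(\otimes\ast(\otimes\boxtimes\id)\big)\ast\phi^{(3)}$ and $\mathsf{Ho}_2\big(\otimes\ast(\id\boxtimes\otimes)\big)\ast\phi^{(3)}$ through one and the same triple comparison map $\phi^{(3)} : \mathsf{Ho}_2(\CC)^{\boxtimes 3}\to \mathsf{Ho}_2(\CC^{\boxtimes 3})$. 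The transported associator is then $\mathsf{Ho}_2(\alpha)\ast\Id_{\phi^{(3)}}$, which is invertible with inverse $\mathsf{Ho}_2(\alpha^{-1})\ast\Id_{\phi^{(3)}}$ because $\mathsf{Ho}_2$ sends the dg-natural isomorphism $\alpha$ to an invertible pseudo-natural transformation; the same reasoning applies to the unitors and the braiding.

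Next I would verify the coherence conditions of Definition \ref{def:2TermBraidedCat}: that the pentagon, triangle and both hexagon diagrams commute strictly, and, in the symmetric case, that the braiding squares strictly to the identity. These identities hold strictly in $\dgCat_R$ by hypothesis, and $\mathsf{Ho}_2$ respects the $\circ$- and $\ast$-compositions and is compatible with the symmetric braidings $\tau$ from \eqref{eqn:tauChCat}, so it maps each coherence equation to the corresponding equation in $\dgCat_R^{[-1,0],\mathrm{ps}}$. The point requiring care is that rearranging the whiskered coherence diagrams in the target must not incur nontrivial pseudo-functor coherences from the weak interchange law of the composition in $\dgCat_R^{[-1,0],\mathrm{ps}}$. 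Here the strictness of the images is decisive: every $2$-morphism occurring in the transported diagrams, namely $\mathsf{Ho}_2(\alpha)$, $\mathsf{Ho}_2(\gamma)$ and their inverses, is a \emph{strict} $\Ch_R^{[-1,0]}$-enriched natural transformation, since $\mathsf{Ho}_2$ sends dg-natural transformations to strict ones. Consequently the pseudo-functor coherences \eqref{eqn:compositioncoherences}, whose components $\eta^\prime_{H(c)}\,\zeta^\prime_{G(c),H(c)}(\eta_c)\,F^\prime(\zeta_c)$ vanish whenever the relevant double-indexed components are zero, are trivial in all these computations, exactly as in the proof of Proposition \ref{prop:hexagoninf}. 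Hence the interchange law holds strictly and the coherence equations transport verbatim; in the symmetric case $(\gamma\ast\Id)\circ\gamma=\Id_\otimes$ likewise transports since $\mathsf{Ho}_2(\gamma^2)=\mathsf{Ho}_2(\Id)=\Id$.

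The main obstacle is precisely the tension just discussed: the ambient $3$-category $\dgCat_R^{[-1,0],\mathrm{ps}}$ has only pseudo-functorial composition and the comparison maps $\phi_{\CC,\DD}$ are not isomorphisms, so a priori neither the invertibility of the transported associator, unitors and braiding nor the strict commutativity of the coherence diagrams is automatic. Both difficulties dissolve once one records that (i) the non-invertible comparison maps enter only as a common fixed tail $\phi^{(n)}$ that is whiskered by the \emph{invertible} images of $\alpha,\lambda,\rho,\gamma$, and (ii) all transported structure cells are strictly natural, so the weak interchange law of the target reduces to a strict one. Granting these two points, the verification is routine bookkeeping, and the resulting structure is manifestly canonical, being functorially induced by $\mathsf{Ho}_2$ with no arbitrary choices.
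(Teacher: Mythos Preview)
Your proposal is correct and follows the same approach implicit in the paper, which states the corollary without proof as a ``direct consequence'' of the lax symmetric monoidality of $\mathsf{Ho}_2$ established just before it. Your elaboration makes explicit precisely the two points that justify this terse claim: that the non-invertible comparison maps $\phi$ enter only as common tails whiskered by invertible images, and that the transported structure cells are strictly natural (cf.\ Remark~\ref{rem:Hostrict}) so the pseudo-functor coherences \eqref{eqn:compositioncoherences} vanish and the interchange law holds strictly.
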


\begin{rem}\label{rem:Hostrict}
Note that the braided (or symmetric) monoidal category objects in $\dgCat_R^{[-1,0],\mathrm{ps}}$ 
which are obtained from this construction always have strictly natural
associators, unitors and braidings. Our symmetric monoidal 
$3$-category $\dgCat_R^{[-1,0],\mathrm{ps}}$ from Proposition \ref{prop:Chtrunccat}
is designed to provide more flexibility through the concept 
of enriched pseudo-natural transformations, see also Definition 
\ref{def:2TermBraidedCat}, which will be relevant 
for our deformation constructions in Section \ref{sec:2braidings}.
\end{rem}



\begin{thebibliography}{10}

\bibitem[BC04]{BaezCrans}
J.~C.~Baez and A.~S.~Crans,
``Higher-dimensional algebra VI: Lie $2$-algebras,''
Theory Appl.\ Categ.\ \textbf{12}, 492--538 (2004)
[arXiv:math/0307263 [math.QA]].


\bibitem[BSZ13]{Zhu}
C.~Bai, Y.~Sheng and C.~Zhu,
``Lie $2$-bialgebras,''
Comm.\ Math.\ Phys.\ \textbf{320}, 149--172 (2013)
[arXiv:1109.1344 [math-ph]].


\bibitem[BV15]{Voronov}
D.~Bashkirov and A.~A.~Voronov,
``$r_\infty$-matrices, triangular $L_\infty$-bialgebras and quantum $\infty$-groups,''
in: P.~Kielanowski, P.~Bieliavsky, A.~Odzijewicz, M.~Schlichenmaier and T.~Voronov (eds.),
{\it Geometric Methods in Physics}, 
Trends in Mathematics, Birkh\"auser, Cham (2015)
[arXiv:1412.2413 [math.QA]].


\bibitem[BRW23]{Bordemann}
M.~Bordemann, A.~Rivezzi and T.~Weigel,
``A gentle introduction to Drinfel'd associators,''
\textit{to appear in Reviews in Mathematical Physics}
[arXiv:2304.07012 [math.QA]].


\bibitem[CPTVV17]{CPTVV}
D.~Calaque, T.~Pantev, B.~To{\"e}n, M.~Vaqui{\'e} and G.~Vezzosi,
``Shifted Poisson structures and deformation quantization,''
J.\ Topol.\ \textbf{10}, no.\ 2, 483--584 (2017)
[arXiv:1506.03699 [math.AG]].


\bibitem[Car93]{Cartier}
P.~Cartier,
``Construction combinatoire des invariants de Vassiliev-Kontsevich des n\oe uds,''
C.\ R.\ Acad.\ Sci.\ Paris S{\'e}r.\ I Math.\ \textbf{316}, no.\ 11, 1205--1210 (1993).


\bibitem[Che24]{ChenPhD}
H.~Chen,
{\it Hopf $2$-algebras: Homotopy higher symmetries in physics},
PhD thesis, University of Waterloo (2024).
\url{http://hdl.handle.net/10012/20681}


\bibitem[CG23]{Girelli}
H.~Chen and F.~Girelli,
``Categorical quantum groups and braided monoidal $2$-categories,''
arXiv:2304.07398 [math.QA].


\bibitem[CL24]{Chen}
H.~Chen and J.~Liniado,
``Higher gauge theory and integrability,''
Phys.\ Rev.\ D \textbf{110}, no.\ 8, 086017 (2024)
[arXiv:2405.18625 [hep-th]].


\bibitem[CR08]{ChuangRouquier}
J.~Chuang and R.~Rouquier, 
``Derived equivalences for symmetric groups and $\mathfrak{sl}_2$-categorification,'' 
Ann.\ of\ Math.\ \textbf{167}, no.\ 1, 245--298 (2008) 
[arXiv:0407205 [math.RT]].


\bibitem[CFM12]{Joao1}
L.~S.~Cirio and J.~Faria Martins,
``Categorifying the Knizhnik-Zamolodchikov connection,''
Differential Geom.\ Appl.\ \textbf{30}, no.\ 3, 238--261 (2012)
[arXiv:1106.0042 [hep-th]].


\bibitem[CFM15]{Joao}
L.~S.~Cirio and J.~Faria Martins,
``Infinitesimal $2$-braidings and differential crossed modules,''
Adv.\ Math.\ \textbf{277}, 426--491 (2015)
[arXiv:1309.4070 [math.CT]].


\bibitem[CFM17]{Joao2}
L.~S.~Cirio and J.~Faria Martins,
``Categorifying the $\mathfrak{sl}(2, \mathbb{C})$ Knizhnik-Zamolodchikov 
connection via an infinitesimal $2$-Yang-Baxter operator in the string Lie-$2$-algebra,''
Adv.\ Theor.\ Math.\ Phys.\ \textbf{21}, 147--229 (2017)
[arXiv:1207.1132 [hep-th]].


\bibitem[CF94]{CraneFrenkel}
L.~Crane and I.~B.~Frenkel, 
``Four-dimensional topological quantum field theory, Hopf categories, and the canonical bases,'' 
J.\ Math.\ Phys.\ \textbf{35}, no.\ 10, 5136--5154 (1994) 
[arXiv:9405183 [hep-th]]. 


\bibitem[Dri90]{Drinfeld90} 
V.~G.~Drinfeld, 
``On quasitriangular quasi-Hopf algebras and on a group that is closely 
connected with ${\rm Gal}(\overline{\mathbb{Q}}/{\mathbb{Q}})$,''
Algebra i Analiz \textbf{2}, no.\ 4, 149--181 (1990);
Leningrad Math.\ J.\ \textbf{2}, no.\ 4, 829--860 (1991).


\bibitem[ES02]{EtingofSchiffmann}
P.~Etingof and O.~Schiffmann, 
{\it Lectures on Quantum Groups}, 
Lectures in Mathematical Physics, International Press, Somerville, MA (2002).


\bibitem[GLST20]{Koszul}
A.~Guan, A.~Lazarev, Y.~Sheng and R.~Tang,
``Review of deformation theory II: a homotopical approach,''
Adv.\ Math.\ (China) \textbf{49}, no.\ 3, 278--298  (2020)
[arXiv:1912.04028 [math.AT]].


\bibitem[JY21]{Yau}
N.~Johnson and D.~Yau,
{\it $2$-dimensional Categories},
Oxford University Press, Oxford (2021)
[arXiv:2002.06055 [math.CT]].


\bibitem[KKMP24]{KKMP}
E.~Karlsson, C.~Keller, L.~M\"uller and J.~Pulmann,
``Deformation quantization via categorical factorization homology,''
arXiv:2410.12516 [math.QA].


\bibitem[Kas95]{Kassel}
C.~Kassel,
{\it Quantum Groups},
Grad.\ Texts in Math.\ \textbf{155},
Springer-Verlag, New York (1995).


\bibitem[Kel82]{Kelly}
G.~M.~Kelly,
{\it Basic concepts of enriched category theory},
London Math.\ Soc.\ Lecture Note Ser.\ \textbf{64},
Cambridge University Press, Cambridge-New York (1982).
[Reprints in Theory and Applications of Categories \textbf{10}, 1--136 (2005).]


\bibitem[KL09]{KhovanovLauda}
M.~Khovanov and A.~D.~Lauda, 
``A diagrammatic approach to categorification of quantum groups. I,'' 
Represent.\ Theory \textbf{13}, 309--347 (2009) 
[arXiv:0803.4121 [math.QA]].


\bibitem[KS24]{Schnitzer}
A.~Kraft and J.~Schnitzer,
``An introduction to $L_\infty$-algebras and their homotopy theory for the working mathematician,''
Rev.\ Math.\ Phys.\ \textbf{36}, no.\ 01, 2330006 (2024)
[arXiv:2207.01861 [math.QA]].


\bibitem[LurX]{Lurie}
J.~Lurie,
{\it Derived algebraic geometry X: Formal moduli problems}.
\url{http://www.math.harvard.edu/~lurie/papers/DAG-X.pdf}.


\bibitem[Maj95]{Majid}
S.~Majid,
{\it Foundations of Quantum Group Theory},
Cambridge University Press, Cambridge (1995). 


\bibitem[Maj12]{Majid2}
S.~Majid,
``Strict quantum $2$-groups,''
arXiv:1208.6265 [math.QA].


\bibitem[PTVV13]{PTVV}
T.~Pantev, B.~To{\"e}n, M.~Vaqui{\'e} and G.~Vezzosi,
``Shifted symplectic structures,''
Publ.\ Math.\ Inst.\ Hautes {\'E}tudes Sci.\ \textbf{117}, 271--328 (2013)
[arXiv:1111.3209 [math.AG]].


\bibitem[Pri17]{Pridham}
J.~P.~Pridham,
``Shifted Poisson and symplectic structures on derived $N$-stacks,''
J.\ Topol.\ \textbf{10}, no.\ 1, 178--210 (2017)
[arXiv:1504.01940 [math.AG]].


\bibitem[Pri18]{PridhamOutline}
J.~P.~Pridham,
``An outline of shifted Poisson structures and deformation quantisation in derived differential geometry,''
arXiv:1804.07622 [math.DG].


\bibitem[Pri24]{PridhamNotes}
J.~P.~Pridham,
``A note on {\'e}tale atlases for Artin stacks 
and Lie groupoids, Poisson structures and quantisation,''
J.\ Geom.\ Phys.\ \textbf{203}, 105266  (2024)
[arXiv:1905.09255 [math.AG]].


\bibitem[Rie14]{Riehl}
E.~Riehl,
{\it Categorical Homotopy Theory},
New Math.\ Monogr.\ \textbf{24},
Cambridge University Press, Cambridge (2014).


\bibitem[RW15]{Wagemann}
S.~Rivi{\`e}re and F.~Wagemann,
``On the string Lie algebra,''
Algebr.\ Represent.\ Theory \textbf{18}, no.\ 4, 1071--1099 (2015)
[arXiv:1407.6806 [math.QA]].


\bibitem[Rou08]{Rouquier}
R.~Rouquier, 
``$2$-Kac-Moody algebras,''
arXiv:0812.502 [math.RT].


\bibitem[Saf21]{Safronov}
P.~Safronov,
``Poisson-Lie structures as shifted Poisson structures,''
Adv.\ Math.\ \textbf{381}, Paper No.\ 107633 (2021)
[arXiv:1706.02623 [math.AG]].


\bibitem[SV24]{Schenkel}
A.~Schenkel and B.~Vicedo,
``5d 2-Chern-Simons theory and 3d integrable field theories,''
Commun.\ Math.\ Phys.\ \textbf{405}, no.\ 12, 293 (2024)
[arXiv:2405.08083 [hep-th]].

\end{thebibliography}
\end{document}